\newtheorem{thm}{Theorem}
\newtheorem{setup}{Setup}
\newtheorem{conj}{Conjecture}
\newtheorem{lem}{Lemma}
\newtheorem{prop}{Proposition}
\newtheorem{rem}{Remark}
\numberwithin{equation}{section}
\begin{document}

\title{Collapsing limits of the K\"ahler-Ricci flow and the continuity method}

\author{Yashan Zhang}
\address{Beijing International Center for Mathematical Research, Peking University, Beijing 100871, China}
\email{yashanzh@pku.edu.cn}
\thanks{The author is partially supported by the Project MYRG2015-00235-FST of the University of Macau.}

\begin{abstract}
We consider the K\"ahler-Ricci flow on certain Calabi-Yau fibration, which is a Calabi-Yau fibration with one dimensional base or a product of two Calabi-Yau fibrations with one dimensional bases. Assume the K\"ahler-Ricci flow on total space admits a uniform lower bound for Ricci curvature, then the flow converges in Gromov-Hausdorff topology to the metric completion of the regular part of generalized K\"ahler-Einstein current on the base, which is a compact length metric space homeomorphic to the base. The analogue results for the continuity method on such Calabi-Yau fibrations are also obtained. Moreover, we show the continuity method starting from a suitable K\"ahler metric on the total space of a Fano fibration with one dimensional base converges in Gromov-Hausdorff topology to a compact metric on the base. During the proof, we show the metric completion of the regular part of a generalized K\"ahler-Einstein current on a Riemann surface is compact.
\end{abstract}

\maketitle

\section{Introduction}

\subsection{Setups}
In this paper, we discuss collapsing limits of the K\"ahler-Ricci flow and the continuity method. Let's begin by some background and motivation for our results. \par The general setup in this paper is as follows.
\begin{setup}\label{setup0}
Let $f:X\to Z$ be a holomorphic map between two compact K\"ahler manifolds with image $Y:=f(X)\subset Z$ an irreducible normal subvariety of $Z$ and $0<k:=dim(Y)<n:=dim(X)$ and $f:X\to Y$ is of connected fibers. Assume there exists a family of K\"ahler metrics $\omega(t)$, $t\in[0,T)$ and $0<T\le\infty$, on $X$ with the property that, as $t\to T^-$,
\begin{equation}\label{semiample0}
[\omega(t)]\to f^*[\chi]
\end{equation}
in $H^{1,1}(X,\mathbb R)$ for some K\"ahler class $[\chi]$ on $Z$ with a K\"ahler metric representative $\chi$.
\end{setup}

Then our main goal is to study the limit of $\omega(t)$ as $t\to T^-$.
\par For later convenience, we also set

\begin{setup}\label{setup1}
Assume Setup \ref{setup0} and additionally $[\chi]$ is rational, i.e. $[\chi]=2\pi c_1(L)$ for some ample $\mathbb Q$-line bundle $L$ on $Z$.
\end{setup}

\begin{setup}\label{setup2}
Assume Setup \ref{setup0} and additionally $Y$ is smooth and the set of critical values of $f$ has simple normal crossing support.
\end{setup}

Setup \ref{setup1} is the most natural case, in view of semi-ample fibration theorem \cite{La}. We will discuss a special case of Setup \ref{setup2}, see Setup \ref{setup3} later (also see Remark \ref{ending}).
\par In this paper, $\omega(t)$ will satisfy the K\"ahler-Ricci flow or the continuity method.

\subsection{The K\"ahler-Ricci flow}\label{KRFsubsect} Let's first consider the K\"ahler-Ricci flow case. Assume $\omega(t)_{t\in[0,\infty)}$ is a long time solution to the K\"ahler-Ricci flow

\begin{equation}\label{KRF0}
\left\{
\begin{aligned}
\partial_t\omega(t)&=-Ric(\omega(t))-\omega(t)\\
\omega(0)&=\omega_0,
\end{aligned}
\right.
\end{equation}
where $\omega_0$ is an arbitrary K\"ahler metric on $X$. We know the K\"ahler class along the K\"ahler-Ricci flow \eqref{KRF0} satisfies
$$[\omega(t)]=e^{-t}[\omega_0]+2\pi(1-e^{-t})c_1(K_X)$$
and hence in this case the condition \eqref{semiample0} in Setup \ref{setup0} means
\begin{equation}\label{semiample1}
2\pi c_1(K_X)=f^*[\chi].
\end{equation}
Note that condition \eqref{semiample1} implies the generic fiber of $f:X\to Y$ is a smooth Calabi-Yau manifold and so we may call it a Calabi-Yau fibration.
Following Song and Tian \cite{ST06,ST12}, we now recall a construction of a class of positive current, i.e. the generalized K\"ahler-Einstein current on $Y$ in the class $[\chi]$ (here and in the followings, we use the same notation $\chi$ to denote the restriction of $\chi$ to $Y$). Fix a smooth positive volume form $\Omega$ on $X$ with $\sqrt{-1}\partial\bar\partial\log\Omega=f^*\chi$. Denote $S\subset Y$ be the singular set of $Y$ together with the critical values of $f$ and $X_{reg}:=f^{-1}(Y\setminus S)$. Then by \cite[Lemma 3.1]{ST06} we find a closed real $(1,1)$-form $\omega_{SRF}$ on $X_{reg}$ such that, restricting to a smooth fiber $X_y$ for $y\in Y\setminus S$, $\omega_{SRF}|_{X_y}$ is the unique Ricci-flat K\"ahler metric in the class $[\omega_0|_{X_y}]$. Define $F:=\frac{\Omega}{\binom{n}{k}\omega_{SRF}^{n-k}\wedge f^*\chi^k}$, which is constant along every smooth fiber and hence is also a positive smooth function on $Y\setminus S$. Moreover, by \cite[Lemma 3.3, Proposition 3.2]{ST12} $F$ satisfies
\begin{equation}\label{equation}
F=\frac{1}{V_0}\frac{f_*\Omega}{\binom{n}{k}\chi^k}
\end{equation}
on $Y\setminus S$, where $V_0:=\int_{X_y}(\omega_0|_{X_y})^{n-k}$, $y\in Y\setminus S$, is a positive constant and
\begin{equation}
0<\delta\le F\in L^{1+\epsilon}(Y,\chi^k)
\end{equation}
on $Y$, where $\delta$ and $\epsilon$ are two positive constants. Then Song and Tian \cite{ST06,ST12} consider the following complex Monge-Amp\`{e}re equation on $Y$:
\begin{equation}\label{equation0}
(\chi+\sqrt{-1}\partial\bar\partial\psi)^k=e^{\psi}F\chi^k.
\end{equation}
Thanks to \cite{EGZ,ST12} (building on \cite{Au,Y,Ko1}), \eqref{equation0} admits a unique solution $\psi\in PSH(Y,\chi)\cap L^\infty(Y)\cap C^\infty(Y\setminus S)$ (strictly speaking, \cite{ST12} proved the case that $[\chi]$ is rational on $Z$; however, noting \cite[Theorem 3.4, Lemma 3.5]{DP04}, their arguments also apply without assuming rationality of $[\chi]$). Set $\omega_Y:=\chi+\sqrt{-1}\partial\bar\partial\psi$, which is a positive $(1,1)$-current on $Y$ and a smooth K\"ahler metric on $Y\setminus S$. We call $\omega_Y$ the generalized K\"ahler-Einstein current on $Y$, since it satisfies a generalized K\"ahler-Einstein equation on $Y\setminus S$, see \cite{ST06,ST12}. We also remark that $\omega_Y$ does not depend on the constant factor $V_0$ (and$\binom{n}{k}$) on the right hand side of \eqref{equation0} and so we may assume $V_0=1$.\\

In the fundamental works of Song and Tian \cite{ST06,ST12}, it is shown in Setup \ref{setup1} that $\omega(t)_{t\in[0,\infty)}$, the solution to the K\"ahler-Ricci flow \eqref{KRF0}, converges to $f^*\omega_Y$ as currents on $X$. Precisely, \cite{ST06,ST12} proved the $L^1$-convergence for K\"ahler potentials, where the rationality of $[\chi]$ is needed in the argument (we mention that it is very natural to assume the rationality of $[\chi]$, in view of semi-ample fibration theorem), see \cite[Propositions 5.3, 5.4]{ST12}. This convergence has been improved to some stronger topology by several works, see \cite{FZ,TWY} and reference therein. In particular, assuming the Setup \ref{setup1}, it is proved in \cite[Theorem 1.2]{TWY} that for any $K'\subset\subset X_{reg}$, $\omega(t)\to f^*\omega_Y$ in $C^0(K',\omega_0)$-topology.

\par Here we remark an observation which will be used later.
\begin{prop}\label{observation2}\cite{ST06,ST12,TWY}
Assume $\omega(t)_{t\in[0,\infty)}$, the solution to the K\"ahler-Ricci flow \eqref{KRF0} on $X$, satisfies Setup \ref{setup2}. Then, as $t\to\infty$, $\omega(t)\to f^*\omega_Y$ as currents on $X$ and for any $K'\subset\subset X_{reg}$, $\omega(t)\to f^*\omega_Y$ in $C^0(K',\omega_0)$-topology.
\end{prop}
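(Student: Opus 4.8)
The plan is to revisit the convergence statements of Song--Tian \cite{ST06,ST12} and Tian--Wang--Yin \cite{TWY}, which are proved under Setup \ref{setup1}, and to verify that the rationality of $[\chi]$ enters those arguments only through pluripotential-theoretic inputs that remain valid under Setup \ref{setup2}. First I would set up the usual scalar reduction: put $\hat\omega_t:=e^{-t}\omega_0+(1-e^{-t})f^*\chi$, which is a K\"ahler metric for every $t\ge0$ and represents $[\omega(t)]$ by \eqref{semiample1}, write $\omega(t)=\hat\omega_t+\sqrt{-1}\partial\bar\partial\varphi(t)$, and translate \eqref{KRF0} into a parabolic complex Monge--Amp\`ere equation of the form
\begin{equation*}
\partial_t\varphi=\log\frac{e^{(n-k)t}\,(\hat\omega_t+\sqrt{-1}\partial\bar\partial\varphi)^n}{\Omega}-\varphi,\qquad\varphi(0)=0,
\end{equation*}
with $\Omega$ the volume form fixed in Section \ref{KRFsubsect} (up to a harmless normalising constant). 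Then the asserted current convergence $\omega(t)\to f^*\omega_Y$ reduces, in a routine way, to the $L^1(X)$-convergence $\varphi(t)\to f^*\psi$ together with a uniform bound $\sup_X|\varphi(t)|\le C$, and the $C^0(K',\omega_0)$-convergence on $K'\subset\subset X_{reg}$ is a local refinement over $Y\setminus S$.

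The estimates needed, and the reason rationality can be dropped, are the following. (i) The uniform bounds $\sup_X|\varphi(t)|\le C$ and $\sup_X|\partial_t\varphi(t)|\le C$: the first is a Ko\l odziej / Eyssidieux--Guedj--Zeriahi type $L^\infty$-estimate for the degenerating family $\hat\omega_t$, depending only on the semipositivity of $f^*\chi$, the fibrewise volume normalisation $\int_{X_y}(\omega_0|_{X_y})^{n-k}=V_0>0$, and the bound $0<\delta\le F\in L^{1+\epsilon}(Y,\chi^k)$; following the remark after \eqref{equation0} one invokes \cite[Theorem 3.4, Lemma 3.5]{DP04} to run this for the possibly irrational class $[\chi]$, and the bound on $\partial_t\varphi$ then follows from the maximum principle computation of Song--Tian, which is insensitive to rationality. (ii) Identification of the limit: the estimates in (i) force every $L^1$-subsequential limit of $\varphi(t)$ to descend to a bounded $\chi$-plurisubharmonic solution of \eqref{equation0} on $Y$, which by uniqueness of $\psi$ (again available without rationality, as already noted in Section \ref{KRFsubsect}) equals $f^*\psi$; hence the full convergence $\varphi(t)\to f^*\psi$. (iii) Local higher-order estimates on $K'\subset\subset X_{reg}$: these are purely local over $Y\setminus S$, built from the semi-flat, fibrewise Ricci-flat form $\omega_{SRF}$, and the argument of \cite[Theorem 1.2]{TWY} applies unchanged; here the smoothness of $Y$ and the simple normal crossing condition on the critical values in Setup \ref{setup2} provide exactly the local structure near $f^{-1}(S)$ needed to carry out the global estimates in (i)--(ii), replacing the semi-ample fibration structure that rationality supplied in \cite{ST12}.

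The step I expect to be the main obstacle is making (i) fully rigorous without rationality, since this is precisely where \cite{ST12} used the section ring of $K_X$. The cleanest route is the direct pluripotential approach through \cite{DP04}; alternatively one can perturb the reference data, replacing $f^*\chi$ by $f^*\chi_\epsilon$ for rational K\"ahler classes $[\chi_\epsilon]\to[\chi]$ on $Z$ and $F$ by a corresponding density, solving the associated (now rational) parabolic Monge--Amp\`ere problems, applying \cite{ST12,TWY}, and passing to the limit $\epsilon\to0$ with estimates uniform in $\epsilon$. Once (i)--(iii) are in place, the proof is completed exactly as in the rational case treated by \cite{ST06,ST12,TWY}.
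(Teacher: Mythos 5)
Your proposal misidentifies where rationality actually enters the Song--Tian argument, and this affects the logic of your plan.  The uniform $L^\infty$ bound on $\varphi$ and the bound on $\dot\varphi$ (your item (i)) do \emph{not} require rationality of $[\chi]$; they are stated in the paper's Proposition~\ref{facts} under Setup~\ref{setup0} alone, and come from Ko\l odziej-type estimates plus the parabolic maximum principle.  Where rationality is actually used in \cite{ST12} is in Propositions~5.3--5.4 there, i.e.\ in proving the $L^1$- and local uniform convergence of $\varphi(t)$ away from $f^{-1}(S)$, and the mechanism is geometric: one uses the fact that $[\chi]=2\pi c_1(L)$ for an ample $\mathbb Q$-line bundle to produce, via Kodaira's lemma, a holomorphic section vanishing along the bad locus, whose logarithm serves as a barrier in the maximum principle argument.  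This is precisely the step that the paper's brief proof of Proposition~\ref{observation2} addresses, and the paper's route is different from yours: under Setup~\ref{setup2} the singular set $S$ has simple normal crossing support, so one can replace the algebro-geometric section by the local defining functions of the SNC components, modify the maximum principle estimate of \cite{ST12} accordingly, and recover the analogue of \cite[Proposition~5.4]{ST12}; the $C^0$ metric convergence on $K'\subset\subset X_{reg}$ then follows exactly as in \cite[Theorem~1.2]{TWY}.

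Your item (ii) is where the real content of the proposition lies, and as written it has a gap.  Knowing $\sup_X|\varphi|+|\dot\varphi|\le C$ is not enough to conclude that every $L^1$-subsequential limit of $\varphi(t)$ descends to a solution of \eqref{equation0}: passing the degenerate Monge--Amp\`ere equation to the limit, and showing the limit is a pullback from $Y$ solving \eqref{equation0}, is exactly what requires the barrier estimate near $f^{-1}(S)$ (or, in the rational case, the section from Kodaira's lemma).  You cannot invoke uniqueness of $\psi$ until you have shown that the limit solves the equation, so the argument is circular as stated.  Your alternative suggestion of approximating $[\chi]$ by rational K\"ahler classes $[\chi_\epsilon]$ and passing to the limit could in principle work, but only if the resulting estimates are uniform in $\epsilon$; since the Song--Tian barrier degenerates as the section of $mL_\epsilon-E$ changes with $\epsilon$, this uniformity is not obvious and is not what the paper does.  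The intended argument is simply to exploit the SNC hypothesis to build the barrier locally, bypassing rationality entirely.
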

To see Proposition \ref{observation2}, we only need to prove a local uniform convergence away from a proper subvariety on $X$ for K\"ahler potentials, i.e. the analog of \cite[Proposition 5.4]{ST12}. This can be achieved by simply modifying the maximum principle arguments in \cite{ST12} since $S$ has simple normal crossing support. Once we have this fact, the conclusion in Proposition \ref{observation2} follows from the same arguments in \cite[Theorem 1.2]{TWY}. Proposition \ref{observation2} will be applied to Setup \ref{setup3} later.\\

\par There exists a natural conjecture proposed by Song and Tian \cite{ST06,ST12,ST17}.
\begin{conj}\label{conj0}\cite[Conjecture 6.3]{ST17}
Assume Setup \ref{setup0} with condition \eqref{semiample1}.
\begin{itemize}
\item[(1)] $(X_\infty,d_\infty)$, the metric completion of $(Y\setminus S,\omega_Y)$, is a compact metric space homeomorphic to $Y$.
\item[(2)] Assume Setup \ref{setup1} (or Setup \ref{setup2}) with condition \eqref{semiample1} and $\omega(t)_{t\in[0,\infty)}$ is the solution to the K\"ahler-Ricci flow \eqref{KRF0} on $X$. Then, as $t\to\infty$, $(X,\omega(t))\to(X_\infty,d_\infty)$ in Gromov-Hausdorff topology.
\end{itemize}
\end{conj}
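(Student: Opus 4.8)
The plan is to treat the two parts of Conjecture \ref{conj0} in turn, the first being essentially a statement about the singular metric $\omega_Y$ alone and the second coupling it to the flow. For part (1) I would work from the analytic description $\omega_Y=\chi+\sqrt{-1}\partial\bar\partial\psi$ with $\psi\in PSH(Y,\chi)\cap L^\infty(Y)\cap C^\infty(Y\setminus S)$ solving \eqref{equation0}, where $0<\delta\le F\in L^{1+\epsilon}(Y,\chi^k)$. The first task is to produce a uniform diameter bound $\mathrm{diam}(Y\setminus S,\omega_Y)<\infty$ together with a lower volume bound for small $\omega_Y$-balls; after passing to a log resolution of $(Y,S)$ these follow from pluripotential estimates for degenerate complex Monge--Amp\`ere equations of the type in \cite{Ko1,EGZ}, whose only structural input is the $L^{1+\epsilon}$ integrability of the right-hand side. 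These two facts already give that $(X_\infty,d_\infty)$ is a compact length space. For the homeomorphism one should exhibit the natural map $\iota\colon Y\to X_\infty$ extending the isometric embedding $Y\setminus S\hookrightarrow X_\infty$: continuity and surjectivity of $\iota$ amount to showing that the $d_\infty$-ball of radius $r$ about $\iota(p)$, for $p\in S$, corresponds to a neighbourhood of $p$ in $Y$ whose $\omega_Y$-diameter tends to $0$ as $r\to 0$, which is again a consequence of the above estimates; injectivity of $\iota$ — that distinct points of $S$, and a point of $S$ together with a point of $Y\setminus S$, are not identified, and that points of $S$ lie at positive mutual $d_\infty$-distance — is the delicate point, to be handled by a local model computation near each stratum of $S$ comparing $\omega_Y$ with an explicit model metric determined by the boundedness of $\psi$ and the normal-crossing data of $F$. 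A continuous bijection from the compact space $Y$ to the Hausdorff space $X_\infty$ is then a homeomorphism.

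For part (2), assuming Setup \ref{setup1} or \ref{setup2} and letting $\omega(t)$ solve \eqref{KRF0}, the plan is to upgrade the current convergence $\omega(t)\to f^*\omega_Y$ of \cite{ST06,ST12} and the locally uniform convergence $\omega(t)\to f^*\omega_Y$ on $X_{reg}$ (\cite{TWY}, Proposition \ref{observation2}) to Gromov--Hausdorff convergence of $(X,\omega(t))$ to $(X_\infty,d_\infty)$. Concretely one would assemble three quantitative facts: a uniform fibre-collapsing estimate $\sup_{y\in Y}\mathrm{diam}(X_y,\omega(t)|_{X_y})\to 0$ as $t\to\infty$ (away from $S$ this uses that $[\omega(t)|_{X_y}]=e^{-t}[\omega_0|_{X_y}]\to 0$ while the fibre metrics stay uniformly equivalent to $e^{-t}$ times the Ricci-flat representative, and near $S$ a uniform version up to the singular fibres); a uniform diameter bound $\mathrm{diam}(X,\omega(t))\le C$ independent of $t$; and continuity of the limiting distance, i.e. $d_{\omega(t)}(x_1,x_2)\to d_\infty\big(\iota(f(x_1)),\iota(f(x_2))\big)$, which over $X_{reg}$ follows from the locally uniform convergence on $X_{reg}$ together with part (1), once near-minimising paths can be routed away from $f^{-1}(S)$ with negligible loss of length. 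Granting these, the composites $\iota\circ f\colon(X,\omega(t))\to(X_\infty,d_\infty)$ are $\varepsilon_t$-Gromov--Hausdorff approximations with $\varepsilon_t\to 0$, which is exactly the asserted convergence.

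The main obstacle is the pair of \emph{uniform} estimates near and across the singular fibres — the fibre-collapsing estimate and the diameter bound for $(X,\omega(t))$ — carried out with no a priori curvature hypothesis on $\omega(t)$: near $f^{-1}(S)$ the flow can degenerate in ways not detected by the cohomological collapse $[\omega(t)|_{X_y}]=e^{-t}[\omega_0|_{X_y}]$, and the comparison-geometry tools (Bishop--Gromov, the segment inequality) that would normally convert local control into global control are unavailable. A uniform lower Ricci bound along \eqref{KRF0} would resolve much of this — it supplies Gromov pre-compactness, controls the size of $\omega(t)$-balls, and lets one identify the Gromov--Hausdorff limit with $(X_\infty,d_\infty)$ directly — and the base case $\dim Y=1$, where $S$ is a finite set and the local models are completely explicit, is the natural place to test the argument. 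Bringing the near-$S$ analysis under control in full generality is precisely what keeps the conjecture open.
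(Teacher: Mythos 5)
There is a genuine gap here, and it starts with the status of the statement itself: Conjecture \ref{conj0} is stated in the paper as an open conjecture of Song--Tian, and the paper does not prove it. What the paper proves are special cases: part (1) only when $dim(Y)=1$ (Theorem \ref{theorem1}, via Theorem \ref{prop3.1}), and part (2) only when $dim(Y)=1$ \emph{and} under the additional hypothesis of a uniform lower bound on the Ricci curvature along the flow (Theorem \ref{result0}), plus products of such fibrations (Theorem \ref{result}). Your proposal is a programme rather than a proof, and its crucial steps are exactly the open parts. In part (1), the step where you extract a diameter bound and a volume non-collapsing bound for $(Y\setminus S,\omega_Y)$ ``from pluripotential estimates of the type in \cite{Ko1,EGZ}, whose only structural input is the $L^{1+\epsilon}$ integrability of the right-hand side'' does not go through: Ko{\l}odziej/EGZ-type estimates give $\|\psi\|_{L^\infty}$, and boundedness of the potential does not control the diameter or the local geometry of $\omega_Y=\chi+\sqrt{-1}\partial\bar\partial\psi$ near $S$. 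This is precisely why part (1) is open for $k\ge2$. The paper's route in the case $k=1$ is different and much more concrete: since $\omega_Y=\frac{1}{nV_0}e^{\psi}f_*\Omega$ on a Riemann surface, the problem reduces to the asymptotics of the fibre integral $f_*\Omega$ near a critical value, which is computed explicitly through a log resolution (Theorem \ref{prop3.1}), yielding the two-sided bound $|s|^{-2(1-\beta)}(-\log|s|)^{N-1}$; compactness and the homeomorphism with $Y$ then follow from integrability of this weight. No analogue of this computation is supplied, or currently available, for higher-dimensional bases, so your ``local model computation near each stratum of $S$'' is a placeholder for the missing idea, not an argument.

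For part (2) the situation is similar. The three quantitative inputs you list (uniform fibre collapsing up to the singular fibres, a uniform diameter bound for $(X,\omega(t))$, and convergence of distances) are exactly what is not known without curvature assumptions; as you yourself note, the comparison-geometry tools needed to localize near $f^{-1}(S)$ are unavailable. The paper's Theorem \ref{result0} obtains the diameter bound and the smallness of $X\setminus X_{\epsilon^L}$ (Lemmas \ref{geod4}, \ref{geod5}) precisely by \emph{assuming} $Ric(\omega(t))\ge -C\omega(t)$ and invoking Bishop--Gromov volume comparison against the volume estimate $\omega(t)^n\sim e^{-(n-1)t}\Omega$ of Proposition \ref{facts}; it never proves these bounds unconditionally. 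So your proposal correctly diagnoses where the difficulty lies and even matches the paper's conditional strategy in spirit, but it does not prove Conjecture \ref{conj0}: the steps it leaves ``to be handled'' coincide with the content of the conjecture, and the parts it could complete are only the conditional, one-dimensional-base cases that the paper actually establishes by a different (explicit resolution-based) mechanism for part (1).
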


We point out that the metric properties of $(Y\setminus S,\omega_Y)$ should be crucial in obtaining Gromov-Hausdorff convergence for the K\"ahler-Ricci flow. Our first main result deals with item (1) in Conjecture \ref{conj0} when $dim(Y)=1$ (i.e. $Y$ is a compact Riemann surface). Recall that, if $dim(Y)=1$, then $S$, the set of critical points of $f$, is a finite set of isolated points on $Y$. We denote $S=\{s_1,\ldots,s_L\}$. Our first main result can be stated as follows.

\begin{thm}\label{theorem1}
Assume Setup \ref{setup0} with condition \eqref{semiample1} and $dim(Y)=1$. Let $\omega_Y$ be the positive current obtained by \eqref{equation0}. Denote $S=\{s_1,\ldots,s_L\}$ be the critical values of $f$ on $Y$. For a fixed $s_l\in S$, we choose a local chart $(\Delta,s)$ near $s_l$ and identify $\Delta$ with $\{s\in\mathbb{C}||s|\le1\}$ and $s_l$ with $0\in\mathbb{C}$. Denote $\Delta^*=\Delta\setminus\{0\}$. Then, after possibly shrinking $\Delta$, there exist three positive constants $\beta_l\in\mathbb{Q}_{>0}$, $N_l\in\{1,2,\ldots,n\}$ and $C_l\ge1$ such that
\begin{equation}\label{est1.1}
C_l^{-1}|s|^{-2(1-\beta_l)}(-\log|s|)^{N_l-1}\le\frac{\omega_Y}{\sqrt{-1}ds\wedge d\bar s}(s)\le C_l|s|^{-2(1-\beta_l)}(-\log|s|)^{N_l-1}
\end{equation}
holds on $\Delta^*$. Consequently, $(X_\infty,d_\infty)$, the metric completion of $(Y\setminus S,\omega_{Y})$, is a compact length metric space and $X_\infty$ is homeomorphic to $Y$.
\end{thm}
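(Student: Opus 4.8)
The plan is to derive the asymptotic estimate \eqref{est1.1} by a careful analysis of the right-hand side of the Monge-Amp\`ere equation \eqref{equation0} near each critical value $s_l$, and then to use it to establish compactness of the metric completion via a direct length-space argument.

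First I would study the behavior of $F$ near $s_l$. Recall $F = \frac{1}{V_0}\frac{f_*\Omega}{\binom{n}{k}\chi^k}$; when $\dim Y = 1$ this is, up to a smooth positive factor, the pushforward of the smooth volume form $\Omega$ under $f$. The key point is that near an isolated critical value $0\in\Delta$ the fiber $X_0 = f^{-1}(0)$ is a singular fiber, and the degeneration of the fibers is governed by the structure of $f$ over the punctured disk $\Delta^*$. Using a log resolution / semistable reduction type argument (or directly the theory of variation of Hodge structure and the Clemens-Schmid exact sequence, or simply an explicit local normal form $f(z) = z_1^{a_1}\cdots z_m^{a_m}$ after blow-ups), one finds that $f_*\Omega / (\sqrt{-1}\,ds\wedge d\bar s)$ behaves like $|s|^{-2(1-\beta_l)}(-\log|s|)^{N_l-1}$ up to constants, where $\beta_l\in\mathbb Q_{>0}$ is determined by the multiplicities of the components of the singular fiber (the log-canonical-threshold-type invariant) and $N_l\in\{1,\dots,n\}$ is the number of components meeting at the deepest stratum. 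This is the technical heart of the argument; I expect to quote or adapt results from the literature on the asymptotics of period integrals / volume forms on degenerating families (e.g. the work surrounding Song-Tian, or Schmid's nilpotent orbit theorem), since a fully self-contained derivation of the exponents would be lengthy.

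Next, granted the bound $\delta^{-1}|s|^{-2(1-\beta_l)}(-\log|s|)^{N_l-1}\le F\le \delta|s|^{-2(1-\beta_l)}(-\log|s|)^{N_l-1}$ on $\Delta^*$, I would transfer it to $\omega_Y = \chi + \sqrt{-1}\partial\bar\partial\psi$. Since $\psi\in L^\infty(Y)$ is bounded, on $\Delta^*$ the equation \eqref{equation0} reads $\frac{\omega_Y}{\sqrt{-1}\,ds\wedge d\bar s} = e^{\psi} F \cdot \frac{\chi^k}{\sqrt{-1}\,ds\wedge d\bar s}$ (for $k=1$ this is literally $e^\psi F$ times a smooth positive function), and the boundedness of $\psi$ together with the smoothness and positivity of $\chi$ near $s_l$ immediately upgrades the two-sided bound on $F$ to the two-sided bound \eqref{est1.1} on $\omega_Y$, possibly after shrinking $\Delta$ and enlarging $C_l$.

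Finally, for the metric completion: from \eqref{est1.1} the distance in the $\omega_Y$-metric from a point $s$ with $0<|s|<r$ to the puncture is comparable to $\int_{|s|}^{r} \rho^{-(1-\beta_l)}(-\log\rho)^{(N_l-1)/2}\,d\rho$, which is \emph{finite} as $|s|\to 0$ precisely because $\beta_l > 0$ (the power $\rho^{-(1-\beta_l)}$ is integrable near $0$ for $\beta_l>0$, and the logarithmic factor does not destroy integrability). Hence each puncture is at finite $\omega_Y$-distance and adding the $L$ points $s_1,\dots,s_L$ produces a complete metric space; moreover the diameter is finite, so $(X_\infty,d_\infty)$ is compact. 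To identify $X_\infty$ with $Y$ topologically, I would show the identity map $(Y\setminus S,\omega_Y)\hookrightarrow Y$ extends to a continuous bijection $X_\infty\to Y$ (continuity from the finite-distance estimate above, injectivity since distinct punctures are at positive distance, which again follows from \eqref{est1.1} bounding $\omega_Y$ from below near each $s_l$), and then invoke that a continuous bijection from a compact space to a Hausdorff space is a homeomorphism. The fact that $X_\infty$ is a length space follows because $(Y\setminus S,\omega_Y)$ is already a length space (it is a smooth Riemannian surface) and the metric completion of a length space is a length space. The main obstacle, as noted, is the precise determination of the exponents $\beta_l$ and $N_l$ in the asymptotics of $f_*\Omega$; everything downstream is then a fairly routine integration and point-set topology argument.
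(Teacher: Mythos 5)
Your overall strategy matches the paper's: derive the two-sided asymptotics of $f_*\Omega$ near each $s_l$, transfer them to $\omega_Y$ using $\omega_Y = e^{\psi}F\chi$ and the boundedness of $\psi$, and then conclude compactness from integrability of $\rho^{-(1-\beta_l)}(-\log\rho)^{(N_l-1)/2}$ near $\rho=0$. You also correctly anticipate the shape of the exponents ($\beta_l$ a log-canonical-threshold-type quantity, $N_l$ the number of divisor components meeting at the deepest stratum). However, there is a genuine gap at what you yourself call ``the technical heart'': you never actually establish the asymptotics of $f_*\Omega$, and the fallback you offer — quoting results on degenerating period integrals, variation of Hodge structure, or Schmid's nilpotent orbit theorem — does not work here, for a reason the paper flags explicitly: $f_*\Omega$ is in general \emph{not} the $L^2$-metric on the Hodge bundle $f_*K_{X/\Sigma}$, so the results of Schmid, Peters, Boucksom--Jonsson, Eriksson--Freixas--Mourougane cannot be applied directly. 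The paper's actual argument (its Theorem 3, proved in Section 3.1) is a self-contained computation: take a log resolution $\pi:\hat X\to X$ with $\hat X_0 = \sum a_j E_j$ simple normal crossing, use the ramification formula $K_{\hat X} = \pi^*K_X + \sum k_j E_j$ to write $\pi^*\Omega = \prod_j |s_j|_{h_j}^{2k_j}\hat\Omega$, and then evaluate the fiber integral over $\hat X_s$ in local coordinates by the logarithmic change of variables $z^j = e^{u^j/a_j}e^{\sqrt{-1}\theta^j}$, which produces the leading behaviour $|s|^{-2(1-\beta_l)}(-\log|s|)^{N_l-1}$ with $\beta_l = \min_j\frac{k_j+1}{a_j}$ and $N_l$ the maximal number of $E_j$'s attaining this minimum with nonempty intersection. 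Note in particular that $\beta_l$ depends on the discrepancies $k_j$ as well as the multiplicities $a_j$; ``determined by the multiplicities of the components of the singular fiber'' understates this. Your reduction from $F$-asymptotics to $\omega_Y$-asymptotics and the closing point-set/length-space argument are correct and essentially identical to the paper's (which cites its own \cite[Proposition 3.3]{ZyZz1} for the last step), but the missing computation of $f_*\Omega$ is precisely the new content of the theorem, so the proposal as written does not constitute a proof.
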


\begin{rem}
We have to remark that if $dim(X)=2$, i.e. $X$ is a minimal elliptic surface, then the conclusion in Theorem \ref{theorem1} is first proved in Song and Tian \cite[Lemma 3.4]{ST06} and Hein \cite[Section 3.3]{He} by using Kodaira's classification on singular fibers of minimal elliptic surfaces (see e.g. \cite{Be}). In the general case $dim(X)\ge3$, since in general there is no classification on singular fibers, it seems Song-Tian-Hein's method dose't work. In the proof of Theorem \ref{theorem1}, our approach doesn't involve classification on singular fibers and hence is very different from Song and Tian \cite[Lemma 3.4]{ST06} and Hein \cite[Section 3.3]{He}. In particular, our proof also provides an alternative argument for the minimal elliptic surface case, see Section \ref{sectgke} for details.
\end{rem}

Having Theorem \ref{theorem1}, we can check Conjecture \ref{conj0} by assuming a lower bound for Ricci curvature. The following is our next main result.

\begin{thm}\label{result0}
Assume $\omega(t)_{t\in[0,\infty)}$, a solution to the K\"ahler-Ricci flow \eqref{KRF0} on $X$, satisfies Setup \ref{setup0} and $dim(Y)=1$. Assume Ricci curvature of $\omega(t)$ is uniformly bounded from below. Then, as $t\to\infty$, $(X,\omega(t))$ converges in Gromov-Hausdorff topology to $(X_\infty,d_\infty)$, the metric completion of $(Y\setminus S,\omega_Y)$.
\end{thm}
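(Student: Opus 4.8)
The strategy is to combine the smooth convergence theory of Song--Tian and \cite{TWY} away from the singular fibres, the structural result of Theorem~\ref{theorem1}, and the Ricci lower bound, and to deduce Gromov--Hausdorff convergence by first producing subsequential limits and then identifying them. First I would observe that the hypotheses of Theorem~\ref{result0} place us in the setting of Proposition~\ref{observation2}: along the K\"ahler--Ricci flow \eqref{KRF0}, Setup~\ref{setup0} forces $2\pi c_1(K_X)=f^*[\chi]$; the base $Y$ is a smooth compact Riemann surface because $\dim Y=1$; and the critical value set $S=\{s_1,\dots,s_L\}$ is a finite set of points, hence has simple normal crossing support. Thus $\omega(t)\to f^*\omega_Y$ as currents on $X$ and, for every $K'\subset\subset X_{reg}$, in $C^0(K',\omega_0)$. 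In particular, over any compact $K\subset\subset Y\setminus S$ the metric $\omega(t)$ restricted to $f^{-1}(K)$ is, for $t$ large, $(1\pm\varepsilon)$-close to $f^*\omega_Y$ in the horizontal directions and of size $O(e^{-t})$ along the fibres, so that the fibres $X_y$, $y\in K$, have $\omega(t)$-diameter tending to $0$ uniformly on $K$, while the ``horizontal'' part of $d_{\omega(t)}$ over $f^{-1}(K)$ is squeezed towards $f^*d_{\omega_Y}$. Also, since $(f^*[\chi])^n=0$ ($[\chi]$ restricted to the curve $Y$ has square $0$), the total volume $\int_X\omega(t)^n\to 0$, i.e.\ the flow genuinely collapses.

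The next, and main, step is a uniform diameter bound $\operatorname{diam}(X,\omega(t))\le D$ together with a quantitative smallness of the ``caps'' near the singular fibres: for each $l$, after shrinking the disc $\Delta$ around $s_l$, I want $\operatorname{diam}\big(f^{-1}(\{|s|\le\delta\}),\omega(t)\big)\le\eta(\delta)$ with $\eta(\delta)\to 0$ as $\delta\to 0$, uniformly for $t\ge t_0$. This is exactly where the hypothesis $\operatorname{Ric}(\omega(t))\ge-\Lambda\,\omega(t)$ must be used: off the singular fibres everything is controlled by the smooth convergence above and by the finiteness of $\operatorname{diam}(Y\setminus S,\omega_Y)$, which is part of Theorem~\ref{theorem1}, since the right side of \eqref{est1.1} is radially integrable near each $s_l$ precisely because $\beta_l>0$; but near $f^{-1}(S)$ the naive comparisons degenerate. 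I would try to establish the cap estimate either by proving a uniform two-sided comparison $c\,f^*\omega_Y\le\omega(t)\le C\,f^*\omega_Y$ over a fixed punctured neighbourhood of $s_l$ (using the Ricci lower bound through a maximum-principle / parabolic Schwarz-lemma estimate fed by \eqref{est1.1}), which reduces the cap estimate to the corresponding statement on the base, or, failing that, by a Bishop--Gromov / relative-volume-comparison argument combined with the vanishing of the total volume to rule out long thin necks over $\{0<|s|<\delta\}$. Granting the diameter bound, Gromov's precompactness theorem yields that along any sequence $t_j\to\infty$, $(X,\omega(t_j))$ subconverges in Gromov--Hausdorff topology to a compact length metric space $(W,d_W)$.

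It then remains to identify $(W,d_W)$ with $(X_\infty,d_\infty)$, independently of the sequence. Using the $C^0_{loc}$-convergence on $X_{reg}$, the collapse of the fibres, and the cap estimate, the map $f$ induces on $W$ a continuous surjection onto $Y$ that restricts to an isometry from a dense open subset of $W$ onto $(Y\setminus S,\omega_Y)$; a further Bishop--Gromov argument shows that the part of $W$ lying over a small disc about each $s_l$ has diameter $\le\eta(\delta)$, so $W$ carries nothing beyond the completion of its regular part. Since by Theorem~\ref{theorem1} that completion is $(X_\infty,d_\infty)$, a compact length space homeomorphic to $Y$, we get $(W,d_W)=(X_\infty,d_\infty)$, hence by uniqueness of the subsequential limit the full Gromov--Hausdorff convergence $(X,\omega(t))\to(X_\infty,d_\infty)$; concretely, the maps $X\to X_\infty$ collapsing each fibre to the corresponding point of $Y\cong X_\infty$ are $\varepsilon_t$-Gromov--Hausdorff approximations with $\varepsilon_t\to 0$. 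The hard part is the second step --- the uniform diameter bound and the cap estimate near the singular fibres --- which is the only place the Ricci lower-bound hypothesis is genuinely needed; everywhere else the argument runs on the smooth convergence theory over $X_{reg}$ and on Theorem~\ref{theorem1}.
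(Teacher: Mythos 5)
Your proposal is essentially on the right track and identifies the two genuinely hard points exactly where the paper does: (i) the uniform diameter bound and (ii) the quantitative smallness of the ``caps'' over small discs around the singular values $s_l$, both hinging on the Ricci lower bound. The paper's Lemma~4.4 (diameter bound) and Lemma~4.5 (cap estimate) prove precisely these via Bishop--Gromov relative volume comparison: the set $X_{\epsilon^L}$ has $\omega(t)$-diameter $\le D_1$ by lifting near-geodesics from $(Y\setminus S,\omega_Y)$, while $\mathrm{Vol}_{\omega(t)}(X\setminus X_{\epsilon^L})\le\epsilon\, e^{-(n-1)t}$ versus $\mathrm{Vol}_{\omega(t)}(X_{\epsilon^L})\gtrsim e^{-(n-1)t}$; combining these with the volume-comparison inequality
\[
\frac{\mathrm{Vol}(B(x_t,R_t+D_1))}{\mathrm{Vol}(B(x_t,R_t))}\le\frac{\int_0^{R_t+D_1}\sinh^{2n-1}v\,dv}{\int_0^{R_t}\sinh^{2n-1}v\,dv}
\]
forces $R_t$, the maximal distance from the singular part to $X_{\epsilon^L}$, to be small. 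So your ``option (b)'' is exactly what the paper does and is the correct route; your ``option (a)'' --- a uniform two-sided comparison $c\,f^*\omega_Y\le\omega(t)\le C\,f^*\omega_Y$ over a fixed punctured neighbourhood of $s_l$ --- is \emph{not} available and you should not expect it to be: Proposition~2 gives $C^{-1}\omega_t\le\omega(t)\le C\omega_t$ only on compacta of $X_{reg}$, and the known lower bound near singular fibres is only $\omega(t)\ge C^{-1}f^*\chi$ with the \emph{fixed} metric $\chi$, not $f^*\omega_Y$. No comparison uniform up to the singular fibre is established (or expected), so the volume-comparison argument is the essential one.

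The other difference is in the concluding step. You propose to use Gromov precompactness to extract subsequential limits and then identify them with $(X_\infty,d_\infty)$ via a map induced by $f$. This works and is close to what the paper does for Theorem~5 (the product case, where Cheeger--Colding theory is genuinely needed). For Theorem~3 with a one-dimensional base, however, the paper avoids precompactness entirely and verifies directly that $f:X\to Y$ together with a section $g:Y\to X$ give $8\epsilon$-Gromov--Hausdorff approximations, by a case analysis on whether the two points lie in $X_{\epsilon^L}$ or in the cap, using the cap estimate and the $C^0$-convergence from Proposition~\ref{observation2} over compacta of $X_{reg}$. The direct route is slightly more economical here because one already controls the distance function on $X$ explicitly; your precompactness-plus-identification route buys nothing additional in this setting, though it is the right framework when, as in Theorem~5, one needs to rule out lower-dimensional pieces in the limit via renormalized limit measures.

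One small inaccuracy: it is not the mere vanishing of total volume that is used, but the sharper asymptotics $\omega(t)^n\sim e^{-(n-1)t}\Omega$ from Proposition~2(1), which give a matched rate for the volume of $X_{\epsilon^L}$ and of its complement; without this matched rate the Bishop--Gromov ratio would not produce a bound on $R_t$. With this caveat and the note that only your option (b) survives, the proposal matches the paper's argument.
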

Note that the setting of Theorems \ref{theorem1} and \ref{result0} applies to any compact K\"ahler manifold with semi-ample canonical line bundle and Kodaira dimension one.\\
\par When $dim(X)=2$, i.e. $X$ is a minimal elliptic surface, Theorem \ref{result0} is proved in our previous work joint with Z.L. Zhang \cite{ZyZz1}, where we made crucial use of the result of Song and Tian \cite[Lemma 3.4]{ST06} and Hein \cite[Section 3.3]{He}, i.e. surface case in Theorem \ref{theorem1}. Now, as we have Theorem \ref{theorem1}, the argument in \cite{ZyZz1} can be carried out to obtain Theorem \ref{result0}. We will provide a sketch in Section \ref{proof.1}.\\

We can further extend Theorem \ref{result0} to more settings, which are products of Calabi-Yau fibrations with one dimensional bases. Precisely, we set
\begin{setup}\label{setup3}
Let $f:X\to\Sigma$ (resp. $f':X'\to\Sigma'$) be a holomorphic surjective map from an $n$-dimensional (resp. $m$-dimensional) compact connected K\"ahler manifold $X$ (resp. $X'$) to a compact connected Riemann surface $\Sigma$ (resp. $\Sigma'$) with connected fibers and $f^*\chi\in2\pi c_1(K_X)$ (resp. $f'^*\chi'\in2\pi c_1(K_{X'})$) for some K\"ahler metric $\chi$ (resp. $\chi'$) on $\Sigma$ (resp. $\Sigma'$). Set $S\subset\Sigma$ (resp. $S'\subset \Sigma'$) be the critical values of $f:X\to\Sigma$ (resp. $f':X'\to\Sigma'$), $\tilde X:=X\times X'$, $\tilde\Sigma:=\Sigma\times \Sigma'$, $\tilde\Sigma_{reg}:=(\Sigma\setminus S)\times(\Sigma'\setminus S')$ and $\tilde f:=(f,f'):\tilde X\to\tilde\Sigma$. We know there exists a K\"ahler metric $\tilde\chi:=\chi+\chi'$ on $\tilde\Sigma$ such that $\tilde f^*\tilde\chi\in2\pi c_1(K_{\tilde X})$. Given an arbitrary K\"ahler metric $\omega_0$ on $\tilde X$, the smooth solution $\omega(t)$ to the K\"ahler-Ricci flow \eqref{KRF0} on $\tilde X$ satisfies Setup \ref{setup2}. Moreover, we also have a unique generalized K\"ahler-Einstein current $\omega_{\tilde\Sigma}$ in K\"ahler class $\tilde\chi$ on $\tilde \Sigma$ and $\omega_{\tilde\Sigma}$ is a K\"ahler metric on $\tilde\Sigma_{reg}$.
\end{setup}

We have the following theorem.

\begin{thm}\label{result}
Given $\tilde f:\tilde X\to\tilde\Sigma$ as in above Setup \ref{setup3}. Let $\omega(t)_{t\in[0,\infty)}$ be the solution to the K\"ahler-Ricci flow \eqref{KRF0} on $\tilde X$.
\begin{itemize}
\item[(1)] $(\tilde X_\infty,\tilde d_\infty)$, the metric completion of $(\tilde\Sigma_{reg},\omega_{\tilde\Sigma})$, is a compact length metric space homeomorphic to $\tilde\Sigma$.
\item[(2)] Assume Ricci curvature of $\omega(t)$ is uniformly bounded from below. Then, as $t\to\infty$, $(\tilde X,\omega(t))\to (\tilde X_\infty,\tilde d_\infty)$ in Gromov-Hausdorff topology.
\end{itemize}
\end{thm}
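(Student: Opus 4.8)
The plan is to prove part (1) by showing that $\omega_{\tilde\Sigma}$ is the Riemannian product of the two one-dimensional generalized K\"ahler--Einstein currents and then applying Theorem \ref{theorem1} to each factor. Let $\omega_\Sigma=\chi+\sqrt{-1}\partial\bar\partial\psi$ and $\omega_{\Sigma'}=\chi'+\sqrt{-1}\partial\bar\partial\psi'$ be the solutions of \eqref{equation0} on $\Sigma$ and $\Sigma'$. I would choose the background volume form on $\tilde X$ to be $\tilde\Omega:=\pi_X^*\Omega\wedge\pi_{X'}^*\Omega'$, which satisfies $\sqrt{-1}\partial\bar\partial\log\tilde\Omega=\tilde f^*\tilde\chi$; then the pushforward formula \eqref{equation}, together with $\tilde f_*\tilde\Omega=(f_*\Omega)\boxtimes(f'_*\Omega')$ and $\tilde\chi^2=2\,\pi_\Sigma^*\chi\wedge\pi_{\Sigma'}^*\chi'$, gives $\tilde F=c\,\pi_\Sigma^*F\cdot\pi_{\Sigma'}^*F'$ on $\tilde\Sigma_{reg}$ for a positive constant $c$. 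Since in dimension one \eqref{equation0} reads $\omega_\Sigma=e^\psi F\chi$, a direct computation shows that $\pi_\Sigma^*\psi+\pi_{\Sigma'}^*\psi'-\log c$ lies in $PSH(\tilde\Sigma,\tilde\chi)\cap L^\infty(\tilde\Sigma)\cap C^\infty(\tilde\Sigma_{reg})$ and solves \eqref{equation0} on $\tilde\Sigma$; by the uniqueness of the solution to \eqref{equation0} it is therefore the potential of $\omega_{\tilde\Sigma}$, so $\omega_{\tilde\Sigma}=\pi_\Sigma^*\omega_\Sigma+\pi_{\Sigma'}^*\omega_{\Sigma'}$ and $(\tilde\Sigma_{reg},\omega_{\tilde\Sigma})$ is the Riemannian product $(\Sigma\setminus S,\omega_\Sigma)\times(\Sigma'\setminus S',\omega_{\Sigma'})$. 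The metric completion of a Riemannian product is the product, with the $\ell^2$ metric, of the metric completions of the factors; a product of two compact length metric spaces is again a compact length metric space; and by Theorem \ref{theorem1} the two factors complete to compact length spaces homeomorphic to $\Sigma$ and $\Sigma'$. Hence $(\tilde X_\infty,\tilde d_\infty)$ is a compact length metric space homeomorphic to $\Sigma\times\Sigma'=\tilde\Sigma$, which is (1).

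For part (2) the plan is to run the argument proving Theorem \ref{result0}, i.e.\ the method of \cite{ZyZz1}, with part (1) playing the role of Theorem \ref{theorem1} and Proposition \ref{observation2} that of the convergence statement for Setup \ref{setup1}. Since $\omega(t)$ on $\tilde X$ satisfies Setup \ref{setup2} (as recorded in Setup \ref{setup3}), Proposition \ref{observation2} gives $\omega(t)\to\tilde f^*\omega_{\tilde\Sigma}$ as currents on $\tilde X$ and in $C^0_{loc}(\tilde X_{reg})$. I would then proceed in three steps: (a) prove a uniform bound $\sup_{t\ge0}\mathrm{diam}(\tilde X,\omega(t))<\infty$; (b) apply Gromov precompactness, so that using (a) and the Ricci lower bound every sequence $t_i\to\infty$ has a subsequence along which $(\tilde X,\omega(t_i))\to(W,d_W)$ in Gromov--Hausdorff topology for some compact metric space $(W,d_W)$; (c) identify each such $(W,d_W)$ with $(\tilde X_\infty,\tilde d_\infty)$, which by uniqueness of the limit upgrades the subsequential convergence to genuine Gromov--Hausdorff convergence.

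For step (a), the horizontal directions are controlled by $\mathrm{diam}(\tilde X_\infty,\tilde d_\infty)<\infty$ from part (1) together with the $C^0$-convergence of $\omega(t)$ to $\tilde f^*\omega_{\tilde\Sigma}$ on $\tilde f^{-1}(K)$ for $K\subset\subset\tilde\Sigma_{reg}$; the fibers collapse, since along \eqref{KRF0} one has $[\omega(t)|_{\tilde X_{(y,y')}}]=e^{-t}[\omega_0|_{\tilde X_{(y,y')}}]$, so the fiber volume is of order $e^{-(n+m-2)t}$ while the $e^{t}$-rescaled fiber metric converges to a Ricci-flat metric, forcing $\mathrm{diam}(\tilde X_{(y,y')},\omega(t))\to0$; and the regions over small neighborhoods of the critical locus $(S\times\Sigma')\cup(\Sigma\times S')$ are controlled by combining the integrable growth estimate \eqref{est1.1} for the two factors of $\omega_{\tilde\Sigma}$ with the Ricci lower bound, exactly as in \cite{ZyZz1}. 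For step (c), over any $K\subset\subset\tilde\Sigma_{reg}$ the $C^0_{loc}$-convergence together with the fiber collapse yields $(\tilde f^{-1}(K),\omega(t_i))\to(K,d_{\omega_{\tilde\Sigma}})$ in Gromov--Hausdorff topology; exhausting $\tilde\Sigma_{reg}$ by such $K$ produces a distance-preserving embedding of $(\tilde\Sigma_{reg},\omega_{\tilde\Sigma})$ onto a dense subset of $W$, and \eqref{est1.1} is then used to show this dense subset is geometrically tame enough that $W$ is its metric completion, i.e.\ $W=\tilde X_\infty$.

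The step I expect to be the main obstacle is (a), specifically the estimate over neighborhoods of the critical locus, where the local $C^0$-convergence of $\omega(t)$ is unavailable and one must combine the precise base asymptotics \eqref{est1.1} with the Ricci lower bound to show these regions contribute a uniformly small amount to the diameter; the analogous point in step (c)---that the Gromov--Hausdorff limit picks up no extra points over the critical locus---is delicate for the same reason. It is precisely part (1), the product structure of $\omega_{\tilde\Sigma}$, that makes the one-dimensional estimate \eqref{est1.1} applicable here even though $\dim\tilde\Sigma=2$, and hence lets the method of \cite{ZyZz1} go through.
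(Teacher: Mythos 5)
Part (1) of your proposal follows the paper's argument: the product structure $\omega_{\tilde\Sigma}=\pi_\Sigma^*\omega_\Sigma+\pi_{\Sigma'}^*\omega_{\Sigma'}$ is exactly what the paper establishes in Subsection \ref{subs3.1'} (via equations \eqref{GKE0}--\eqref{GKE2}), and then Theorem \ref{theorem1} is applied to each factor. That part is fine.

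For part (2), however, there is a genuine gap. You propose to ``run the argument proving Theorem \ref{result0}, i.e.\ the method of \cite{ZyZz1}, with part (1) playing the role of Theorem \ref{theorem1},'' and in step (c) to control the region over the critical locus ``exactly as in \cite{ZyZz1}.'' This cannot work. The \cite{ZyZz1} argument hinges on the fact that when $\dim Y=1$ the critical set $S$ consists of finitely many points, so that the preimage region $X\setminus X_{\epsilon^L}$ has uniformly \emph{small diameter} (this is Lemma \ref{geod5}, using that $\partial B_{\epsilon^L}$ has $d_\infty$-diameter $O(\epsilon)$). That small-diameter estimate is what allows one to reroute an $\omega(t)$-geodesic that wanders over the critical locus back into $X_{\epsilon^L}$ at negligible cost, i.e.\ it is what gives Lemma \ref{geod3}, the essential distance comparison in the ``hard'' direction. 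In the product setting, the critical locus $\tilde S=(S\times\Sigma')\cup(\Sigma\times S')$ is a divisor, and the set $\tilde X\setminus(X_{\epsilon^L}\times X'_{\epsilon^L})$ is a thin \emph{tubular neighborhood} of $\tilde f^{-1}(\tilde S)$: Lemma \ref{lemondiam}(2) does give $R_t\le\epsilon$ (small width) and small relative volume, but this neighborhood has diameter comparable to $\mathrm{diam}(\Sigma')+\mathrm{diam}(\Sigma)$, not $O(\epsilon)$. Consequently there is no analogue of Lemma \ref{geod5}, and a minimal $\omega(t)$-geodesic could a priori save length by traveling a long way inside the tube over $\tilde S$; your step (c) cannot rule this out using only \eqref{est1.1}.

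The paper addresses exactly this point (see the Remark after Theorem \ref{result}, which says the proof of Theorem \ref{result} is ``more involved'' and requires Cheeger--Colding theory). The argument in Section \ref{proof} takes a subsequential Gromov--Hausdorff limit $(Z,d_Z)$, constructs the renormalized limit measure $\nu$ of Cheeger--Colding \cite{CC1}, identifies $\nu$ on the regular part $Z_0$ with the measure induced by $\omega_{\tilde\Sigma}^2$ (Lemmas \ref{meas}, \ref{meas1}), shows $\nu(Z\setminus Z_0)=0$ (Lemma \ref{meas2}), and, crucially, shows $\nu_{-1}(Z\setminus Z_0)=0$, i.e.\ the singular set has measure-theoretic codimension $\geq 2$ (Lemma \ref{meas3}(3), which uses the asymptotics from Theorem \ref{theorem1} via the product structure). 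Only then can one invoke \cite[Theorem 3.7]{CC2} to connect points of $Z_0$ by curves staying in $Z_0$ that are almost minimal in $(Z,d_Z)$, yielding the reverse inequality \eqref{isom2} and hence the isometry $h:(\tilde\Sigma_{reg},\tilde d_\infty)\to(Z_0,d_Z)$. Your proposal omits this measure-theoretic input entirely, and without it the identification of the limit space does not go through.
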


\begin{rem}
On the one hand, the proofs of both Theorems \ref{result} and \ref{result0} make use of Theorem \ref{theorem1} crucially. On the other hand, the proof of Theorem \ref{result} is more involved than Theorem \ref{result0} in the sense that, to prove Theorem \ref{result}, we have to to apply some results in Cheeger-Colding's theory on Ricci limit space \cite{CC1,CC2}, see Section \ref{proof} for details.
\end{rem}

\subsection{The continuity method: collapsing limits at infinite time}
Next, we move to the continuity method case. The continuity method we will consider in this paper is the one proposed by La Nave and Tian \cite{LT} (also see \cite{Ru}), which is proposed to carry out the Analytic Minimal Model Program, see \cite{LT} for more details. Given a compact K\"ahler manifold $X$ and an arbitrary K\"ahler metric $\omega_0$ on $X$, assume $\omega(t)_{t\in[0,\infty)}$ be the smooth long time solution to the following equation of La Nave and Tian \cite{LT}:

\begin{equation}\label{LT0}
\left\{
\begin{aligned}
(1+t)\omega(t)&=\omega_0-tRic(\omega(t))\\
\omega(0)&=\omega_0,
\end{aligned}
\right.
\end{equation}
Easily, we find the K\"ahler class $[\omega(t)]$ along the continuity method \eqref{LT0} satisfies
\begin{equation}
[\omega(t)]=\frac{1}{1+t}\omega_0+\frac{t}{1+t}2\pi c_1(K_X).
\end{equation}
Therefore, when we assume $\omega(t)_{t\in[0,\infty)}$ satisfies Setup \ref{setup0}, there also holds \eqref{semiample1} and so we can follow discussions in subsection \ref{KRFsubsect} (e.g. we have the same generalized K\"ahler-Einstein current $\omega_Y$ solving \eqref{equation0} on $Y$), but replace the K\"ahler-Ricci flow \eqref{KRF0} by the continuity method \eqref{LT0}.
\par For the continuity method \eqref{LT0}, our first observation is the following
\begin{prop}\label{observation3}
Assume $\omega(t)_{t\in[0,\infty)}$, the solution of the continuity method \eqref{LT0}, satisfies Setup \ref{setup0}. Then, as $t\in\infty$, $\omega(t)\to f^*\omega_Y$ as currents on $X$.
\end{prop}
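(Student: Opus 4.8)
The plan is to reduce the statement to the corresponding convergence for the Kähler–Ricci flow (as encoded in Song–Tian's work and Proposition \ref{observation2}), via the standard parabolic reduction of the continuity method \eqref{LT0} to a Monge–Ampère equation at the level of potentials. Concretely, writing $\chi_t:=\frac{1}{1+t}\omega_0+\frac{t}{1+t}f^*\chi$ as a smooth reference form in the class $[\omega(t)]$, and $\omega(t)=\chi_t+\sqrt{-1}\partial\bar\partial\varphi(t)$, equation \eqref{LT0} becomes a family of complex Monge–Ampère equations of the type
\begin{equation*}
(1+t)\bigl(\chi_t+\sqrt{-1}\partial\bar\partial\varphi(t)\bigr)=\omega_0+t\sqrt{-1}\partial\bar\partial\log\frac{(\chi_t+\sqrt{-1}\partial\bar\partial\varphi(t))^n}{\Omega}+tf^*\chi,
\end{equation*}
so that, after suitably normalizing $\varphi(t)$, the potential $\varphi(t)$ satisfies an elliptic equation whose structure is, for large $t$, a perturbation of the limiting equation $(f^*\chi+\sqrt{-1}\partial\bar\partial\varphi_\infty)^n=e^{(1+t)\varphi(t)-\cdots}\Omega$ governing the collapsed limit. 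The target is to show $\varphi(t)\to\psi\circ f$ (up to an additive constant) in $L^1(X)$, where $\psi$ solves \eqref{equation0}; pulling this back, $\omega(t)\to f^*\omega_Y$ as currents follows since $\sqrt{-1}\partial\bar\partial$ is continuous on $L^1$.

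The key steps I would carry out, in order, are: (i) establish a uniform $C^0$ bound for the normalized potentials $\varphi(t)$ on $X$, which is the continuity-method analogue of Song–Tian's uniform potential estimate for \eqref{KRF0} — this rests on the semi-ample fibration structure, the $L^{1+\epsilon}$ bound on the fiber volume ratio $F$, and a barrier/maximum principle argument using the explicit form of $\chi_t$; (ii) derive a uniform bound for the fiberwise Monge–Ampère masses, i.e. control $\int_{X_y}\omega(t)^{n-k}$ along smooth fibers, showing they converge to $V_0$ times the appropriate quantity, which identifies the limiting fiberwise geometry as Ricci-flat (the semi-flat form $\omega_{SRF}$); (iii) pass to the limit $t\to\infty$ using the $C^0$ bound together with a standard compactness argument for bounded quasi-plurisubharmonic functions, extracting a subsequential limit $\varphi_\infty$ that is pulled back from $Y$ (here one uses that the fiberwise oscillation of $\varphi(t)$ tends to zero, again by a fiberwise maximum principle as in \cite{ST12}); (iv) verify that the descended limit $\varphi_\infty$ solves \eqref{equation0} on $Y\setminus S$ in the pluripotential sense, hence equals $\psi$ by the uniqueness in \cite{EGZ,ST12}, so the full family (not just subsequences) converges.

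I expect the main obstacle to be step (i) together with the fiberwise estimate in (iii): unlike the Kähler–Ricci flow, where parabolic maximum principle arguments and the monotonicity $\partial_t\varphi$ give clean control, the continuity method \eqref{LT0} is elliptic in $\varphi(t)$ for each fixed $t$ but the time derivative enters the equation through $\log\frac{\omega(t)^n}{\Omega}$ in a way that couples the $t$-dependence nontrivially, so one must instead differentiate \eqref{LT0} in $t$ to obtain the auxiliary equation satisfied by $\dot\varphi(t)$ and run the maximum principle there; controlling $\dot\varphi(t)$ uniformly, and hence the oscillation of $\varphi(t)$ along fibers as $t\to\infty$, is the technical heart of the argument. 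Once these a priori estimates are in place, the remaining steps are routine adaptations of \cite{ST06,ST12}, and in fact this is precisely why the statement is labelled an ``observation'': the ingredients are all present in the literature on \eqref{LT0} (e.g. \cite{LT,Ru}) and need only be assembled. I would also remark that the convergence is a priori only as currents on $X$ (not in stronger topology away from $S$), which is all that is claimed here; the stronger statements are deferred to later results in the paper that impose the Ricci lower bound.
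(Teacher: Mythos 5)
Your high-level route matches the paper's: reduce \eqref{LT0}, after a time reparametrization, to a scalar complex Monge--Amp\`ere equation relative to the reference forms $\omega_t=e^{-t}\omega_0+(1-e^{-t})f^*\chi$, establish a uniform $C^0$ bound for the normalized potentials, use compactness of $\omega_t$-PSH functions and vanishing of the fiberwise oscillation to extract a limit pulled back from $Y$, and then identify it with $\psi$ via uniqueness for \eqref{equation0}, giving $L^1$-convergence of potentials and hence convergence of currents. This is exactly what the paper does, by deferring to the arguments of \cite{ZyZz1} (themselves modelled on \cite{To10}), as packaged in Proposition~\ref{facts.1}.

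However, your ``main obstacle'' paragraph contains a genuine misconception. You assert that ``the time derivative enters the equation through $\log\frac{\omega(t)^n}{\Omega}$'' and that one must therefore differentiate in $t$ and control $\dot\varphi(t)$ via an auxiliary equation, calling this the technical heart. That is not the case. After the scalar reduction, the continuity method becomes the equation \eqref{LT2},
\[
\varphi(t)=(1-e^{-t})\log\frac{e^{(n-k)t}(\omega_t+\sqrt{-1}\partial\bar\partial\varphi(t))^n}{\Omega},
\]
in which \emph{no} time derivative appears: for each fixed $t$ this is a purely elliptic Monge--Amp\`ere equation in $\varphi(t)$, with $t$ entering only through the coefficients $\omega_t$ and $(1-e^{-t})$. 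The $C^0$ bound is therefore obtained by a direct elliptic maximum principle (together with the $L^{1+\epsilon}$ control on $F$ and Ko{\l}odziej-type estimates, as in \cite{ZyZz1,ZyZz2}), and if anything is \emph{simpler} than for \eqref{KRF0}, where $\dot\varphi$ genuinely appears in \eqref{conti.eq} and must be controlled. The fiberwise oscillation going to zero comes from the fiberwise elliptic Monge--Amp\`ere structure as in \cite{To10,ST12}; controlling $\dot\varphi$ would not by itself imply it, so the step you flag as the crux is both unnecessary and, as stated, not logically sufficient. Two further minor points: the paper's scalar reduction includes the factor $e^{(n-k)t}$ inside the $\log$ to normalize the collapsing volume (this is what feeds directly into Lemma~\ref{L1conv.1} and the identification with \eqref{equation0}); and your closing remark that stronger-than-current convergence is ``deferred to later results \ldots that impose the Ricci lower bound'' is inaccurate --- Proposition~\ref{observation4} gives $C^0$ metric convergence away from singular fibers with no extra curvature hypothesis, and for \eqref{LT0} the Ricci lower bound is in any case automatic; it is only invoked for the Gromov--Hausdorff analysis near the singular fibers.
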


In Proposition \ref{observation3}, if $dim(X)=2$, then that conclusion was obtained in \cite{ZyZz1} (whose argument is modified from \cite{To10}). Moreover, we can modify arguments in \cite{ZyZz1} to check Proposition \ref{observation3} in general case. Precisely, we shall obtain an $L^1$-convergence of K\"ahler potentials and then Proposition \ref{observation3} follows. Moreover, as we will have $C^2$-estimate away from singular fibers for K\"ahler potential, we further improve the $L^1$-convergence to $C^{1,\alpha}$-convergence away from singular fibers. \\

\par We also have $C^0$-convergence of metric along the continuity method \eqref{LT0}.

\begin{prop}\label{observation4}
Assume $\omega(t)_{t\in[0,\infty)}$, the solution of the continuity method \eqref{LT0}, satisfies Setup \ref{setup0}. Then for any $K'\subset\subset X_{reg}$, $\omega(t)\to f^*\omega_Y$ in $C^0(K',\omega_0)$-topology as $t\to\infty$.
\end{prop}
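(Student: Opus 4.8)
The plan is to transplant the proof of \cite[Theorem 1.2]{TWY} (see also Proposition \ref{observation2}) from the K\"ahler-Ricci flow \eqref{KRF0} to the continuity method \eqref{LT0}, feeding it the a priori estimates produced in the proof of Proposition \ref{observation3}. At the level of potentials, since $f^*\chi\in2\pi c_1(K_X)$ by \eqref{semiample1}, the class $[\omega(t)]=\frac{1}{1+t}[\omega_0]+\frac{t}{1+t}2\pi c_1(K_X)$ is represented by $\hat\omega_t:=\frac{1}{1+t}\omega_0+\frac{t}{1+t}f^*\chi$; writing $\omega(t)=\hat\omega_t+\sqrt{-1}\partial\bar\partial\varphi_t$, equation \eqref{LT0} becomes the family of complex Monge-Amp\`ere equations
\[
(\hat\omega_t+\sqrt{-1}\partial\bar\partial\varphi_t)^n=e^{\frac{1+t}{t}\varphi_t+c_t}\,\Omega,
\]
with $\Omega$ the volume form fixed in Setup \ref{setup0} and $c_t$ a normalising constant. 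From the proof of Proposition \ref{observation3} we already have: a uniform $L^\infty$ bound for $\varphi_t$; the convergence $\varphi_t\to f^*\psi$ in $L^1(X)$ (hence $\omega(t)\to f^*\omega_Y$ as currents), where $\psi$ solves \eqref{equation0}; and, on each $K'\subset\subset X_{reg}$, a uniform upper bound $\omega(t)\le C(K')\,\omega_0$ together with local $C^{1,\alpha}$-convergence $\varphi_t\to f^*\psi$. Note that $\omega(t)$, while uniformly bounded above on $K'$, degenerates in the fibre directions, so one cannot obtain $C^0$-convergence of the metric by merely differentiating the potential convergence; the collapsing has to be analysed directly.

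Following \cite{To10,TWY}, I would decompose $\omega(t)$ on $X_{reg}$ into its horizontal and vertical parts relative to $f$ and compare it with $f^*\omega_Y+\omega_{F,t}$, where $\omega_{F,t}|_{X_y}$ denotes the Ricci-flat K\"ahler metric in the class $\frac{1}{1+t}[\omega_0|_{X_y}]$ --- i.e.\ the restriction to fibres of the semi-flat form $\frac{1}{1+t}\omega_{SRF}$ in the notation of the excerpt --- and argue that $\omega(t)-\big(f^*\omega_Y+\omega_{F,t}\big)\to0$ in $C^0(K',\omega_0)$. The horizontal--horizontal block tends to $f^*\omega_Y$, since $\hat\omega_t-f^*\chi=\frac{1}{1+t}(\omega_0-f^*\chi)\to0$ and the previous paragraph gives $\sqrt{-1}\partial\bar\partial\varphi_t\to f^*\omega_Y-f^*\chi$ on $K'$; the vertical--vertical block tends to $\omega_{F,t}$, because the restriction of $\omega(t)$ to a fibre is cohomologous to $\frac{1}{1+t}\omega_0|_{X_y}$, is uniformly controlled in $C^\infty$ after rescaling by $1+t$, and has relative volume form converging to the Ricci-flat one as dictated by the Monge-Amp\`ere equation together with the formula \eqref{equation} for $F$; and the mixed block tends to $0$ by the now-standard estimates of Tosatti et al.\ bounding the off-diagonal terms in terms of the fibrewise data. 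Since $\omega_{SRF}$ is a fixed smooth form on $X_{reg}$, $\omega_{F,t}\to0$ in $C^0(K',\omega_0)$, and the desired convergence $\omega(t)\to f^*\omega_Y$ follows.

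The main obstacle is the uniform-in-$t$ control of the vertical and mixed blocks on compact subsets of $X_{reg}$: for the K\"ahler-Ricci flow this is exactly the content of \cite{TWY}, and to port it one must verify that each parabolic maximum-principle step there --- notably the evolution inequalities for $\mathrm{tr}_{\omega(t)}(f^*\omega_Y)$ and for the rescaled fibre metric --- has an elliptic counterpart for the family \eqref{LT0}. I do not expect a genuinely new difficulty here: \eqref{LT0} differs from \eqref{KRF0} only in that $\partial_t$ is replaced by the algebraic term $\frac1t\big(\omega_0-(1+t)\omega(t)\big)$, so at each fixed $t$ the relevant differential inequalities become genuinely elliptic, and the required local estimates are precisely those obtained in the proof of Proposition \ref{observation3}; one also has to carry along the harmless lower-order discrepancies $\hat\omega_t-f^*\chi$ and $c_t$, both of which vanish as $t\to\infty$. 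The surface case $dim(X)=2$, carried out in \cite{ZyZz1}, serves as a model for the general argument.
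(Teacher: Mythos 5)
Your proposal follows essentially the same route as the paper, whose entire proof of Proposition \ref{observation4} is the one-line citation ``Proposition \ref{observation4} can be checked by the same arguments in \cite[Theorem 1.1]{ZyZz2}, which is technically motivated by arguments in \cite{TWY}.'' Your sketch --- reducing \eqref{LT0} to the elliptic complex Monge--Amp\`ere equation for $\varphi_t$, decomposing $\omega(t)$ into horizontal, vertical and mixed blocks relative to $f$ and comparing against $f^*\omega_Y+\omega_{F,t}$ with $\omega_{F,t}\to0$, and noting that the essential work is replacing the parabolic maximum-principle inequalities of \cite{TWY} by their elliptic counterparts (as carried out in \cite{ZyZz2} for the Fano case and \cite{ZyZz1} for surfaces) --- is precisely what that citation is shorthand for.
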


Proposition \ref{observation4} can be checked by the same arguments in \cite[Theorem 1.1]{ZyZz2}, which is technically motivated by arguments in \cite{TWY}.\\

\par We also have the following natural conjecture, proposed by La Nave and Tian \cite{LT}.

\begin{conj}\label{conj3}\cite[Conjecture 4.7]{LT}
Assume $\omega(t)_{t\in[0,\infty)}$, the solution of the continuity method \eqref{LT0}, satisfies Setup \ref{setup0}. Then, as $t\to\infty$, $(X,\omega(t))\to(X_\infty,d_\infty)$ in Gromov-Hausdorff topology. Here $(X_\infty,d_\infty)$ is the same metric space as in Conjecture \ref{conj0}.
\end{conj}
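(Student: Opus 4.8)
We indicate how one should approach Conjecture \ref{conj3} in the cases accessible to the methods developed here, namely when $\dim Y=1$, and — via the argument of Theorem \ref{result} — for products of two such Calabi--Yau fibrations; the general case is expected to remain open. The overall strategy mirrors the proofs of Theorems \ref{result0} and \ref{result}, with the K\"ahler--Ricci flow replaced by the continuity method \eqref{LT0}. A preliminary remark simplifies matters: differentiating \eqref{LT0} gives $Ric(\omega(t))=\tfrac{1}{t}\omega_0-\tfrac{1+t}{t}\omega(t)$, and since $\omega_0>0$ one obtains $Ric(\omega(t))\ge-\tfrac{1+t}{t}\omega(t)\ge-2\omega(t)$ for $t\ge1$, i.e. a uniform lower bound for the Ricci curvature of $\omega(t)$ \emph{automatically} — so, in contrast with Theorem \ref{result0}, no such hypothesis need be imposed on \eqref{LT0}.

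\emph{Step 1 (uniform diameter bound).} The plan is to show $\mathrm{diam}(X,\omega(t))\le C$ uniformly in $t$. Over compact subsets of $X_{reg}$ this follows from Proposition \ref{observation4}, since $\omega(t)\to f^*\omega_Y$ in $C^0_{loc}(X_{reg})$, the smooth fibers collapse, and $(Y\setminus S,\omega_Y)$ has finite diameter by Theorem \ref{theorem1}. The substantive point is to bound $\mathrm{diam}\big(f^{-1}(\Delta_{s_l}(\epsilon)),\omega(t)\big)$, where $\Delta_{s_l}(\epsilon)$ is the disk of radius $\epsilon$ about a critical value $s_l$ in the chart $(\Delta,s)$; here I would use estimate \eqref{est1.1}, which — because $\beta_l>0$ — forces the $\omega_Y$-diameter of the punctured disk $\Delta_{s_l}(\epsilon)\setminus\{s_l\}$ to tend to $0$ as $\epsilon\to0$, together with a comparison $\omega(t)\le C\,\omega_{ref}$ against a fixed reference metric $\omega_{ref}$ on $f^{-1}(\Delta_{s_l}(\epsilon))$ having cusp-type but finite-diameter singularity, proved by the maximum principle for the elliptic equation \eqref{LT0}. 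This should yield the sharper statement that $\mathrm{diam}\big(f^{-1}(\Delta_{s_l}(\epsilon)),\omega(t)\big)\le\eta(\epsilon)$ with $\eta(\epsilon)\to0$ as $\epsilon\to0$, uniformly in $t$.

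\emph{Step 2 (precompactness and identification of the limit).} With the automatic lower Ricci bound and Step 1, Gromov's precompactness theorem gives, for any $t_i\to\infty$, a subsequence along which $(X,\omega(t_i))\to(W,d_W)$ in Gromov--Hausdorff topology for some compact metric space $(W,d_W)$. To identify $(W,d_W)$ with $(X_\infty,d_\infty)$, the plan is to combine the $C^0_{loc}$-convergence on $X_{reg}$ with the fiber collapse to show that, for fixed $\epsilon>0$ and $K_\epsilon:=Y\setminus\bigcup_{l}\Delta_{s_l}(\epsilon)$, the $\omega(t_i)$-distances between points of $f^{-1}(K_\epsilon)$ converge, after applying $f$, to the $\omega_Y$-distances on $K_\epsilon$; the $\eta(\epsilon)$-estimate of Step 1 then permits letting $\epsilon\to0$ and shows that $(Y\setminus S,\omega_Y)$ embeds isometrically and densely into $(W,d_W)$, so $(W,d_W)$ is its metric completion, which equals $(X_\infty,d_\infty)$ by Theorem \ref{theorem1}. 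Since the limit is independent of the subsequence, $(X,\omega(t))\to(X_\infty,d_\infty)$ in Gromov--Hausdorff topology. For the product case $\tilde X=X\times X'$ over a surface $\tilde\Sigma=\Sigma\times\Sigma'$ (the continuity-method analogue of Setup \ref{setup3}, where Propositions \ref{observation3}--\ref{observation4} still apply to each factor and to $\tilde X$), I would feed this into the scheme of Theorem \ref{result}: the lower Ricci bound persists, and Cheeger--Colding splitting \cite{CC1,CC2} is used to match the limit with the metric completion of $(\tilde\Sigma_{reg},\omega_{\tilde\Sigma})$, which is compact and homeomorphic to $\tilde\Sigma$.

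\emph{Main obstacle.} I expect the crux to be Step 1, together with the companion distance-convergence over $K_\epsilon$ as $\epsilon\to0$ in Step 2. The a priori estimates behind Propositions \ref{observation3}--\ref{observation4} degenerate as one approaches $S$, so one genuinely needs the refined local description \eqref{est1.1} of $\omega_Y$ — not merely its $C^0_{loc}$-convergence — to control the geometry of $f^{-1}(\Delta_{s_l}(\epsilon))$ uniformly in $t$. Moreover, carrying this out for \eqref{LT0} requires re-deriving the relevant estimates in their elliptic incarnation for the continuity method rather than quoting the parabolic estimates of the flow case; most of the technical weight of the argument lies here.
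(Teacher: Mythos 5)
The proposal correctly identifies that Conjecture \ref{conj3} is addressed in the paper only in the cases $\dim Y=1$ (Theorem \ref{result0.1}) and the product setting (Theorem \ref{result.1}), correctly observes that the Ricci lower bound holds automatically for the continuity method \eqref{LT0}, and outlines Step 2 and the product case in the spirit of the paper's Sections \ref{proof.1} and \ref{proof}. However, there is a genuine gap in Step 1. You propose to bound $\mathrm{diam}\bigl(f^{-1}(\Delta_{s_l}(\epsilon)),\omega(t)\bigr)$ via a pointwise metric upper bound $\omega(t)\le C\,\omega_{ref}$ near the singular fibers, with $\omega_{ref}$ a fixed reference metric of cusp type, ``proved by the maximum principle.'' No such upper bound near singular fibers is established in the paper, and it is not at all clear it can be: the available estimates (Proposition \ref{facts.1}) give a global lower bound $\omega(t)\ge C^{-1}f^*\chi$ and two-sided comparison $C^{-1}\omega_t\le\omega(t)\le C\omega_t$ only on compacts $K\subset\subset X_{reg}$, away from the singular fibers. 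A full upper bound on $\omega(t)$ near $S$ is precisely where the $C^2$-type estimates degenerate, and it is one of the open difficulties in the collapsing problem; claiming it ``by the maximum principle'' without a construction of a suitable barrier $\omega_{ref}$ and verification of the boundary behaviour is not a proof.

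What the paper does instead (Lemmas \ref{geod4} and \ref{geod5}) deliberately avoids any metric upper bound near singular fibers, and replaces it by Bishop--Gromov volume comparison driven by the Ricci lower bound. The volume-form estimate $\omega(t)^n\le Ce^{-(n-k)t}\Omega$ from Proposition \ref{facts.1}(1), combined with the fact that $X\setminus X_{reg}$ is a proper subvariety of real codimension $\ge2$, yields a small volume ratio $Vol_{\omega(t)}(X\setminus X_{\epsilon^L})/Vol_{\omega(t)}(X_{\epsilon^L})\lesssim\epsilon$; volume comparison then forces the maximal distance $R_t$ from $X\setminus X_{\epsilon^L}$ to $X_{\epsilon^L}$ to be small (eventually $R_t\le\epsilon$, after increasing $L$), which gives both the uniform diameter bound and the collapsing of $f^{-1}(\Delta_{s_l}(\epsilon))$. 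The role of estimate \eqref{est1.1} is to control $\mathrm{diam}_{\omega_Y}(B_{\epsilon^L})$ and produce the nearly-geodesic curves in $X_{\epsilon^L}$ (Lemmas \ref{geod1}--\ref{geod3}), not to compare $\omega(t)$ to a reference near $S$. You should replace the Step 1 mechanism with this volume-comparison argument; the rest of your outline then matches the paper's proof.
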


When $dim(X)=2$, i.e. $X$ is a minimal elliptic surfacehe, Conjecture \ref{conj3} was confirmed in \cite{ZyZz1}, where we made crucial use of the result of Song and Tian \cite[Lemma 3.4]{ST06} and Hein \cite[Section 3.3]{He}, i.e. surface case in Theorem \ref{theorem1}. Now, as we have Theorem \ref{theorem1}, we can prove the following main result.

\begin{thm}\label{result0.1}
Assume $\omega(t)_{t\in[0,\infty)}$, a solution to the continuity method \eqref{LT0} on $X$, satisfies Setup \ref{setup0} and $dim(Y)=1$. Then, as $t\to\infty$, $(X,\omega(t))$ converges in Gromov-Hausdorff topology to $(X_\infty,d_\infty)$, the metric completion of $(Y\setminus S,\omega_Y)$.
\end{thm}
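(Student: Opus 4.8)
The plan is to run the argument used in the minimal elliptic surface case \cite{ZyZz1}, now in arbitrary dimension, feeding in Theorem \ref{theorem1} in place of the surface-specific description of $(Y\setminus S,\omega_Y)$ near the singular values, and Propositions \ref{observation3} and \ref{observation4} in place of the Song--Tian / Tosatti--Weinkove--Yang convergence for the K\"ahler--Ricci flow; these give $\omega(t)\to f^*\omega_Y$ as currents on $X$ and in $C^0_{loc}(X_{reg})$ along \eqref{LT0}. One observation should be isolated first: unlike the K\"ahler--Ricci flow statement Theorem \ref{result0}, here no curvature hypothesis is imposed, and this is not an oversight --- rewriting \eqref{LT0} as $Ric(\omega(t))=\tfrac1t\omega_0-\tfrac{1+t}{t}\omega(t)$ and using $\omega_0>0$ yields $Ric(\omega(t))\ge-\tfrac{1+t}{t}\omega(t)\ge-2\omega(t)$ for $t\ge1$, so a uniform lower Ricci bound relative to the evolving metric is automatic along the continuity method. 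Hence every comparison-geometric input used for Theorem \ref{result0} is available here unconditionally.

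The first step is a uniform diameter bound $diam(X,\omega(t))\le C$. By Theorem \ref{theorem1} the base $(Y\setminus S,\omega_Y)$ has finite diameter, and estimate \eqref{est1.1} shows that for each $s_l\in S$ the $\omega_Y$-diameter of a punctured disk $\{0<|s|<\rho\}$ tends to $0$ as $\rho\to0$ (the density is integrable against $ds$ because $\beta_l>0$). Over a compact $K\subset\subset Y\setminus S$, Proposition \ref{observation4} gives $\omega(t)\to f^*\omega_Y$ in $C^0$, so horizontal $\omega(t)$-distances over $K$ are governed by $\omega_Y$-distances up to $o(1)$, while the fibers collapse --- both in volume, $\int_{X_y}(\omega(t)|_{X_y})^{n-k}\to0$, and, by the standard fiberwise estimates, in diameter. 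The preimages $f^{-1}(\{0<|s|<\rho\})$ of the singular values are controlled by combining \eqref{est1.1} with the lower Ricci bound and volume comparison, so their $\omega(t)$-diameter is at most $\varepsilon(\rho)+o(1)$. Patching these contributions yields the diameter bound together with the distance upper estimate
$$\limsup_{t\to\infty}d_{\omega(t)}(x,x')\le d_\infty\bigl(\iota(f(x)),\iota(f(x'))\bigr)\qquad(x,x'\in X),$$
where $\iota\colon Y\setminus S\hookrightarrow X_\infty$ is the inclusion into the metric completion: connect $f(x)$ to $f(x')$ by a nearly $\omega_Y$-minimizing path in $Y\setminus S$, detouring around $S$ at cost $\varepsilon(\rho)$, lift it horizontally over $X_{reg}$, and join $x,x'$ to the lift inside the collapsing fibers.

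The main obstacle is the matching lower estimate
$$\liminf_{t\to\infty}d_{\omega(t)}(x,x')\ge d_\infty\bigl(\iota(f(x)),\iota(f(x'))\bigr).$$
A minimizing $\omega(t)$-geodesic from $x$ to $x'$ projects to a path in $Y$ whose $\omega_Y$-length is at least $d_{\omega_Y}(f(x),f(x'))$; over any $K\subset\subset Y\setminus S$ Propositions \ref{observation3} and \ref{observation4} give $\omega(t)\ge f^*\omega_Y-o(1)$, so the $\omega(t)$-length of the part of the geodesic over $K$ bounds the corresponding $\omega_Y$-length from below. The difficulty is to prevent the geodesic from gaining a substantial shortcut through the collapsing singular fibers $f^{-1}(s_l)$; this is where the (automatic) lower Ricci bound is essential, via Bishop--Gromov and, if needed, segment-type inequalities as in \cite{ZyZz1,CC1,CC2}, together with the two-sided control \eqref{est1.1} on the $\omega_Y$-size of neighborhoods of $s_l$, to bound the length the geodesic spends near $S$ and hence the possible deficit. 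That $dim(Y)=1$ is genuinely helpful here: $S$ is a finite set of points, so only finitely many pairwise disjoint small disks need be excised and estimated.

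Combining the two estimates, $d_{\omega(t)}(x,x')\to d_\infty(\iota(f(x)),\iota(f(x')))$, which says precisely that, after collapsing fibers to points, $(X,\omega(t))$ converges in Gromov--Hausdorff topology to the metric completion of $(Y\setminus S,\omega_Y)$; by Theorem \ref{theorem1} this completion is the compact length space $(X_\infty,d_\infty)$, homeomorphic to $Y$, which is the asserted limit.
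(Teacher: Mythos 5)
Your sketch is correct and follows essentially the same route as the paper: the paper declares the proof identical to that of Theorem~\ref{result0}, carried out in Section~\ref{proof.1} via the argument of \cite{ZyZz1}, with Theorem~\ref{theorem1} supplying the asymptotics of $\omega_Y$ near $S$, Propositions~\ref{observation3}/\ref{observation4} supplying the $C^0_{loc}$-convergence along \eqref{LT0}, and the volume-comparison step (Lemmas~\ref{geod4}--\ref{geod5}) playing exactly the role you assign to Bishop--Gromov in controlling the collapse near the singular fibers. The only extra content you add is the explicit derivation $Ric(\omega(t))=\tfrac1t\omega_0-\tfrac{1+t}{t}\omega(t)\ge-2\omega(t)$ for $t\ge1$, which the paper asserts without computation; your framing of the conclusion as a two-sided limit of distances is just a repackaging of the paper's four Gromov--Hausdorff approximation conditions.
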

The proof of Theorems \ref{result0.1} and \ref{result0} are the same. Note that for the continuity method \eqref{LT0}, the Ricci curvature is uniformly bounded from below automatically.
\par We also have the analog of Theorem \ref{result} for the continuity method \eqref{LT0} as follows.

\begin{thm}\label{result.1}
Given $\tilde f:\tilde X\to\tilde\Sigma$ as in Setup \ref{setup3}. Let $\omega(t)_{t\in[0,\infty)}$ be the unique smooth solution to the continuity method  \eqref{LT0} on $\tilde X$. Then, as $t\to\infty$, $(\tilde X,\omega(t))\to (\tilde X_\infty,\tilde d_\infty)$ in Gromov-Hausdorff topology.
\end{thm}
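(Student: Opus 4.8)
The plan is to reduce Theorem \ref{result.1} to the combination of Theorem \ref{result.1}'s flow analogue, Theorem \ref{result}, together with the continuity-method versions of the underlying convergence inputs. Specifically, since $\tilde f:\tilde X\to\tilde\Sigma$ is a product of two Calabi–Yau fibrations with one-dimensional bases, it fits into Setup \ref{setup3}; part (1) of Theorem \ref{result} already tells us that $(\tilde X_\infty,\tilde d_\infty)$, the metric completion of $(\tilde\Sigma_{reg},\omega_{\tilde\Sigma})$, is a compact length metric space homeomorphic to $\tilde\Sigma$, and this statement makes no reference to the flow — it is purely a statement about the generalized Kähler–Einstein current, obtained by applying Theorem \ref{theorem1} on each factor $\Sigma$ and $\Sigma'$ and taking a product-type estimate for $\omega_{\tilde\Sigma}$. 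So the target metric space is already in hand, and what remains is the Gromov–Hausdorff convergence of $(\tilde X,\omega(t))$ along \eqref{LT0}.

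First I would record that for the continuity method \eqref{LT0} the Ricci curvature of $\omega(t)$ is uniformly bounded from below automatically: differentiating \eqref{LT0} gives $Ric(\omega(t)) = \frac{1}{t}(\omega_0 - (1+t)\omega(t)) \ge -\frac{1}{t}(1+t)\omega(t) \ge -C\omega(t)$ once $t\ge 1$, using $\omega(t)>0$; one also has $\omega(t)\ge c\,\omega_{\tilde\Sigma}$-type lower bounds and the $C^0$-convergence away from singular fibers from Proposition \ref{observation4} (applied on $\tilde X$, whose solution satisfies Setup \ref{setup2} by the discussion in Setup \ref{setup3}), as well as $\omega(t)\to \tilde f^*\omega_{\tilde\Sigma}$ as currents from Proposition \ref{observation3}. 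With these inputs, the proof of Theorem \ref{result} — which invokes Cheeger–Colding theory \cite{CC1,CC2} on the Ricci-limit space, using precisely a uniform lower Ricci bound plus the local smooth convergence on $\tilde X_{reg}$ — applies verbatim, because every ingredient it used about the flow $\omega(t)$ (uniform lower Ricci bound, current convergence to $\tilde f^*\omega_{\tilde\Sigma}$, local $C^0$/$C^{1,\alpha}$ convergence on the regular part, uniform diameter bound and non-collapsing of fibers in the appropriate normalization) has now been established for the continuity-method solution. In particular the diameter bound for $(\tilde X,\omega(t))$ and the identification of the pointed limits of the collapsing fibers are formal consequences of the same estimates, so the Gromov–Hausdorff limit is forced to be $(\tilde X_\infty,\tilde d_\infty)$.

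The main obstacle — the same one that makes Theorem \ref{result} harder than Theorem \ref{result0} — is the lack of a clean convergence on the whole regular base: the product $\tilde\Sigma_{reg}$ is not the regular locus of a semi-ample fibration over a curve, so one cannot directly quote Proposition \ref{observation2}, and near the ``mixed'' singular locus $(S\times\Sigma')\cup(\Sigma\times S')$ one must control how the two collapsing directions interact. This is handled, as in Theorem \ref{result}, by localizing to product bidisks $\Delta\times\Delta'$, using the two-variable analogue of the estimate \eqref{est1.1} for $\omega_{\tilde\Sigma}$ (a tensor-product bound coming from applying Theorem \ref{theorem1} to each factor), and then feeding the resulting local geometry into Cheeger–Colding to pin down the limit; I expect the only genuinely new bookkeeping relative to Theorem \ref{result} to be verifying that the continuity-method estimates degrade no worse than the flow ones near these crossing loci, which follows from Propositions \ref{observation3} and \ref{observation4} exactly as in the elliptic-surface case treated in \cite{ZyZz1}. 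Hence Theorem \ref{result.1} follows, and I would simply write ``the proof is identical to that of Theorem \ref{result}, using Propositions \ref{observation3} and \ref{observation4} in place of Proposition \ref{observation2} and the known convergence properties of the Kähler–Ricci flow, and noting that \eqref{LT0} has a uniform lower Ricci bound.''
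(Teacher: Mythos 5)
Your proposal is correct and follows essentially the same route as the paper: the paper explicitly states that Theorem \ref{result.1} is proved by the same argument as Theorem \ref{result}, substituting the continuity-method estimates (Proposition \ref{facts.1}, Propositions \ref{observation3}--\ref{observation4}, Lemma \ref{L1conv.1}) for their flow counterparts, and noting that \eqref{LT0} has an automatic uniform lower Ricci bound. One small slip: you say the Ricci bound is obtained by ``differentiating'' \eqref{LT0}, but no derivative is taken — the relation $Ric(\omega(t))=\tfrac{1}{t}\omega_0-\tfrac{1+t}{t}\omega(t)\ge-2\omega(t)$ for $t\ge1$ is a direct algebraic rearrangement of the equation itself.
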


The proof of Theorem \ref{result.1} is almost identical as Theorem \ref{result}, see Section \ref{proof} for details.

\subsection{The continuity method: collapsing limits at finite time}
We now discuss a different case where the continuity method will collapse at finite time.
Let $f:X\to Y\subset Z$ be the same as in Setup \ref{setup0} and $\omega_0$ be a K\"ahler metric on $X$ satisfying
\begin{equation}\label{semiample2}
[\omega_0]+2\pi c_1(K_X)=f^*[\chi]
\end{equation}
for some K\"ahler metric $\chi$ on $Z$. Note that condition \eqref{semiample2} implies the generic fiber of $f:X\to Y$ is a smooth Fano manifold and so we may call it a Fano fibration. Consider the following continuity method on $X$ proposed by La Nave and Tian \cite{LT}:
\begin{equation}\label{LT0.1}
\left\{
\begin{aligned}
\omega(t)&=\omega_0-tRic(\omega(t))\\
\omega(0)&=\omega_0,
\end{aligned}
\right.
\end{equation}
Easily, the K\"ahler class $[\omega(t)]$ along the continuity method \eqref{LT0.1} satisfies
\begin{equation}
[\omega(t)]=[\omega_0]+2\pi t c_1(K_X).
\end{equation}
As we have condition \eqref{semiample2}, applying \cite[Theorem 1.1]{LT} gives a smooth solution $\omega(t)$ to the continuity method \eqref{LT0.1} on time interval $[0,1)$. Then $\omega(t)_{t\in[0,1)}$ solving \eqref{LT0.1} satisfies Setup \ref{setup0} with the property
$$[\omega(t)]\to f^*[\chi]$$
as $t\to1$.
\par The goal is to study the limit of $\omega(t)$ as $t\to1$. In general, according to \cite[Conjecture 4.1]{LT}, $(X,\omega(t))$ should converge to some compact metric on $Y$. Some progresses have been made in \cite{ZyZz2}. In particular, a limiting singular K\"ahler metric was constructed in \cite[Section 2]{ZyZz2}. Let's first recall the construction in \cite[Section 2]{ZyZz2}. By the condition \eqref{semiample2} and Yau's theorem \cite{Y}, we fix a smooth positive volume form $\Omega$ on $X$ with
\begin{equation}
\sqrt{-1}\partial\bar\partial\log\Omega=f^*\chi-\omega_0,
\end{equation}
and a closed real $(1,1)$-form $\overline\omega_0$ on $X_{reg}$ such that, restricting to every smooth fiber $X_y$ for $y\in Y\setminus S$, $\overline\omega_0|_{X_y}$ is the unique K\"ahler metric in class $[\omega_0|_{X_y}]$ satisfying $Ric(\overline\omega_0|_{X_y})=\omega_0|_{X_y}$. Then define a function $G$ on $X_{reg}$ by
$$G=\frac{\Omega}{\binom{n}{k}\overline\omega_0^{n-k}\wedge f^*\chi^k},$$
which is constant along every smooth fiber $X_y$ and hence is also a smooth positive function on $Y\setminus S$. Moreover, $G$ satisfies
\begin{equation}
G=\frac{1}{V_0}\frac{f_*\Omega}{\binom{n}{k}\chi^k}
\end{equation}
on $Y\setminus S$, where $V_0:=\int_{X_y}(\omega_0|_{X_y})^{n-k}$, $y\in Y\setminus S$, is a positive constant and
\begin{equation}
0<\delta\le G\in L^{1+\epsilon}(Y,\chi^k)
\end{equation}
on $Y$, where $\delta$ and $\epsilon$ are two positive constants. Now consider the following complex Monge-Amp\`{e}re equation on $Y$:
\begin{equation}\label{equation0.1}
(\chi+\sqrt{-1}\partial\bar\partial\psi)^k=e^{\psi}G\chi^k.
\end{equation}
Thanks to \cite{EGZ,ST12}, \eqref{equation0.1} admits a unique solution $\psi\in PSH(Y,\chi)\cap L^\infty(Y)\cap C^\infty(Y\setminus S)$. Set $\omega_Y:=\chi+\sqrt{-1}\partial\bar\partial\psi$, which is a positive $(1,1)$-current on $Y$ and a smooth K\"ahler metric on $Y\setminus S$. Then the following result is essentially contained in \cite{ZyZz2}.

\begin{prop}\label{observation4.1}\cite{ZyZz2}
Assume Setup \ref{setup0} and $\omega(t)_{t\in[0,1)}$ is the solution to the continuity method \eqref{LT0.1} with property \eqref{semiample2}. Then, as $t\to1$, $\omega(t)\to f^*\omega_Y$  as currents on $X$.
\end{prop}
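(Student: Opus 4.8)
The plan is to reduce the continuity method \eqref{LT0.1} to a scalar complex Monge-Amp\`ere equation on $X$ and then to assemble the a priori estimates of \cite{ZyZz2}; I indicate the main points. Writing $\omega(t)^n=e^{\rho(t)}\Omega$ and using $Ric(\Omega)=-\sqrt{-1}\partial\bar\partial\log\Omega=\omega_0-f^*\chi$, equation \eqref{LT0.1} becomes $\omega(t)=\hat\omega_t+t\sqrt{-1}\partial\bar\partial\rho(t)$ with the reference form $\hat\omega_t:=(1-t)\omega_0+tf^*\chi$, which has class $[\omega(t)]$ and degenerates as $t\to1$ since $f^*\chi$ has rank $k:=dim(Y)<n$ on $X$. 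Thus, with $\varphi(t):=t\rho(t)$, one has $\omega(t)=\hat\omega_t+\sqrt{-1}\partial\bar\partial\varphi(t)$ and
\[
(\hat\omega_t+\sqrt{-1}\partial\bar\partial\varphi(t))^n=e^{\varphi(t)/t}\,\Omega,\qquad \varphi(t)\in PSH(X,\hat\omega_t),\quad t\in(0,1),
\]
with $\varphi(0)=0$. Since on compact subsets of $X_{reg}=f^{-1}(Y\setminus S)$ one has $\hat\omega_t^n\asymp(1-t)^{n-k}\,\omega_0^{n-k}\wedge f^*\chi^k$, the volume collapse is carried by the constant $a(t):=(n-k)\log(1-t)\to-\infty$, so it is convenient to pass to the shifted potential $\tilde\varphi(t):=\varphi(t)-t\,a(t)$, for which $\sqrt{-1}\partial\bar\partial\tilde\varphi(t)=\sqrt{-1}\partial\bar\partial\varphi(t)$ and $\omega(t)=\hat\omega_t+\sqrt{-1}\partial\bar\partial\tilde\varphi(t)$.

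The estimates, uniform on $[0,1)$, are those of \cite{ZyZz2}. The crucial one is the $C^0$-bound $\|\tilde\varphi(t)\|_{L^\infty(X)}\le C$: dividing the Monge-Amp\`ere equation by $\hat\omega_t^n$ turns it into a family of Monge-Amp\`ere equations on $X$ with reference forms $\hat\omega_t$ and right-hand densities bounded in $L^{1+\epsilon}$ (which, through $f_*\Omega/\chi^k$, reduces to the integrability $0<\delta\le G\in L^{1+\epsilon}(Y,\chi^k)$ recorded before \eqref{equation0.1}), so a Ko\l{}odziej / Eyssidieux--Guedj--Zeriahi--type estimate made uniform in $t$, in the spirit of \cite{ST12,EGZ,To10}, applies once the normalizing constant is controlled. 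With the $C^0$-bound in hand, the standard second order (Schwarz-lemma type) estimate for the collapsing equation together with the local bootstrap give, for every $K'\subset\subset X_{reg}$, a uniform $C^\infty(K',\omega_0)$-bound on $\tilde\varphi(t)$; hence along any $t_j\to1$, $\omega(t_j)\to f^*\omega_\infty$ in $C^\infty_{loc}(X_{reg})$, where $\omega_\infty=\chi+\sqrt{-1}\partial\bar\partial u$ on $Y\setminus S$ for the $C^\infty_{loc}(Y\setminus S)$-limit $u$ of the base parts.

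Finally, one identifies the limit. By the $C^0$-bound, $\{\tilde\varphi(t)\}$ is precompact in $L^1(X)$; moreover $\sqrt{-1}\partial\bar\partial_{X_y}(\tilde\varphi(t)|_{X_y})=\omega(t)|_{X_y}-(1-t)\omega_0|_{X_y}\to0$ on every smooth fiber (both $[\omega(t)|_{X_y}]=(1-t)[\omega_0|_{X_y}]$ and $(1-t)[\omega_0|_{X_y}]$ tend to $0$), so the oscillation of $\tilde\varphi(t)$ along fibers tends to $0$ and, as in the descent step of \cite{ST12}, every $L^1$-subsequential limit of $\tilde\varphi(t)$ equals $f^*u$ for some $u\in PSH(Y,\chi)\cap L^\infty(Y)$. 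Pushing the Monge-Amp\`ere equation $\omega(t)^n=(1-t)^{n-k}e^{\tilde\varphi(t)/t}\Omega$ forward to $Y\setminus S$ — using the fiber-volume identity $\int_{X_y}(\omega(t)|_{X_y})^{n-k}=(1-t)^{n-k}V_0$, the $C^\infty_{loc}(X_{reg})$-convergence, and $f_*\Omega=\binom{n}{k}V_0\,G\,\chi^k$ — and dividing by $(1-t)^{n-k}$ before letting $t\to1$, one obtains $(\chi+\sqrt{-1}\partial\bar\partial u)^k=e^{u}G\chi^k$ on $Y\setminus S$; by the uniqueness part of \cite{EGZ,ST12} for \eqref{equation0.1}, $u=\psi$. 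Thus the subsequential limit is unique, $\tilde\varphi(t)\to f^*\psi$ in $L^1(X)$, and since $\hat\omega_t\to f^*\chi$ smoothly, $\omega(t)=\hat\omega_t+\sqrt{-1}\partial\bar\partial\tilde\varphi(t)\to f^*\chi+\sqrt{-1}\partial\bar\partial(f^*\psi)=f^*\omega_Y$ as currents on $X$, which is the assertion. I expect the main obstacle to be the $t\to1$ uniformity of the $C^0$-estimate, and the attendant descent to $Y$, across the singular fibers over $S$ — precisely the technical heart supplied by \cite{ZyZz2} and, in spirit, going back to \cite{ST12,To10}.
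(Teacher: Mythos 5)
Your proof is correct and follows essentially the same route that the paper delegates to \cite{ZyZz2}: reduce \eqref{LT0.1} to the scalar complex Monge--Amp\`ere equation $(\hat\omega_t+\sqrt{-1}\partial\bar\partial\varphi)^n=e^{\varphi/t}\Omega$ with $\hat\omega_t=(1-t)\omega_0+tf^*\chi$, extract a uniform $C^0$-bound on the shifted potential from Ko{\l}odziej/EGZ-type estimates using $G\in L^{1+\epsilon}$, obtain local higher-order control on $X_{reg}$ via Schwarz-lemma estimates, show fiberwise flattening forces the $L^1$-limit to descend to $Y$, and identify the limit with the solution $\psi$ of \eqref{equation0.1}. (One small caveat worth noting: the paper and \cite{ZyZz2} only claim $C^{1,\alpha}_{loc}(X_{reg})$-convergence of the potentials and $C^0$-convergence of the metrics; your assertion of a uniform $C^\infty(K',\omega_0)$-bound is stronger than is asserted or needed — the $L^1$-convergence of potentials already gives convergence as currents, and the descent/identification step can be carried out at the $C^{1,\alpha}$/$C^0$ level.)
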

In fact, \cite{ZyZz2} proved the $L^1$-convergence of K\"ahler potential, which takes place in $C^{1,\alpha}$ for any $\alpha\in(0,1)$ away from singular fibers. We refer reader to \cite{ZyZz2} for more details.
\par Similarly, we also have an analog of Proposition \ref{observation4} as follows.

\begin{prop}\label{observation5}\cite{ZyZz2}
Assume $\omega(t)_{t\in[0,1)}$, the solution of the continuity method \eqref{LT0.1}, satisfies Setup \ref{setup0}. Then for any $K'\subset\subset X_{reg}$, $\omega(t)\to f^*\omega_Y$ in $C^0(K',\omega_0)$-topology as $t\to1$.
\end{prop}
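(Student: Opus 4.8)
The plan is to reduce Proposition \ref{observation5} to the situation already analyzed for the K\"ahler-Ricci flow in \cite{TWY} and for the continuity method \eqref{LT0} in \cite{ZyZz2}, by writing the continuity method \eqref{LT0.1} as a complex Monge-Amp\`ere equation for K\"ahler potentials and then running the same parabolic/elliptic estimates away from the singular fibers. First I would fix a reference family of K\"ahler forms $\hat\omega_t$ in the classes $[\omega(t)] = [\omega_0] + 2\pi t\,c_1(K_X)$, for instance $\hat\omega_t := (1-t)\omega_0 + t\, f^*\chi$, which is a genuine K\"ahler metric on $X$ for $t\in[0,1)$ since both $\omega_0$ and $f^*\chi$ are semipositive with $\omega_0$ strictly positive. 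Writing $\omega(t) = \hat\omega_t + \sqrt{-1}\partial\bar\partial\varphi(t)$, the equation \eqref{LT0.1} becomes, after the usual manipulation using $\sqrt{-1}\partial\bar\partial\log\Omega = f^*\chi - \omega_0$, an equation of the shape
\begin{equation*}
(\hat\omega_t + \sqrt{-1}\partial\bar\partial\varphi(t))^n = c_t\, t^{-n}\, e^{\varphi(t)/t}\,\Omega
\end{equation*}
(up to normalizing constants $c_t$ bounded away from $0$ and $\infty$), which is formally the same type of degenerating Monge-Amp\`ere family treated in \cite{ZyZz2, TWY}; in particular Proposition \ref{observation4.1} already supplies the $C^0$ bound on $\varphi(t)$ and the $C^{1,\alpha}_{loc}(X_{reg})$ convergence $\varphi(t)\to f^*\psi$.

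Next, the core analytic input is a local second-order estimate on $X_{reg}$: for each $K'\subset\subset X_{reg}$ I would establish a uniform bound $C^{-1}(f^*\omega_Y + (1-t)\omega_0) \le \omega(t) \le C(f^*\omega_Y + (1-t)\omega_0)$ on $K'$, where $\omega_Y$ collapses the fiber directions. This is exactly the mechanism of \cite[Theorem 1.1]{ZyZz2} (itself modeled on the Chern-Lu / maximum-principle technique of \cite{TWY}): one picks a cutoff supported in a slightly larger precompact set, applies the maximum principle to a quantity of the form $\log\operatorname{tr}_{\hat\omega}\omega(t) - A\varphi(t) + (\text{cutoff})$ using the fiberwise Ricci-flatness built into $\overline\omega_0$ and the semiample fibration structure, and absorbs error terms using the already-known $C^0$ and gradient bounds. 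Once this two-sided metric equivalence is in hand, parabolicity is not an issue here because \eqref{LT0.1} is elliptic at each fixed $t$; one obtains uniform higher-order (Evans-Krylov and Schauder) estimates for $\varphi(t)$ on compact subsets of $X_{reg}$, locally uniformly in $t$, and therefore the limit of any convergent subsequence of $\omega(t)$ is a closed positive current on $X_{reg}$ agreeing with the $C^{1,\alpha}$ limit $f^*\omega_Y$ from Proposition \ref{observation4.1}. Feeding this back, $\omega(t) - f^*\omega_Y \to 0$ in $C^0(K',\omega_0)$: the fiber directions of $\omega(t)$ shrink to zero (controlled by $(1-t)$ via the metric equivalence and the fact that $f^*\omega_Y$ is degenerate exactly along the fibers), while the horizontal part converges to $f^*\omega_Y$.

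The main obstacle, as in \cite{ZyZz2, TWY}, is the second-order estimate near the singular fibers — but the statement only claims convergence on $K'\subset\subset X_{reg}$, so this is sidestepped: the cutoff function vanishes near $f^{-1}(S)$ and all estimates are genuinely interior on $X_{reg}$. The one point that needs a small amount of care is the degeneration rate in $t$: because $\hat\omega_t$ degenerates along the fibers only in the limit $t\to 1$ (not for $t$ bounded away from $1$), the constant $C$ in the metric equivalence must be shown independent of $t\in[t_0,1)$, which requires tracking the $t$-dependence of the error terms in the maximum-principle argument and using that $t^{-n}e^{\varphi(t)/t}$ stays comparable to $e^{f^*\psi}$ as $t\to1$ — exactly the bookkeeping carried out in \cite{ZyZz2}. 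I would invoke \cite[Theorem 1.1]{ZyZz2} essentially verbatim for this, noting that the only structural differences between \eqref{LT0.1} and the continuity method \eqref{LT0} are harmless rescalings, so that Proposition \ref{observation5} follows from the same argument.
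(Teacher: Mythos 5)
Your proposal matches the paper's own treatment, which simply attributes Proposition~\ref{observation5} to \cite{ZyZz2} and gives no independent argument: reduce \eqref{LT0.1} to a degenerating complex Monge--Amp\`{e}re equation, use the known $C^0$ and local $C^{1,\alpha}$ potential bounds from Proposition~\ref{observation4.1}, obtain a local metric equivalence on $K'\subset\subset X_{reg}$ via a Chern--Lu maximum principle with cutoff, and cite \cite[Theorem~1.1]{ZyZz2} (modelled on \cite{TWY}) for the $C^0$ metric convergence. Two inaccuracies in your sketch are worth fixing, even though your final step (invoking \cite{ZyZz2}) covers them.

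First, the normalization: with $\hat\omega_t=(1-t)\omega_0+t f^*\chi$ and $\sqrt{-1}\partial\bar\partial\log\Omega=f^*\chi-\omega_0$, one gets $\omega(t)^n=e^{\varphi(t)/t}\Omega$ directly, and the factor that captures the volume collapse after you split off a bounded piece is $(1-t)^{n-k}$ (since $\hat\omega_t^n\sim(1-t)^{n-k}\omega_0^{n-k}\wedge f^*\chi^k$), not $t^{-n}$, which stays bounded near $t=1$. Also, in this Fano-fibration setting the reference fiberwise form is $\overline\omega_0$ with $Ric(\overline\omega_0|_{X_y})=\omega_0|_{X_y}$, i.e.\ fiberwise K\"ahler--Einstein, not fiberwise Ricci-flat; the latter is the Calabi--Yau case~\eqref{semiample1}. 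Second, and more substantively, the sentence about uniform Evans--Krylov and Schauder estimates ``locally uniformly in $t$'' would fail as stated: the two-sided bound $C^{-1}\hat\omega_t\le\omega(t)\le C\hat\omega_t$ is an equivalence to a reference that degenerates along the fibers as $t\to1$, so the Monge--Amp\`{e}re operator is only degenerate elliptic with respect to any fixed background metric such as $\omega_0$, and Evans--Krylov/Schauder do not deliver $t$-uniform $C^{2,\alpha}(K',\omega_0)$ bounds. The actual mechanism of \cite{TWY} (and \cite{ZyZz2}) for $C^0(K',\omega_0)$ metric convergence is finer: a rescaling/fiberwise analysis combined with showing $\operatorname{tr}_{\hat\omega_t}\omega(t)\to n$, rather than a naive higher-order bootstrap. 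Since you defer to \cite[Theorem~1.1]{ZyZz2} verbatim the conclusion is unaffected, but the intermediate ``Evans--Krylov'' step should be replaced by a pointer to that machinery.
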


If $dim(Y)=1$, we also have the asymptotics of $\omega_Y$ on $Y$ near singular points.

\begin{thm}\label{theorem1.1}
Assume Setup \ref{setup2} with condition \eqref{semiample2} and $dim(Y)=1$. Let $\omega_Y$ be the positive current obtained by \eqref{equation0.1}. Denote $S=\{s_1,\ldots,s_L\}$ be the critical values of $f$ on $Y$. For a fixed $s_l\in S$, we choose a local chart $(\Delta,s)$ near $s_l$ and identify $\Delta$ with $\{s\in\mathbb{C}||s|\le1\}$ and $s_l$ with $0\in\mathbb{C}$. Denote $\Delta^*=\Delta\setminus\{0\}$. Then, after possibly shrinking $\Delta$, there exist three positive constants $\beta_l\in\mathbb{Q}_{>0}$, $N_l\in\{1,2,\ldots,n\}$ and $C_l\ge1$ such that
\begin{equation}\label{est1.1}
C_l^{-1}|s|^{-2(1-\beta_l)}(-\log|s|)^{N_l-1}\le\frac{\omega_Y}{\sqrt{-1}ds\wedge d\bar s}(s)\le C_l|s|^{-2(1-\beta_l)}(-\log|s|)^{N_l-1}
\end{equation}
holds on $\Delta^*$. Consequently, $(X_1,d_1)$, the metric completion of $(Y\setminus S,\omega_{Y})$, is a compact length metric space and $X_1$ is homeomorphic to $Y$.
\end{thm}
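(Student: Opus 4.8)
The argument parallels the proof of Theorem \ref{theorem1}, the first point being that neither the Fano nature of the fibres nor the normalization $\sqrt{-1}\partial\bar\partial\log\Omega=f^*\chi-\omega_0$ in \eqref{semiample2} changes anything essential. Since $k=\dim Y=1$, equation \eqref{equation0.1} reads $\omega_Y=e^\psi G\chi$ as measures on the Riemann surface $Y$, so on $\Delta^*$
\[
\frac{\omega_Y}{\sqrt{-1}\,ds\wedge d\bar s}(s)=\frac{e^{\psi(s)}}{n\,V_0}\,\frac{f_*\Omega}{\sqrt{-1}\,ds\wedge d\bar s}(s)=\frac{e^{\psi(s)}}{n\,V_0}\int_{X_s}\frac{\Omega}{f^*(\sqrt{-1}\,ds\wedge d\bar s)},
\]
where we used $G=V_0^{-1}\binom{n}{1}^{-1}f_*\Omega/\chi$ and $\binom{n}{1}=n$. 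On $\Delta^*$ the potential $\psi$ is smooth and, being in $L^\infty(Y)$, bounded above and below, so $e^\psi$ is a uniformly bounded positive factor; hence \eqref{est1.1} is equivalent to a two-sided estimate $C_l^{-1}|s|^{-2(1-\beta_l)}(-\log|s|)^{N_l-1}\le\int_{X_s}\Omega/f^*(\sqrt{-1}\,ds\wedge d\bar s)\le C_l|s|^{-2(1-\beta_l)}(-\log|s|)^{N_l-1}$ near $s_l=0$. Moreover any two smooth positive volume forms on $X$ have ratio bounded above and below, so this asymptotic depends only on the degeneration of $f$ over $s_l$; it is precisely the fibre-integral estimate established in the proof of Theorem \ref{theorem1}, and the Fano setting contributes nothing new.

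To prove that estimate I would argue as in Theorem \ref{theorem1}: after a finite base change $s=w^m$, take a log resolution of the resulting family so that its total space is smooth and its central fibre is a simple normal crossings divisor $\sum_i a_iE_i$, and record the vanishing orders $b_i$ of the pulled-back volume form along the $E_i$, which are governed by the discrepancies. A partition of unity subordinate to the strata of the SNC fibre, in local coordinates in which $w$ is a monomial, reduces the relevant fibre integral to a finite sum of model monomial integrals over polydisc slices $\{\text{monomial}=w\}$, each of which evaluates explicitly to a constant times $|w|^{2\gamma}(-\log|w|)^{p-1}$ with $\gamma\in\mathbb{Q}$. Extracting the dominant term gives $|w|^{2\gamma_l}(-\log|w|)^{N_l-1}$ with $\gamma_l\in\mathbb{Q}$ read off from the $a_i,b_i$ and with $N_l$ the maximal number of $E_i$ realizing the dominant exponent through a common point, a combinatorial quantity bounded by $\dim X=n$; undoing $s=w^m$ turns $\gamma_l$ into $\beta_l\in\mathbb{Q}_{>0}$, the positivity holding because the governing log-canonical-type threshold of the degeneration is positive (as $X$ is smooth). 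The main obstacle is exactly this computation: one must check that the model integrals reproduce the normalized exponent $|s|^{-2(1-\beta_l)}$ with $\beta_l>0$ and logarithmic power $N_l-1\in\{0,\dots,n-1\}$, and --- for a two-sided rather than one-sided bound --- that summation over the partition of unity produces no cancellation, which is guaranteed by $\Omega>0$ (every model integral being positive with comparable constants); one also checks that the leading exponents are independent of the base change and resolution chosen.

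Finally, for the concluding assertion --- that the metric completion $(X_1,d_1)$ is a compact length metric space homeomorphic to $Y$ --- I argue exactly as in Theorem \ref{theorem1}. On $Y\setminus S$, $\omega_Y$ is a smooth K\"ahler, hence length, metric on an open Riemann surface. Near $s_l$, \eqref{est1.1} gives length element $\asymp|s|^{-(1-\beta_l)}(-\log|s|)^{(N_l-1)/2}|ds|$; since $\beta_l>0$ the radial integral $\int_0^1\rho^{\beta_l-1}(-\log\rho)^{(N_l-1)/2}\,d\rho$ is finite, so $s_l$ lies at finite $\omega_Y$-distance, while the $\omega_Y$-length of the circle $\{|s|=r\}$ is $\asymp r^{\beta_l}(-\log r)^{(N_l-1)/2}\to0$; hence every sequence tending to $s_l$ is Cauchy and all such sequences are equivalent, so the metric completion adds exactly one point over each $s_l$ and $X_1=Y$ as a set. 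Then $(X_1,d_1)$ is a length space (the completion of one), it is complete and totally bounded --- the complement in $Y$ of small disks around $S$ is a compact Riemannian manifold with boundary, and the cusp neighborhoods have finite diameter and are totally bounded because \eqref{est1.1} makes $\omega_Y$ there comparable to a flat cone metric up to a logarithmic factor --- hence compact, and the tautological continuous bijection $X_1\to Y$ onto the Hausdorff space $Y$ is therefore a homeomorphism. This yields $X_1\cong Y$ and completes the proof.
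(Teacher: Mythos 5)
Your argument is correct and is essentially the paper's proof: reduce the asymptotics of $\omega_Y$ to those of $f_*\Omega$ via $\omega_Y=\tfrac{1}{nV_0}e^{\psi}f_*\Omega$ and the boundedness of $\psi$, invoke the fibre-integral estimate of Theorem~\ref{prop3.1}, and deduce compactness and the homeomorphism to $Y$ from the two-sided bound. The only cosmetic deviation is the finite base change $s=w^m$ you interpose before taking a log resolution: the paper instead resolves $\hat X\to\Sigma$ directly by Hironaka without any base change, keeping track of the fibre multiplicities $a_j$ and discrepancies $k_j$ in the local monomial model integrals, which avoids having to handle the disconnected generic fibre that a base change of a connected-fibre family would introduce.
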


Finally, using Theorem \ref{theorem1.1}, we can determine Gromov-Hausdorff limit of the continuity method \eqref{LT0.1} on Fano fibration with one dimensional base.

\begin{thm}\label{result0.2}
Assume $\omega(t)_{t\in[0,1)}$, a solution to the continuity method \eqref{LT0.1} on $X$, satisfies Setup \ref{setup0} and $dim(Y)=1$. Then, as $t\to1$, $(X,\omega(t))$ converges in Gromov-Hausdorff topology to $(X_1,d_1)$, the metric completion of $(Y\setminus S,\omega_Y)$. Here $\omega_Y$ is solved from \eqref{equation0.1}.
\end{thm}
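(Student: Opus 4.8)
The plan is to run the argument used for Theorems \ref{result0} and \ref{result0.1} — itself the scheme introduced for minimal elliptic surfaces in \cite{ZyZz1} — with the three analytic inputs replaced by their finite-time, Fano-fibration counterparts already available above: Theorem \ref{theorem1.1} (the two-sided asymptotics \eqref{est1.1} of $\omega_Y$ near each $s_l$, and hence the fact that $(X_1,d_1)$ is a compact length space homeomorphic to $Y$), Proposition \ref{observation4.1} ($\omega(t)\to f^*\omega_Y$ as currents on $X$ as $t\to 1$), and Proposition \ref{observation5} ($\omega(t)\to f^*\omega_Y$ in $C^0_{loc}(X_{reg})$ as $t\to 1$). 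Two structural facts are used throughout: along \eqref{LT0.1} one has $Ric(\omega(t))=\tfrac1t(\omega_0-\omega(t))\ge-\tfrac1t\omega(t)$, so the Ricci curvature of $\omega(t)$ is uniformly bounded below for $t$ bounded away from $0$; and $\int_X\omega(t)^n\to\int_X(f^*\chi)^n=0$ since $\dim Y=1<n$, so \eqref{LT0.1} collapses. Thus the situation is formally identical to that of Theorem \ref{result0}, with the lower Ricci bound now coming for free rather than by hypothesis.

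First I would establish a uniform diameter bound $\mathrm{diam}(X,\omega(t))\le C$. Writing $\Delta_{l,r}=\{|s|<r\}$ for the coordinate disk about $s_l$, the estimate \eqref{est1.1} shows that the $\omega_Y$-distance from $s_l$ to $\partial\Delta_{l,r}$ is comparable to $r^{\beta_l}(-\log r)^{(N_l-1)/2}$, which tends to $0$ as $r\to 0$; in particular $\mathrm{diam}(Y\setminus S,\omega_Y)<\infty$ and the collars around $S$ are $\omega_Y$-small. Away from $S$, Proposition \ref{observation5} gives $\omega(t)\to f^*\omega_Y$ uniformly on $f^{-1}(Y\setminus\bigcup_l\Delta_{l,r})$, so this region has $\omega(t)$-diameter $\le\mathrm{diam}(Y\setminus S,\omega_Y)+o(1)$ and its fibres have $\omega(t)$-diameter $o(1)$; the remaining point is to bound the $\omega(t)$-diameter of each collar $f^{-1}(\Delta_{l,2r})$ by a quantity $\varepsilon(r)$ with $\lim_{r\to 0}\limsup_{t\to1}\varepsilon(r)=0$, which is obtained by a Bishop-Gromov / segment-type comparison on the collapsing region built on the uniform lower Ricci bound together with \eqref{est1.1}. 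Summing over the finitely many $s_l$ yields the diameter bound.

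Next I would exhibit, for $t$ near $1$, an $\varepsilon_t$-Gromov-Hausdorff approximation $\Phi_t:=\iota\circ f:(X,\omega(t))\to(X_1,d_1)$ with $\varepsilon_t\to 0$, where $\iota:(Y\setminus S,\omega_Y)\hookrightarrow(X_1,d_1)$ is the completion embedding. Its image is $\varepsilon_t$-dense because $f$ is onto and, by the collar estimate, every point of $X$ lies within $o(1)$ of $f^{-1}(Y\setminus S)$. For the distance distortion $|d_t(x,x')-d_1(\Phi_t(x),\Phi_t(x'))|\le\varepsilon_t$: the inequality $d_1(\Phi_t(x),\Phi_t(x'))\le d_t(x,x')+\varepsilon_t$ follows by pushing a minimizing $\omega(t)$-geodesic down to $Y$, using $f^*\omega_Y\le(1+\varepsilon_t)\omega(t)$ on $f^{-1}(Y\setminus\bigcup_l\Delta_{l,r})$ from Proposition \ref{observation5} and absorbing the $\omega_Y$-small pieces that run through the collars; the reverse inequality $d_t(x,x')\le d_1(\Phi_t(x),\Phi_t(x'))+\varepsilon_t$ follows by lifting an almost-minimizing $\omega_Y$-path in $Y\setminus S$ (which, by \eqref{est1.1}, may be chosen to stay away from $S$) horizontally into $X_{reg}$ with $\omega(t)$-length $\le(1+\varepsilon_t)$ times its $\omega_Y$-length, and then joining the endpoints inside their collapsing fibres at cost $o(1)$. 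Since $(X_1,d_1)$ is compact by Theorem \ref{theorem1.1} and the approximation is uniform in $t$, this gives $(X,\omega(t))\to(X_1,d_1)$ in Gromov-Hausdorff topology as $t\to 1$.

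The main obstacle is the geometry over the singular fibres: since the right-hand side of \eqref{est1.1} blows up when $\beta_l<1$, the base part of $\omega_Y$ is large near $s_l$ while the horizontal part of $\omega(t)$ stays bounded, so the naive comparison $f^*\omega_Y\le C\omega(t)$ fails on the collars and both the pushing-down and the lifting step must be routed around them; making this legitimate requires the uniform-in-$t$ collar bound $\varepsilon(r)$ above, which is exactly where Theorem \ref{theorem1.1} and the automatic lower Ricci bound for \eqref{LT0.1} are combined. Reproducing this quantitative control — carried out in the surface case in \cite{ZyZz1}, and now available in all dimensions thanks to Theorem \ref{theorem1.1} — is the technical heart of the proof.
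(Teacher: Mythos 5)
Your proposal follows exactly the scheme the paper uses: it notes that the proofs of Theorems \ref{result0}, \ref{result0.1} and \ref{result0.2} are identical, replaces Theorem \ref{theorem1} and Proposition \ref{observation2} with their finite-time Fano counterparts (Theorem \ref{theorem1.1}, Propositions \ref{observation4.1} and \ref{observation5}), uses the automatic lower Ricci bound $Ric(\omega(t))=\tfrac1t(\omega_0-\omega(t))\ge-\tfrac1t\omega(t)$ in place of the hypothesis, and then runs the diameter-bound/collar-bound/GH-approximation argument of Section \ref{proof.1}. This matches the paper's proof in substance and structure.
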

The proof of Theorem \ref{result0.2} is the same as Theorems \ref{result0} and \ref{result0.1}, see Section \ref{proof.1} for details.

\subsection{Organization of this paper} The remaining part of this paper is organized as follows. In Section \ref{estsect}, we collect some properties of the K\"ahler-Ricci flow and the continuity method. In Section \ref{sectgke}, we prove a general result Theorem \ref{prop3.1}, which implies Theorems \ref{theorem1} and \ref{theorem1.1}. In Section \ref{proof.1}, we prove Theorems \ref{result0}, \ref{result0.1} and \ref{result0.2}. In Section \ref{proof}, we prove Theorems \ref{result} and \ref{result.1}.

\section{Properties of the K\"ahler-Ricci flow and the continuity method}\label{estsect}
We collect some necessary properties of the K\"ahler-Ricci flow and the continuity method.

\subsection{The K\"ahler-Ricci flow}\label{krfsub}Assume we are given a solution $\omega(t)_{t\in[0,\infty)}$ to the K\"ahler-Ricci flow \eqref{KRF0} on $X$ satisfying Setup \ref{setup0}. We first reduce the K\"ahler-Ricci flow to a parabolic complex Monge-Amp\`{e}re equation. Let $\Omega$ be the same smooth positive volume form on $X$ with $\sqrt{-1}\partial\bar\partial\log\Omega=f^{*}\chi$ as in subsection \ref{KRFsubsect} and set $\omega_t:=e^{-t}\omega_0+(1-e^{-t}) f^*\chi$ and $\omega(t)=\omega_t+\sqrt{-1}\partial\bar\partial\varphi(t)$. Then we reduce K\"ahler-Ricci flow \eqref{KRF0} to
\begin{equation}\label{conti.eq}
\left\{
\begin{aligned}
\dot\varphi(t)&=\log\frac{e^{(n-k)t}(\omega_t+\sqrt{-1}\partial\bar\partial\varphi(t))^{n}}{\Omega}-\varphi(t)\\
\varphi(0)&=0.
\end{aligned}
\right.
\end{equation}
Thanks to \cite{FZ,Gi,ST06,ST12,ST4,SW13,TWY}, we have the following
\begin{prop}\label{facts}
Assume $\omega(t)_{t\in[0,\infty)}$, a solution to the K\"ahler-Ricci flow \eqref{KRF0} on $X$, satisfies Setup \ref{setup0}. Let $\varphi(t)$ be the solution to \eqref{conti.eq} on $X$ and hence $\omega(t)=\omega_t+\partial\bar\partial\varphi(t)$ is the solution to \eqref{KRF0}. There exists a uniform constant $C\ge1$ such that
\begin{itemize}
\item[(1)] $|\varphi|+|\dot{\varphi}|\le C$, and hence $C^{-1}e^{-(n-k)t}\Omega\le\omega(t)^{n}\le Ce^{-(n-k)t}\Omega$, on $X\times[0,\infty)$;
\item[(2)] $\omega(t)\ge C^{-1}f^*\chi$ on $X\times[0,\infty)$;
\end{itemize}
Moreover, given any $K\subset\subset X_{reg}$, there exist two constants $C\ge1$ and $T>0$ such that for all $t\ge T$,
\begin{itemize}
\item[(3)] $C^{-1}\omega_t\le\omega(t)\le C\omega_t$ on $K$;
\item[(4)] Assume Setup \ref{setup1} or Setup \ref{setup2}, we have $\varphi(t)\to f^*\psi$ in $L^1(X,\Omega)\cap C^{1,\alpha}_{loc}(X_{reg})$-topology, for any $\alpha\in(0,1)$.
\end{itemize}
\end{prop}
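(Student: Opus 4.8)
This proposition assembles estimates that are by now standard for the K\"ahler--Ricci flow on a manifold whose canonical class satisfies \eqref{semiample1}; essentially all of the analytic input is contained in \cite{ST06,ST12,FZ,TWY} (see also \cite{Gi,ST4,SW13}), and the plan is to indicate how each item is read off from the parabolic complex Monge--Amp\`ere equation \eqref{conti.eq} together with these sources. Concretely: obtain the scalar bounds in (1) from the maximum principle; deduce (2) from a parabolic Schwarz-type trace estimate against the base metric; quote the collapsing second-order estimate on compact subsets of $X_{reg}$ for (3); and bootstrap the known $L^1$-convergence of potentials to $C^{1,\alpha}_{loc}$-convergence on $X_{reg}$ for (4).

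For (1), the bounds $|\varphi|+|\dot\varphi|\le C$ are the maximum-principle estimates of \cite{ST06,ST12} (see also \cite{Gi,ST4,SW13}) for \eqref{conti.eq}: the relevant geometric input is that $[\omega_t]$ is K\"ahler for every finite $t$ with $\omega_t^n\le Ce^{-(n-k)t}\Omega$ uniformly on $X$ (because $(f^*\chi)^j=0$ for $j>k$, so the leading term of $\omega_t^n$ is $\binom{n}{k}e^{-(n-k)t}(1-e^{-t})^k\,\omega_0^{n-k}\wedge(f^*\chi)^k$), and the lower bound for $\varphi$ uses the twisted K\"ahler--Einstein potential $\psi$ from \eqref{equation0} as a barrier. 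The volume estimates in (1) are then immediate from the rewriting $\omega(t)^n=e^{-(n-k)t}e^{\dot\varphi+\varphi}\Omega$ of \eqref{conti.eq}. For (2), one applies a parabolic Schwarz lemma to $u:=\mathrm{tr}_{\omega(t)}(f^*\chi)$ — legitimate since $f^*\chi$ is pulled back from the fixed K\"ahler metric $\chi$ on $Y$, hence has bounded bisectional curvature — obtaining $\left(\partial_t-\Delta_{\omega(t)}\right)\log u\le Cu$ on $\{u>0\}$, and the maximum principle applied to $\log u-A\varphi$ with $A\gg1$ (using (1)) gives $u\le C$ on $X\times[0,\infty)$; since $f^*\chi\ge0$ has constant rank $k$, simultaneous diagonalization converts $u\le C$ into $\omega(t)\ge C^{-1}f^*\chi$, which is (2). (Alternatively, (2) is proved directly in \cite{ST12}.)

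For (3), fix $K\subset\subset X_{reg}$. There the map $f$ is a holomorphic submersion and $\omega_0^{n-k}\wedge(f^*\chi)^k>0$, so $C^{-1}e^{-(n-k)t}\Omega\le\omega_t^n\le Ce^{-(n-k)t}\Omega$ on $K$; the upper bound $\omega(t)\le C\omega_t$ on $K$ for $t\ge T$ is the local collapsing estimate of \cite{FZ,TWY}, obtained via a parabolic Yau/Schwarz estimate relative to the semi-Ricci-flat form $\omega_{SRF}$ together with fiberwise Ricci-flatness. The reverse inequality is then elementary: diagonalizing $\omega_t$ and $\omega(t)$ simultaneously, the eigenvalues $\mu_i$ of $\omega(t)$ with respect to $\omega_t$ satisfy $\mu_i\le C$ and $\prod_i\mu_i=\omega(t)^n/\omega_t^n\ge C^{-1}$ on $K$ by (1), hence $\mu_i\ge C^{-n}$; this gives (3). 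For (4), under Setup \ref{setup1} the $L^1(X,\Omega)$-convergence $\varphi(t)\to f^*\psi$ is \cite[Propositions 5.3, 5.4]{ST12}, and under Setup \ref{setup2} it follows from the maximum-principle argument behind Proposition \ref{observation2}. To upgrade on $X_{reg}$: for $K\subset\subset X_{reg}$, combining (3) with $\omega_t\le C\omega_0$ gives $-C\omega_0\le\sqrt{-1}\partial\bar\partial\varphi(t)\le C\omega_0$ near $K$, so $|\Delta_{\omega_0}\varphi(t)|\le C$ there; together with the uniform $L^1$-bound from (1), interior elliptic ($L^p$/Schauder) estimates bound $\varphi(t)$ in $C^{1,\alpha}(K,\omega_0)$ for every $\alpha\in(0,1)$, and the $L^1$-convergence forces every subsequential limit to equal $f^*\psi$. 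Hence $\varphi(t)\to f^*\psi$ in $C^{1,\alpha}_{loc}(X_{reg})$.

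I expect the main obstacle to be the collapsing upper bound $\omega(t)\le C\omega_t$ on compact subsets of $X_{reg}$ used in (3): this is the only ingredient that is not reducible to a maximum principle or a linear-algebra identity, since it genuinely exploits the semi-Ricci-flat geometry of the Calabi--Yau fibration, and I would invoke \cite{FZ,TWY} for it rather than reprove it. A secondary point worth recording is that, because Setup \ref{setup0} does not assume $[\chi]$ rational, one should observe (as in the discussion around \eqref{equation0}, via \cite{DP04}) that both the $L^\infty$-estimate behind the construction of $\omega_Y$ and the potential estimates of \cite{ST12} remain valid without rationality, which is what makes (1)--(3) hold in the stated generality.
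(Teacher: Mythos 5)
Your proposal is correct and follows essentially the same approach as the paper, whose own proof is a bare citation to \cite{ST06,ST12,ST4} for (1)--(2), to \cite{ST06,FZ} for (3), and to \cite{ST06,ST12,FZ} for (4); your sketch of the maximum principle, the parabolic Schwarz lemma, the collapsing $C^2$-estimate plus the elementary volume lower bound, and the elliptic bootstrap from $L^1$ to $C^{1,\alpha}_{loc}$ is precisely the standard content behind those citations. (Two minor imprecisions: the Schwarz-lemma inequality should read $(\partial_t-\Delta_{\omega(t)})\log u\le Cu+C$ rather than $\le Cu$, and the constant-rank hypothesis is not needed when passing from $\mathrm{tr}_{\omega(t)}(f^*\chi)\le C$ to $\omega(t)\ge C^{-1}f^*\chi$, since semipositivity of $f^*\chi$ alone suffices.)
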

\begin{proof}
Items (1) and (2) are contained in \cite{ST06,ST12,ST4}. Item (4) is proved in \cite{ST06} when $n-k=1$ and in \cite{FZ} when $n-k\ge2$. Item (4) is proved in \cite{ST06,ST12,FZ}.

\end{proof}

\par We also need the following
\begin{lem}\label{L1conv}
Assume $\omega(t)_{t\in[0,\infty)}$, a solution to the K\"ahler-Ricci flow \eqref{KRF0} on $X$, satisfies Setup \ref{setup1} or Setup \ref{setup2}. Then, as $t\to\infty$, $e^{\dot\varphi(t)+\varphi(t)}\to e^{f^*\psi}$ in $L^1(X,\Omega)$-topology.
\end{lem}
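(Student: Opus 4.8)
The plan is to deduce the $L^1$-convergence of $e^{\dot\varphi(t)+\varphi(t)}$ from the parabolic equation \eqref{conti.eq} together with the estimates already collected in Proposition \ref{facts}. Exponentiating the evolution equation for $\varphi$ gives the pointwise identity
\begin{equation}\label{expeq}
e^{\dot\varphi(t)+\varphi(t)}=\frac{e^{(n-k)t}\omega(t)^n}{\Omega}=\frac{e^{(n-k)t}(\omega_t+\sqrt{-1}\partial\bar\partial\varphi(t))^n}{\Omega}
\end{equation}
on $X\times[0,\infty)$. By Proposition \ref{facts}(1), the right-hand side of \eqref{expeq} is uniformly bounded above and below by positive constants, so $\{e^{\dot\varphi(t)+\varphi(t)}\}_{t\ge0}$ is a family of nonnegative functions uniformly bounded in $L^\infty(X,\Omega)$, hence equi-integrable. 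Therefore it suffices to prove convergence in measure, or equivalently $C^0_{loc}(X_{reg})$-convergence to $e^{f^*\psi}$ on compact subsets of $X_{reg}$, since $X\setminus X_{reg}$ has zero $\Omega$-measure. On a fixed $K\subset\subset X_{reg}$, Proposition \ref{facts}(4) gives $\varphi(t)\to f^*\psi$ in $C^{1,\alpha}_{loc}(X_{reg})$, so it only remains to control $\dot\varphi(t)$ on $K$.

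For this I would use the geometric interpretation of the right-hand side of \eqref{expeq}. On $K\subset\subset X_{reg}$ one has, by Proposition \ref{facts}(3), the two-sided bound $C^{-1}\omega_t\le\omega(t)\le C\omega_t$ for $t\ge T$; combined with the collapsing structure $\omega_t=e^{-t}\omega_0+(1-e^{-t})f^*\chi$ and the fact that $\omega(t)\to f^*\omega_Y$ in $C^0(K,\omega_0)$ (Theorem of \cite{TWY}, stated before Proposition \ref{observation2}, in Setup \ref{setup1}; Proposition \ref{observation2} itself in Setup \ref{setup2}), one computes the collapsed Monge-Amp\`ere quantity directly. Writing $\omega(t)=\omega_{SRF}|_{X_y}+\text{(horizontal part converging to }f^*\omega_Y)+o(1)$ in the usual way on $X_{reg}$, the ratio $e^{(n-k)t}\omega(t)^n/\Omega$ converges locally uniformly to
\begin{equation}\label{limitratio}
\frac{\binom{n}{k}\omega_{SRF}^{n-k}\wedge f^*\omega_Y^k}{\Omega}=\frac{\binom{n}{k}\omega_{SRF}^{n-k}\wedge f^*\omega_Y^k}{F\binom{n}{k}\omega_{SRF}^{n-k}\wedge f^*\chi^k}=\frac{(\omega_Y/\chi)^k}{F}=\frac{e^\psi F\chi^k/\chi^k}{F}=e^{f^*\psi},
\end{equation}
using the definition of $F$ and the Monge-Amp\`ere equation \eqref{equation0} that $\omega_Y$ satisfies on $Y\setminus S$. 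Thus $e^{\dot\varphi(t)+\varphi(t)}\to e^{f^*\psi}$ uniformly on each $K\subset\subset X_{reg}$; together with the equi-integrability from Proposition \ref{facts}(1) and the dominated convergence theorem, this upgrades to $L^1(X,\Omega)$-convergence.

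Alternatively, and perhaps more cleanly, once $\varphi(t)\to f^*\psi$ in $C^{1,\alpha}_{loc}(X_{reg})$ is known, one can extract the asymptotics of $\dot\varphi(t)$ on $X_{reg}$ directly from \eqref{conti.eq} by analyzing $e^{(n-k)t}(\omega_t+\sqrt{-1}\partial\bar\partial\varphi(t))^n$: the $C^{1,\alpha}_{loc}$-control is not by itself enough to pass to the limit inside the second derivatives, so one genuinely needs the $C^0$-estimate $\omega(t)\to f^*\omega_Y$ on $X_{reg}$ from \cite{TWY} (resp. Proposition \ref{observation2}) — that, I expect, is the step doing the real work. The only mild subtlety is that the convergence \eqref{limitratio} is stated on $Y\setminus S$ while the claimed $L^1$-convergence is on all of $X$; this is handled precisely because $e^{\dot\varphi+\varphi}$ is uniformly bounded and $f^{-1}(S)$ is $\Omega$-null, so no estimate near the singular fibers is needed. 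Putting the pieces together gives the lemma.
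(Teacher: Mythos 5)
Your overall strategy is the same as the paper's: pass from local uniform convergence on $X_{reg}$ to $L^1(X,\Omega)$-convergence, using the uniform bound $|\dot\varphi|+|\varphi|\le C$ from Proposition \ref{facts}(1) for equi-integrability of the exponentials, and the fact that $X\setminus X_{reg}$ is $\Omega$-null. The paper obtains the key local uniform convergence $\dot\varphi(t)+\varphi(t)\to f^*\psi$ in $L^\infty(K)$ for $K\subset\subset X_{reg}$ by citing \cite[Lemma 3.2(iv)]{TWY} directly; your computation in \eqref{limitratio}, showing that the would-be limit must equal $e^{f^*\psi}$ once it exists, is correct and is a nice reading of the Monge-Amp\`ere equation \eqref{equation0}.

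The gap lies in how you propose to establish that local uniform convergence. You invoke only the $C^0$-convergence $\omega(t)\to f^*\omega_Y$ on compacts of $X_{reg}$ (TWY Theorem 1.2 / Proposition \ref{observation2}) together with Proposition \ref{facts}(3). But $C^0$-convergence to the degenerate limit $f^*\omega_Y$ only controls $\omega(t)$ up to an $o(1)$ error measured against $\omega_0$, while the fiber directions of $\omega(t)$ are of size $O(e^{-t})$; that $o(1)$ error entirely swamps the fiber components, so nothing follows about the limit of $e^{(n-k)t}\omega(t)^n$ from this alone. (For instance, $\omega(t)=e^{-t}(1+\epsilon(t))\omega_{SRF}+f^*\omega_Y$ with $\epsilon(t)$ bounded but non-convergent would still satisfy $\omega(t)\to f^*\omega_Y$ in $C^0$ and the two-sided bound of Proposition \ref{facts}(3), yet $e^{(n-k)t}\omega(t)^n$ would not converge.) The decomposition ``$\omega(t)=\omega_{SRF}|_{X_y}+\text{horizontal}+o(1)$'' you write down --- which really should be $\omega(t)=e^{-t}\omega_{SRF}(1+o(1))+f^*\omega_Y+o(1)$ --- silently assumes the sharper fiberwise asymptotics $e^t\omega(t)|_{X_y}\to\omega_{SRF}|_{X_y}$, which is a separate (and stronger) statement than the $C^0$-convergence you cite. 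In TWY these finer asymptotics do appear, and indeed their Lemma 3.2(iv) packages exactly the conclusion you need; the paper sidesteps the whole issue by citing that lemma directly. To close your argument, either cite \cite[Lemma 3.2(iv)]{TWY} as the paper does, or cite the fiberwise part of TWY's results explicitly and then carry out the determinant computation for the Schur complement rigorously.
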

\begin{proof}
Firstly, recall that $|\dot\varphi|+|\varphi|+|f^*\psi|$ is uniformly bounded on $X\times[0,\infty)$. Moreover, according to \cite[Lemma 3.2(iv)]{TWY}, for any $K\subset\subset X_{reg}$, $\dot\varphi(t)+\varphi(t)\to f^*\psi$ in $L^{\infty}(K)$-topology (its proof needs $L^1$-convergence of K\"ahler potentials in above item (4)). Then we can easily use an elementary argument to conclude this lemma.
\end{proof}

\subsection{The continuity method}
As the K\"ahler-Ricci flow case, we first reduce the continuity method \eqref{LT0} to a complex Monge-Amp\`{e}re equation. For convenience, we consider a reparametrization of continuity method \eqref{LT0} as follows;
\begin{equation}\label{lt1}
\left\{
\begin{aligned}
e^t\omega(t)&=\omega_0-(e^t-1)Ric(\omega(t))\\
\omega(0)&=\omega_0.
\end{aligned}
\right.
\end{equation}

Assume $\omega(t)_{t\in[0,\infty)}$ satisfies Setup \ref{setup0}. We use the same notation $\Omega,\omega_t$ as in subsection \ref{krfsub} and set $\omega(t)=\omega_t+\sqrt{-1}\partial\bar\partial\varphi(t)$, then \eqref{lt1} can be reduced to the following complex Monge-Amp\`{e}re equation

\begin{equation}\label{LT2}
\left\{
\begin{aligned}
\varphi(t)&=(1-e^{-t})\log\frac{e^{(n-k)t}(\omega_t+\sqrt{-1}\partial\bar\partial\varphi(t))^n}{\Omega}\\
\varphi(0)&=0.
\end{aligned}
\right.
\end{equation}

The following is an analog of Proposition \ref{facts} for the continuity method \eqref{LT2}.
\begin{prop}\label{facts.1}
Assume $\omega(t)_{t\in[0,\infty)}$, a solution to the continuity method \eqref{lt1} on $X$, satisfies Setup \ref{setup0}. Let $\varphi(t)$ be the solution to \eqref{LT2} on $X$ and hence $\omega(t)=\omega_t+\partial\bar\partial\varphi(t)$ is the solution to \eqref{lt1}. There exists a uniform constant $C\ge1$ such that
\begin{itemize}
\item[(1)] $|\varphi|\le C$, and hence $C^{-1}e^{-(n-k)t}\Omega\le\omega(t)^{n}\le Ce^{-(n-k)t}\Omega$, on $X\times[1,\infty)$;
\item[(2)] $\omega(t)\ge C^{-1}f^*\chi$ on $X\times[1,\infty)$;
\end{itemize}
Moreover, given any $K\subset\subset X_{reg}$, there exist two constants $C\ge1$ and $T>0$ such that for all $t\ge T$,
\begin{itemize}
\item[(3)] $C^{-1}\omega_t\le\omega(t)\le C\omega_t$ on $K$;
\item[(4)] $\varphi(t)\to f^*\psi$ in $L^1(X,\Omega)\cap C^{1,\alpha}_{loc}(X_{reg})$-topology, for any $\alpha\in(0,1)$.
\end{itemize}
\end{prop}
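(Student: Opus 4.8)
The plan is to establish, for the elliptic complex Monge-Amp\`ere equation \eqref{LT2}, the same chain of a priori estimates that is by now standard for the K\"ahler-Ricci flow (\cite{ST06,ST12,ST4,FZ,TWY}), in the form adapted to the continuity method in \cite{LT,ZyZz1,ZyZz2}; a pleasant feature is that \eqref{LT2} carries no time derivative, so no analogue of the $\dot\varphi$-estimates of the flow is needed. Two structural facts are used throughout: since $f^*\chi$ has rank $\le k$ on $X$ one has $(f^*\chi)^{k+1}=0$, whence $e^{(n-k)t}\omega_t^n=\binom{n}{k}(1-e^{-t})^k\,\omega_0^{n-k}\wedge f^*\chi^k+O(e^{-t})$ and therefore $e^{(n-k)t}[\omega_t]^n\to\binom{n}{k}[\omega_0]^{n-k}\cdot f^*[\chi]^k>0$; and $0<\delta\le F\in L^{1+\epsilon}(Y,\chi^k)$.

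For items (1) and (2): I would first obtain $\varphi\le C$ on $X\times[1,\infty)$ by the maximum principle, since at a maximum of $\varphi(\cdot,t)$ one has $\omega(t)^n\le\omega_t^n\le Ce^{-(n-k)t}\Omega$, so \eqref{LT2} gives $\varphi\le(1-e^{-t})\log C\le C$. The lower bound $\varphi\ge -C$ is the first genuinely delicate point; it follows from a Ko\l odziej-type $L^\infty$-estimate for \eqref{LT2} (\cite{EGZ,ST12}, see also \cite{LT,ZyZz1,ZyZz2}), the integrability $F\in L^{1+\epsilon}(Y,\chi^k)$ ensuring that the resulting $C^0$-bound remains uniform in $t$ despite the collapsing of $\omega_t$. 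Substituting $|\varphi|\le C$ into \eqref{LT2} then yields $C^{-1}e^{-(n-k)t}\Omega\le\omega(t)^n\le Ce^{-(n-k)t}\Omega$, which is (1). For (2), I would run a Chern-Lu (Aubin-Yau) second-order computation for $\log\mathrm{tr}_{\omega(t)}(f^*\chi)-A\varphi$, suitably regularized where $f^*\chi$ degenerates: equation \eqref{lt1} gives $Ric(\omega(t))\ge-\frac{1}{1-e^{-t}}\omega(t)\ge-C\omega(t)$ for $t\ge1$, and together with $\omega_t\ge\frac12 f^*\chi$ and $|\varphi|\le C$ the maximum principle forces $\mathrm{tr}_{\omega(t)}(f^*\chi)\le C$, i.e. $\omega(t)\ge C^{-1}f^*\chi$ on $X$; this is precisely the argument carried out for the present continuity method in \cite{LT,ZyZz1,ZyZz2}.

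For items (3) and (4): the crux of (3) is the partial second-order estimate $\omega(t)\le C\omega_t$ on a fixed $K\subset\subset X_{reg}$, which I would obtain by the maximum principle applied to $\log\mathrm{tr}_{\omega_t}\omega(t)-A\varphi$ multiplied by a cutoff equal to $1$ on $K$ and vanishing near the critical values $S$, tracking the fibre-direction collapse of $\omega_t$ and absorbing the bad terms using (1) and (2), as in \cite{ST12,FZ,TWY} and, for the continuity method, \cite{ZyZz2}; the reverse inequality $\omega(t)\ge C^{-1}\omega_t$ on $K$ then follows from this upper bound together with $\omega(t)^n\asymp e^{-(n-k)t}\Omega\asymp\omega_t^n$ on $K$ (the last comparison because $\omega_0^{n-k}\wedge f^*\chi^k$ is bounded above and below on $K\subset\subset X_{reg}$). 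For (4): by (3) and interior Evans-Krylov and Schauder estimates for \eqref{LT2}, $\{\varphi(t)\}_{t\ge T}$ is bounded in $C^{2,\alpha}_{loc}(X_{reg})$, hence precompact in $C^{1,\alpha}_{loc}(X_{reg})$, while by (1) it is precompact in $L^1(X,\Omega)$; any subsequential limit $\varphi_\infty$ is constant along each smooth fibre $X_y$ (by (3), $\omega(t)|_{X_y}\le Ce^{-t}\omega_0|_{X_y}\to0$ on $X_y$, so $\sqrt{-1}\partial\bar\partial(\varphi_\infty|_{X_y})=0$), hence $\varphi_\infty=f^*\psi_\infty$ with $\psi_\infty\in PSH(Y,\chi)\cap L^\infty(Y)$, and passing to the limit in \eqref{LT2} along $X_{reg}$ exactly as in \cite{ST12,ZyZz2} shows that $\psi_\infty$ solves \eqref{equation0}; by uniqueness of its solution (\cite{EGZ,ST12}) one gets $\psi_\infty=\psi$, so the whole family converges. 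This follows the scheme of \cite{To10,FZ,TWY} as adapted in \cite{ZyZz1,ZyZz2}.

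The step I expect to be the main obstacle is the partial second-order estimate in (3), with the uniform-in-$t$ lower bound on $\varphi$ in (1) a close second: in both cases the difficulty is that the reference forms $\omega_t$ simultaneously collapse (total volume $\asymp e^{-(n-k)t}$) and degenerate over $S$, so one must prevent the Ko\l odziej constant, respectively the cutoff and barrier in the maximum-principle computation, from deteriorating; this is where the reduction to the base $Y$ and the integrability $F\in L^{1+\epsilon}$ enter. Since these points are settled in dimension two in \cite{ZyZz1,ZyZz2} and, for the flow, in general dimension in \cite{ST12,FZ,TWY}, the remaining task here is to verify that the elliptic equation \eqref{LT2} of the continuity method is compatible with all of them.
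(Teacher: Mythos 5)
Your proposal is correct and follows essentially the same approach the paper relies on: the paper's own proof of Proposition~\ref{facts.1} is simply a citation to \cite{ZyZz1,ZyZz2}, and you have accurately reconstructed the estimates from those references (Ko\l odziej-type $C^0$ bound, Chern--Lu/Schwarz-type lower bound for the metric, localized partial $C^2$ estimate on $X_{reg}$, then Evans--Krylov/Schauder plus uniqueness of the limit equation), adapted to the elliptic equation \eqref{LT2}. Your observations that the absence of a time derivative removes the $\dot\varphi$ estimate, and that $Ric(\omega(t))\ge-(1-e^{-t})^{-1}\omega(t)$ supplies the required Ricci lower bound in the Schwarz computation, are exactly the right structural points.
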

\begin{proof}
See \cite{ZyZz1,ZyZz2} for a proof.
\end{proof}

We also have an analog of Lemma \ref{L1conv}.
\begin{lem}\label{L1conv.1}
Assume $\omega(t)_{t\in[0,\infty)}$, a solution to the continuity method \eqref{lt1} on $X$, satisfies Setup \ref{setup0}. Then, as $t\to\infty$, $e^{\varphi(t)}\to e^{f^*\psi}$ in $L^1(X,\Omega)$-topology.
\end{lem}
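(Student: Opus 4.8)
The plan is to deduce this from item (4) of Proposition \ref{facts.1} together with an elementary estimate, exactly in the spirit of the proof of Lemma \ref{L1conv}. First I would record the two uniform bounds that keep the exponential tame: by Proposition \ref{facts.1}(1) there is a uniform constant $C\ge1$ with $|\varphi(t)|\le C$ on $X\times[1,\infty)$, and since $\psi\in PSH(Y,\chi)\cap L^\infty(Y)$ we also have $|f^*\psi|\le C$ after enlarging $C$. Hence for all $t\ge1$ both $e^{\varphi(t)}$ and $e^{f^*\psi}$ take values in the fixed interval $[e^{-C},e^{C}]$, on which $x\mapsto e^{x}$ is Lipschitz with constant $e^{C}$.

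The second step is the pointwise comparison $|e^{\varphi(t)}-e^{f^*\psi}|\le e^{C}\,|\varphi(t)-f^*\psi|$ on $X$, obtained from the mean value theorem together with the bounds of the previous step. Integrating against $\Omega$ then gives
\begin{equation*}
\|e^{\varphi(t)}-e^{f^*\psi}\|_{L^1(X,\Omega)}\le e^{C}\,\|\varphi(t)-f^*\psi\|_{L^1(X,\Omega)},
\end{equation*}
and by Proposition \ref{facts.1}(4) the right-hand side tends to $0$ as $t\to\infty$. This already proves the stated convergence.

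If one additionally wants the conclusion phrased in terms of the normalized volume form, I would note from \eqref{LT2} that $\frac{e^{(n-k)t}\omega(t)^n}{\Omega}=e^{\varphi(t)/(1-e^{-t})}$, and that $\big|\frac{\varphi(t)}{1-e^{-t}}-\varphi(t)\big|=|\varphi(t)|\,\frac{e^{-t}}{1-e^{-t}}\to0$ uniformly on $X$ by $|\varphi(t)|\le C$; combined with the previous paragraph this yields $\frac{e^{(n-k)t}\omega(t)^n}{\Omega}\to e^{f^*\psi}$ in $L^1(X,\Omega)$ as well. I do not expect a genuine obstacle in this lemma: the only substantial input is the $L^1$-convergence $\varphi(t)\to f^*\psi$, which is already contained in Proposition \ref{facts.1}(4) (established in \cite{ZyZz1,ZyZz2}), and the remainder is the elementary estimate above.
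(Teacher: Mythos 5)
Your argument is correct, and it is the natural (and likely intended) proof: the paper states Lemma \ref{L1conv.1} without an explicit argument, merely pointing to the analogous Lemma \ref{L1conv}, and your Lipschitz-estimate route is exactly the ``elementary argument'' being alluded to. One remark worth making explicit: your proof is in fact \emph{simpler} than what is needed for the K\"ahler--Ricci flow version (Lemma \ref{L1conv}). There the relevant quantity is $e^{\dot\varphi(t)+\varphi(t)}$, and the $L^1$-convergence of $\varphi(t)$ alone does not control $\dot\varphi(t)$, which is why the paper has to invoke the local $L^\infty$-convergence of $\dot\varphi+\varphi$ on compact subsets of $X_{reg}$ (from \cite{TWY}) and then exhaust $X_{reg}$. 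In the continuity-method case the exponent is just $\varphi(t)$, whose $L^1(X,\Omega)$-convergence to $f^*\psi$ is directly supplied by Proposition \ref{facts.1}(4), so the uniform bound $|\varphi(t)|\le C$ from Proposition \ref{facts.1}(1) together with $\psi\in L^\infty(Y)$ and the mean value theorem closes the argument immediately without any exhaustion. Your final paragraph on $e^{\varphi(t)/(1-e^{-t})}$ is a correct but unnecessary addition; the lemma as stated is already proved after the inequality $\|e^{\varphi(t)}-e^{f^*\psi}\|_{L^1(X,\Omega)}\le e^{C}\|\varphi(t)-f^*\psi\|_{L^1(X,\Omega)}$.
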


\section{Limiting singular metrics on Riemann surfaces}\label{sectgke}
\subsection{A general result}In this subsection, we shall first prove the following result.
\begin{thm}\label{prop3.1}
Let $f:X\to\Sigma$ be a holomorphic surjective map from an $n$-dimensional ($n\ge2$) compact K\"ahler manifold $X$ to a compact Riemann surface $\Sigma$ with connected fibers. Let $S=\{s_1,\ldots,s_L\}$ be the critical values of $f$ on $\Sigma$. For a fix $s_l\in disc(f)$, we choose a local chart $(\Delta,s)$ near $s_l$ and identify $\Delta$ with $\{s\in\mathbb{C}||s|\le1\}$ and $s_l$ with $0\in\mathbb{C}$. Denote $\Delta^*=\Delta\setminus\{0\}$. Then, after possibly shrinking $\Delta$, there exist two positive constants $\beta_l\in\mathbb{Q}_{>0}$ and $N_l\in\{1,2,\ldots,n\}$ such that, for any smooth potitive volume form $\Omega$ on $X$, there exists a constant $C_l\ge1$ such that
\begin{equation}\label{est1}
C_l^{-1}|s|^{-2(1-\beta_l)}(-\log|s|)^{N_l-1}\le\frac{f_*\Omega}{\sqrt{-1}ds\wedge d\bar s}(s)\le C_l|s|^{-2(1-\beta_l)}(-\log|s|)^{N_l-1}
\end{equation}
holds on $\Delta^*$.
\end{thm}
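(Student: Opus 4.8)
The plan is to reduce the statement, near the critical value $s_l$, to a local computation on a log resolution over which the fibre of $f$ becomes a simple normal crossing divisor, and there to evaluate the relevant fibre integral explicitly. By Hironaka's theorem there is a proper modification $\pi\colon\tilde X\to X$ with $\tilde X$ smooth such that, setting $g:=f\circ\pi\colon\tilde X\to\Sigma$, the divisor $g^{*}(s_l)=\sum_i a_iF_i$ ($a_i\in\mathbb Z_{>0}$) has simple normal crossing support; write the relative canonical divisor as $K_{\tilde X/X}=\sum_i d_iF_i+(\text{divisors not contained in }g^{-1}(s_l))$ with $d_i\in\mathbb Z_{\ge0}$ (and $d_i\ge1$ exactly when $F_i$ is $\pi$-exceptional). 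Since $\pi$ is bimeromorphic, $\int_X(\rho\circ f)\Omega=\int_{\tilde X}(\rho\circ g)\pi^{*}\Omega$ for every continuous $\rho$ on $\Sigma$, hence $f_{*}\Omega=g_{*}(\pi^{*}\Omega)$ as positive measures near $s_l$; and since $\Omega$ is a smooth volume form, on any coordinate chart of $\tilde X$ we have $\pi^{*}\Omega=\theta\cdot\prod_i|\sigma_i|^{2d_i}\cdot(\text{Euclidean volume})$ with $\sigma_i$ a local equation of $F_i$ and $\theta$ smooth and strictly positive. The key point is that $a_i,d_i$ depend only on $f$ and the fixed $\pi$, whereas $\theta$ — hence the eventual $C_l$ — depends on $\Omega$; this is why one pair $(\beta_l,N_l)$ works for all $\Omega$.

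In the chart $(\Delta,s)$ around $s_l=0$, properness of $g$ makes $g^{-1}(\Delta)$ a neighbourhood of $g^{-1}(0)$ for $\Delta$ small, covered by finitely many polydiscs $U_\alpha$. On $U_\alpha$ one has $g=u\,z_1^{a_{\alpha,1}}\cdots z_{r_\alpha}^{a_{\alpha,r_\alpha}}$ with $u$ nonvanishing holomorphic, and replacing $z_1$ by $z_1u^{1/a_{\alpha,1}}$ (a single-valued root on the simply connected $U_\alpha$) we may assume $g=z_1^{a_{\alpha,1}}\cdots z_{r_\alpha}^{a_{\alpha,r_\alpha}}$ exactly, with each $\{z_j=0\}$ one of the $F_i$; order the coordinates so that $\frac{d_1+1}{a_{\alpha,1}}$ is smallest. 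With a partition of unity $\chi_\alpha\ge0$ subordinate to $\{U_\alpha\}$, $\sum_\alpha\chi_\alpha\equiv1$ near $g^{-1}(0)$, one gets for small $s$ that $\phi(s):=\frac{f_{*}\Omega}{\sqrt{-1}\,ds\wedge d\bar s}(s)=\sum_\alpha\phi_\alpha(s)$, with $\phi_\alpha(s)$ the integral over $g^{-1}(s)\cap U_\alpha$ of $\chi_\alpha$ times the Gelfand--Leray density $\pi^{*}\Omega/(\sqrt{-1}\,dg\wedge d\bar g)$. Parametrising the fibre by $(z_2,\dots,z_n)$ with $z_1=\big(s/(z_2^{a_{\alpha,2}}\cdots z_{r_\alpha}^{a_{\alpha,r_\alpha}})\big)^{1/a_{\alpha,1}}$ and computing that density directly (using $|z_1|=|s|^{1/a_{\alpha,1}}\prod_{j\ge2}|z_j|^{-a_{\alpha,j}/a_{\alpha,1}}$), one finds that, after dropping $\theta$ and the cutoff, both the integrand and its domain $\{|z_j|<1,\ \prod_{j\ge2}|z_j|^{a_{\alpha,j}}>|s|\}$ depend on $s$ only through $|s|$: the $z_1$-power contributes a factor $|s|^{-2(1-\beta_\alpha)}$, $\beta_\alpha:=\min_{j\le r_\alpha}\frac{d_j+1}{a_{\alpha,j}}\in\mathbb Q_{>0}$, and each further coordinate $z_j$ with $\frac{d_j+1}{a_{\alpha,j}}=\beta_\alpha$ contributes one factor $-\log|s|$ (its integral $\int|z_j|^{-2}$, cut off from below at $|z_j|$ of order $|s|^{1/a_{\alpha,j}}$). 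Hence near $0$,
\[
c_\alpha^{-1}|s|^{-2(1-\beta_\alpha)}(-\log|s|)^{N_\alpha-1}\le\phi_\alpha(s)\le c_\alpha|s|^{-2(1-\beta_\alpha)}(-\log|s|)^{N_\alpha-1},
\]
where $N_\alpha:=\#\{j\le r_\alpha:\frac{d_j+1}{a_{\alpha,j}}=\beta_\alpha\}$; restoring $0<\theta\le C$ only changes $c_\alpha$, while $0\le\chi_\alpha\le1$ preserves the upper bound.

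It remains to assemble. Put $\beta_l:=\min_i\frac{d_i+1}{a_i}$ (minimum over the components $F_i$ of $g^{*}(s_l)$), which lies in $\mathbb Q_{>0}$, and $N_l:=\max\{N_\alpha:\beta_\alpha=\beta_l\}$; since an SNC divisor in the $n$-dimensional $\tilde X$ has at most $n$ branches through a point, $N_l\in\{1,\dots,n\}$, and both are manifestly independent of $\Omega$. Summing the upper bounds over $\alpha$: terms with $\beta_\alpha>\beta_l$ are $O(|s|^{-2(1-\beta_l)}(-\log|s|)^{N_l-1})$ near $0$ because the power of $|s|$ dominates the extra logarithms, and terms with $\beta_\alpha=\beta_l$ have $N_\alpha\le N_l$; this gives $\phi(s)\le C_l|s|^{-2(1-\beta_l)}(-\log|s|)^{N_l-1}$. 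For the reverse inequality, pick a chart $U_{\alpha_0}$ with $(\beta_{\alpha_0},N_{\alpha_0})=(\beta_l,N_l)$ and arrange $\chi_{\alpha_0}\ge\frac12$ on an open set meeting the stratum $\bigcap_{j:\,(d_j+1)/a_{\alpha_0,j}=\beta_l}\{z_j=0\}$, which is precisely where the dominant part of the fibre integral lives; then $\phi(s)\ge\phi_{\alpha_0}(s)\ge C_l^{-1}|s|^{-2(1-\beta_l)}(-\log|s|)^{N_l-1}$. Enlarging $C_l$ and shrinking $\Delta$ gives \eqref{est1}.

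The algebro-geometric input (Hironaka) is standard and used as a black box; the genuine work, and the point most likely to require care, is the explicit fibre-integral analysis — obtaining a two-sided bound valid uniformly down to $s=0$ (not merely a leading-order asymptotic) and reading off the exponent $-2(1-\beta_\alpha)$ and the logarithmic power $N_\alpha-1$ correctly from the resolution data $(a_i,d_i)$ — together with the bookkeeping that, because every contribution is nonnegative, summing the local pieces neither cancels the leading term nor worsens the exponents, so that the global invariants are exactly $\beta_l=\min_i\frac{d_i+1}{a_i}$ and the associated maximal number of coincident branches.
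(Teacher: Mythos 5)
Your proof is correct and follows essentially the same approach as the paper: a Hironaka resolution over $s_l$ making the fibre a simple normal crossing divisor $\sum a_j E_j$, the ramification formula to write $\pi^*\Omega$ with the discrepancies $k_j$, a local fibre-integral computation in each chart producing the exponent $\beta=\min_j (k_j+1)/a_j$ and one logarithmic factor for each coincident minimizer meeting the chart, and a partition of unity to assemble. The only superficial difference is the evaluation of the local integral: the paper uses the logarithmic change of variables $z^j=e^{u^j/a_j}e^{\sqrt{-1}\theta^j}$ borrowed from Eriksson--Freixas i Montplet--Mourougane, whereas you parametrize the fibre directly by $(z_2,\ldots,z_n)$ and read off the exponent and logarithmic factor from the Gelfand--Leray density.
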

This result is an analogue of the degeneration of $L^2$-metric on Hodge bundle $f_*K_{X/\Sigma}$, see e.g. \cite{Sc,Pe,BJ,EFM}. However, in general $f_*\Omega$ is not the $L^2$-metric on $f_*K_{X/\Sigma}$ and we can not apply the results in \cite{Sc,Pe,BJ,EFM} directly to this setting.
\begin{proof}[Proof of Theorem \ref{prop3.1}]
Since our result is local on the base $\Sigma$, in the following we assume without loss of generality that $S=\{s_1\}$, i.e., there is only one critical value. Set $\Sigma_{reg}:=\Sigma\setminus S$.
\par By applying a Hironaka's resolution of singularity, we obtain a birational morphism
$$\pi:\hat{X}\to X$$
such that $\hat X$ is smooth and
$$\hat f:=f\circ\pi:\hat X\to \Sigma$$
is a holomorphic surjective map with connected fibers and the only singular fiber
\begin{equation}
\hat X_0=\sum a_jE_j\nonumber,
\end{equation}
where $a_i\in\mathbb{N}_{\ge1}$, $E_i$'s are smooth irreducible divisors in $\hat X$ and have simple normal crossings. Since $\pi$ is a biholomorphism over $X\setminus X_0$, we know
\begin{equation}\label{e3.1}
\int_{X_s}\Omega=\int_{\hat X_s}\pi^*\Omega
\end{equation}
on $\Sigma_{reg}$. Therefore, we are led to compute the right hand side of \eqref{e3.1}. \\

\par To this end, we naturally need the following ramification formula
$$K_{\hat X}=\pi^*K_X+\sum k_jE_j,$$
where $k_j\in\mathbb{N}_{\ge0}$ since $X$ is smooth. If we fix a defining section $s_j$ of $E_j$ and a smooth Hermitian metric $h_j$ on $\mathcal{O}_{\hat X}(E_j)$, then there exists a smooth nondegenerate volume form $\hat \Omega$ on $\hat X$ satisfying
$$\pi^*\Omega=\prod_{j}|s_j|_{h_j}^{2k_j}\hat\Omega.$$

For any fixed point $x_0\in \hat X_0$, we set $J(x_0)=\{j|x_0\in E_j\}$. Then we choose a local chart $(U_{0},z^1,\ldots,z^n)$ on $\hat X$ near $x_0$ such that for $j\in J(x_0)$, $E_j$ is given by $z^j=0$ and the map $f$ is given by
$$\hat f(z^1,\ldots,z^n)=\prod_{j\in J(x_0)}(z^j)^{a_j}=s.$$
After possibly shrinking $U_{0}$, we write
$$\pi^*\Omega=\sqrt{-1}^{n^2}h^0\prod_{j\in J(x_0)}|z^j|^{2k_j}dz^1\wedge\ldots\wedge dz^n\wedge d\bar z^1\wedge\ldots\wedge d\bar z^n$$
for some positive smooth function $h^0$ on $U_{0}$. Moreover, for any $j_0\in J(x_0)$, if we set
$$\xi_0:=\frac{1}{s}(-1)^{j_0}\frac{z^{j_0}}{a_{j_0}}dz_1\wedge\ldots\wedge dz^{j_0-1}\wedge dz^{j_0+1}\wedge\ldots\wedge dz^{n},$$
then we have
$$\pi^*\Omega=\sqrt{-1}^{n^2}h^0\prod_{j\in J(x_0)}|z^j|^{2k_j}\hat f^*ds\wedge\xi_0\wedge\overline{\hat f^*ds}\wedge\bar\xi_0$$
on $U_0$ and hence
\begin{equation}\label{ae.2.1}
\frac{\int_{\hat X_s\cap U_0}\pi^*\Omega}{\sqrt{-1}ds\wedge d\bar s}=\sqrt{-1}^{(n-1)^2}\int_{\hat X_s\cap U_0}h^0\prod_{j\in J(x_0)}|z^j|^{2k_j}\xi_0\wedge\bar\xi_0.
\end{equation}
The most convenient choice of $j_0$ for the above $\xi_0$ will be determined later.\\

\par To compute the right hand side of \eqref{ae.2.1}, we now apply a trick in \cite[Section 2]{EFM}. More precisely, we change the variables by $z^j=e^{\frac{u^j}{a_j}}e^{\sqrt{-1}\theta^j}$ for $j\in J(x_0)$ and $z^j=r_je^{\sqrt{-1}\theta^j}$ for $j\in\{1,\ldots,n\}\setminus J(x_0)$. We may assume $u^j\le0$ and $0\le r^j\le1$ on $U_0$. Moreover, in this new coordinate,
$$\hat X_s\cap U_0=\{\sum_{j\in J(x_0)}u^j=\log|s|\}.$$
Then we have

\begin{align}\label{cal1}
&\sqrt{-1}^{(n-1)^2}\int_{\hat X_s\cap U_0}h^0\prod_{j\in J(x_0)}|z^j|^{2k_j}\xi_0\wedge\bar\xi_0\nonumber\\
&=\frac{1}{|s|^2}\int_{\hat X_s\cap U_0}h^{0}\prod_{b\in J(x_0)}|z^b|^{2k_b}\frac{|z_{j_0}|^2}{|a_{j_0}|^2}|dz^1|^2\cdot\ldots\cdot |dz^{j_0-1}|^2\cdot |dz^{j_0+1}|^2\cdot\ldots\cdot |dz^n|^2\nonumber\\
&=\frac{C}{|s|^2}\int_{\{\sum_{j\in J(x_0)} u^{j}=\log|s|\}}h^{0}\prod_{b\in J(x_0)}e^{2\frac{k_b+1}{a_{b}}u^b}\prod_{j'\in J(x_0)\setminus\{j_0\}}du^{j'}\prod_{j''\in \{1,\ldots,n\}\setminus J(x_0)}dr^{j''}\nonumber.
\end{align}

Note that $h^0$ is uniformly bounded from zero. Therefore, if we choose a $j_0\in J(x_0)$ such that $\frac{k_{j_0}+1}{a_{j_0}}=\min\{\frac{k_{j}+1}{a_j},j\in J(x_0)\}$, then
\begin{align}
&\frac{\int_{\hat X_s\cap U_0}\pi^*\Omega}{\sqrt{-1}ds\wedge d\bar s}\nonumber\\
&=\sqrt{-1}^{(n-1)^2}\int_{\hat X_s\cap U_0}h^0\prod_{j\in J(x_0)}|z^j|^{2k_j}\xi_0\wedge\bar\xi_0\nonumber\\
&=\frac{C}{|s|^{2(1-\frac{k_{j_0}+1}{a_{j_0}})}}\int_{\{\sum_{j\in J(x_0)} u^j=\log|s|\}}h^{0}\prod_{b\in J(x_0)}e^{2(\frac{k_b+1}{a_b}-\frac{k_{j_0}+1}{a_{j_0}})u^b}\prod_{j'\in J(x_0)\setminus\{j_0\}}du^{j'}\prod_{j''\in \{1,\ldots,n\}\setminus J(x_0)}dr^{j''}\nonumber,
\end{align}
which clearly has asymptotic
$$|s|^{-2(1-\frac{k_{j_0}+1}{a_{j_0}})}(-\log|s|)^{N_0-1},$$
where $N_0:=|\{j\in J(x_0)|\frac{k_j+1}{a_j}=\frac{k_{j_0}+1}{j_0}\}|$.\\

Now we set $\beta=\min\{\frac{k_{j}+1}{a_j}\}$ and $N\ge1$ be the maximal number such that there are $N$ $E_j$'s with $\frac{k_j+1}{a_j}=\beta$ and non-empty intersection. Of course $\beta\in\mathbb{Q}_{>0}$ and $N\in\{1,2,\ldots,n\}$.
\par We choose a $x_0$ in such an intersection and its open neighborhood $U_0$ with above properties. Furthermore, we extend $U_0$ to be an open cover $\{U_\alpha\}$ of $\hat f^{-1}(\Delta)$ (after possibly shrinking $\Delta$) such that every $U_{\alpha}$ centers at some $x_\alpha\in \hat X_0$ and satisfies the above properties. Then, using a partition of unity $\{\phi_\alpha\}$ subordinate to $\{U_{\alpha}\}$, we have
\begin{align}
\frac{\int_{\hat X_s}\pi^*\Omega}{\sqrt{-1}ds\wedge d\bar s}&=\sum_{\alpha}\frac{\int_{\hat X_s}\phi_\alpha\pi^*\Omega}{\sqrt{-1}ds\wedge d\bar s}\nonumber\\
&=\sum_{\alpha}\frac{\int_{\hat X_s\cap U_\alpha}\phi_\alpha\pi^*\Omega}{\sqrt{-1}ds\wedge d\bar s}\nonumber.
\end{align}
Therefore, we conclude that, there exists a constant $C\ge1$ such that
$$C^{-1}|s|^{-2(1-\beta)}(-\log|s|)^{N-1}\le\frac{\int_{\hat X_s}\pi^*\Omega}{\sqrt{-1}ds\wedge d\bar s}\le C|s|^{-2(1-\beta)}(-\log|s|)^{N-1}$$
holds on $\Delta^*$, which, combining \eqref{e3.1}, completes the proof of Theorem \ref{prop3.1}.
\end{proof}

\begin{rem}\label{remk1.2}
The constant $\beta_l$ in Theorem \ref{prop3.1} is the \emph{log-canonical threshold} of $X_{s_l}$ along $X$, in the sense of \cite[Definition 9.3.12, Example 9.3.16]{La}. In the special case that $f:X\to\Sigma$ is a minimal elliptic surface, this fact was first pointed out by Ivan Cheltsov to Hans-Joachim Hein.
\end{rem}

\subsection{Two special cases of Theorem \ref{prop3.1}}
We now look at two special cases of Theorem \ref{prop3.1}.
\par \textbf{Special case (1): Calabi-Yau setting.} When the total space $X$ is an $n$-dimensional Calabi-Yau manifold and $\Omega=\sqrt{-1}^{n^2}\eta\wedge\bar\eta$ for some nowhere vanishing holomorphic $(n,0)$-form on $X$, a (not necessarily optimal) upper bound of $f_*\Omega$ similar to \eqref{est1} was proved by Gross-Tosatti-Zhang \cite[Section 2]{GTZ} by using Hodge theory (see also \cite{Be,BJ,Wa} for some related discussions).
\par \textbf{Special case (2): minimal elliptic surfaces.} When $f:X\to\Sigma$ in Theorem \ref{prop3.1} is a minimal elliptic surface, the estimate \eqref{est1} was proved by Song-Tian \cite[Lemma 3.4]{ST06} and Hein \cite[Section 3.3]{He}. Moreover, in \cite{He,ST06}, the constants $\beta$ and $N$ are obtained precisely according to the types of singular fibers in Kodaira's table \cite[Section V.7]{BHPV}. For example, in \cite{He}, these constants are determined by a detailed analysis on the asymptotic behaviors of semi-flat metrics near the singular fibers of $X$.
\par In the following, we discuss how to recover the result of Song-Tian \cite[Lemma 3.4]{ST06} and Hein \cite[Section 3.3]{He} by our above arguments, and hence provide an alternative proof for their result.
\par\emph{Case 1: $X_0$ has simple normal crossing support.}
\par These are singular types $\textrm{mI}_0, \textrm{mI}_b (b\ge3), \textrm{I}_b^* (b\ge0), \textrm{II}^*,\textrm{III}^*$ and $\textrm{IV}^*$. In these cases we don't need any resolution (or, let $\pi=Id:X\to X$) and hence $k_j=0$ for all $j$.
\par (1.1) For singular type $\textrm{mI}_0,\textrm{I}_0^*,\textrm{II}^*, \textrm{III}^*, \textrm{IV}^*$, there is exactly one component of $X_0$ with maximal multiplicity $m,2,6,4,3$, respectively, and therefore, $N=1$ and $\beta=\frac{1}{m},\frac{1}{2},\frac{1}{6},\frac{1}{4},\frac{1}{3}$, respectively.
\par (1.2) For singular type $\textrm{mI}_b (b\ge3)$, every component has the same multiplicity $m$ and there exist two components that has non-empty intersections. Therefore, $\beta=\frac{1}{m}$ and $N=2$.
\par (1.3) For singular type $\textrm{I}_b^* (b\ge1)$, there always exist two components that has the maximal multiplicity $2$ and non-empty intersections. Therefore, $\beta=\frac{1}{2}$ and $N=2$.\\

\par\emph{Case 2: $X_0$ doesn't have simple normal crossing support.}
\par These are singular types $\textrm{II},\textrm{III},\textrm{IV},\textrm{mI}_1$ and $\textrm{mI}_2$.
\par (2.1) For singular type $\textrm{II}$, i.e., $X_0$ is a cuspidal rational curve, we can choose a resolution $\pi:\hat X\to X$ such that
$$\hat X_0=E_1+2E_2+3E_3+6E_4$$
and
$$K_{\hat X}=\pi^*K_X+E_2+2E_3+4E_4.$$
Therefore, we have $\beta=\frac{5}{6}$ and $N=1$.
\par (2.2) For singular type $\textrm{III}$, we can choose a resolution $\pi:\hat X\to X$ such that
$$\hat X_0=E_1+E_2+2E_3+4E_4$$
and
$$K_{\hat X}=\pi^*K_X+E_3+2E_4.$$
Therefore, we have $\beta=\frac{3}{4}$ and $N=1$.
\par (2.3) For singular type $\textrm{IV}$, we can choose a resolution $\pi:\hat X\to X$ such that
$$\hat X_0=E_1+E_2+E_3+3E_4$$
and
$$K_{\hat X}=\pi^*K_X+E_4.$$
Therefore, we have $\beta=\frac{2}{3}$ and $N=1$.
\par (2.4) For singular type $\textrm{mI}_1$, we can choose a resolution $\pi:\hat X\to X$ such that
$$\hat X_0=mE_1+2mE_2$$
and
$$K_{\hat X}=\pi^*K_X+E_2.$$
Note that $\frac{k_1+1}{a_1}=\frac{k_2+1}{a_2}=\frac{1}{m}$ and $E_1$ and $E_2$ have non-empty intersections. Therefore, we have $\beta=\frac{1}{m}$ and $N=2$.
\par (2.5) For singular type $\textrm{mI}_2$, we can choose a resolution $\pi:\hat X\to X$ such that
$$\hat X_0=mE_1+mE_2+2mE_3+2mE_4.$$
and
and
$$K_{\hat X}=\pi^*K_X+E_3+E_4.$$
Note that $\frac{k_j+1}{a_j}\equiv\frac{1}{m}$, $j=1,2,3,4$, and there exist two components that has non-empty intersections. Therefore, we have $\beta=\frac{1}{m}$ and $N=2$.\\

The above results coincide with \cite[Lemma 3.4]{ST06} and \cite[Section 3.3, Table 1 in page 377]{He}, as expected.

\subsection{Limiting singular metrics on Riemann surfaces}\label{subs3.1} We now apply Theorem \ref{prop3.1} to understand the limiting singular metrics of the K\"ahler-Ricci flow and the continuity method on Riemann surfaces.
\begin{proof}[Proofs of Theorems \ref{theorem1} and \ref{theorem1.1}]
The proofs of Theorems \ref{theorem1} and \ref{theorem1.1} are identical. We only discuss Theorem \ref{theorem1}. Firstly, since $dim(Y)=1$, combining equations \eqref{equation} and \eqref{equation0} gives
\begin{equation}\label{equation2}
\omega_Y=\frac{1}{nV_0}e^{\psi}f_*\Omega.
\end{equation}
We know $Y$ is smooth and $\omega_Y$ is smooth on $Y\setminus S$. Moreover, $\psi$ is a bounded function on $Y$. Then, the asymptotics of $\omega_Y$ near an $s_l\in S$ is just the asymptotics of $f_*\Omega$. Therefore, we can apply Theorem \ref{prop3.1} to conclude \eqref{est1.1}, which implies the metric completion $(X_\infty,d_\infty)$ of $(Y\setminus S,\omega_Y)$ is a compact length metric space homeomorphic to $Y$ (see e.g. \cite[Proposition 3.3]{ZyZz1} for an argument).
\par Theorem \ref{theorem1} is proved.
\end{proof}

\section{Proofs of Theorems \ref{result0}, \ref{result0.1} and \ref{result0.2}}\label{proof.1}

Now, we are able to prove Theorems \ref{result0}, \ref{result0.1} and \ref{result0.2}.

\begin{proof}[Proofs of Theorems \ref{result0}, \ref{result0.1} and \ref{result0.2}]
The proofs of Theorems \ref{result0}, \ref{result0.1} and \ref{result0.2} are identical. We only discuss Theorem \ref{result0}.
\par Having Theorem \ref{theorem1}, Proposition \ref{observation2} and a uniform lower bound for Ricci curvature, one can use the same arguments in \cite[Section 3]{ZyZz1} to conclude Theorem \ref{result0}. To make this paper more readable, we give a sketch as follows.

\par We split the proof into several lemmas. Firstly, we assume without loss of generality that $S=\{s_1\}$ and denote $Y_{reg}:=Y\setminus\{s_1\}$, $X_{reg}:=f^{-1}(Y_{reg})$, $B_\delta:=B_\chi(s_1,\delta)=\{s\in Y|d_\chi(s,s_1)\le\delta\}$, $Y_\delta=Y\setminus B_\delta$ and $X_{\delta}:=f^{-1}(Y_{\delta})$. We assume without loss of generality that, for sufficiently small $\delta$,  $B_\delta$ is the standard disc in $\mathbb{C}$. According to the asymptotic of $\omega_{Y}$ near $s_1$ obtained in Theorem \ref{theorem1}, we can fix a sufficiently large constant $L$ (we are free to increase $L$ if necessary) and a sufficiently small constant $\epsilon_0$ such that for any $\epsilon\le\epsilon_0$ we have
\begin{equation}\label{g1}
diam_{d_\infty}(B_{\epsilon^L})\le\frac{\epsilon}{2}.
\end{equation}
By the asymptotics in Theorem \ref{theorem1}, we furthermore have
\begin{lem}\label{geod1}(see proof of \cite[Lemma 3.6]{ZyZz1})
For any $s,r\in Y_{\epsilon^L}$, there exists a piecewise smooth curve $\gamma\subset Y_{\epsilon^L}$ connecting $s,r$ such that
$$L_{\omega_{Y}}(\gamma)\le d(s,r)+2\epsilon.$$
\end{lem}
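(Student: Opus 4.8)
The plan is a standard \emph{surgery} argument: starting from an almost length-minimizing curve from $s$ to $r$, any excursion it makes into the tiny coordinate disc $B_{\epsilon^L}$ around $s_1$ is cut out and replaced by a short arc of the bounding circle, and the asymptotics of Theorem \ref{theorem1} guarantee that this replacement costs at most $O(\epsilon)$ in length. The only quantitative input needed beyond \eqref{g1} is the following smallness of circle lengths: writing $B_\rho=B_\chi(s_1,\rho)$, which for small $\rho$ is a coordinate disc $\{|s|\le c\rho\}$, the estimate \eqref{est1.1} gives $L_{\omega_Y}(\partial B_\rho)\le C\rho^{\beta_l}(-\log\rho)^{(N_l-1)/2}$; since $L\beta_l>1$ once $L$ is enlarged, after shrinking $\epsilon_0$ (and enlarging $L$) we may assume, alongside \eqref{g1}, that
$$L_{\omega_Y}(\partial B_{\epsilon^L})\le\frac{\epsilon}{2}\qquad\text{for all }\epsilon\le\epsilon_0.$$
I would also recall that, since $\beta_l>0$, the estimate \eqref{est1.1} makes $s_1$ a point at finite $\omega_Y$-distance, so $d(s,r)$ for $s,r\in Y_{reg}$ is the infimum of $L_{\omega_Y}(\gamma)$ over piecewise smooth curves $\gamma$ in $Y_{reg}=Y\setminus\{s_1\}$ joining $s$ and $r$.

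With these preliminaries, fix a piecewise smooth $\gamma_0\subset Y_{reg}$ from $s$ to $r$ with $L_{\omega_Y}(\gamma_0)\le d(s,r)+\epsilon$. If $\gamma_0\subset Y_{\epsilon^L}$ there is nothing to prove. Otherwise, since $s,r\in Y_{\epsilon^L}$ but $\gamma_0$ meets $B_{\epsilon^L}$, set $t_1:=\inf\{t:\gamma_0(t)\in B_{\epsilon^L}\}$ and $t_2:=\sup\{t:\gamma_0(t)\in B_{\epsilon^L}\}$; by continuity $0<t_1\le t_2<1$ and $\gamma_0(t_1),\gamma_0(t_2)\in\partial B_{\epsilon^L}$. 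Let $\sigma$ be the shorter of the two arcs of $\partial B_{\epsilon^L}$ joining $\gamma_0(t_1)$ to $\gamma_0(t_2)$, and put
$$\gamma:=\gamma_0|_{[0,t_1]}\ast\sigma\ast\gamma_0|_{[t_2,1]}.$$
This $\gamma$ is piecewise smooth and avoids the open disc $B_{\epsilon^L}^\circ$; after a harmless perturbation routing $\sigma$ and short neighborhoods of $\gamma_0(t_1),\gamma_0(t_2)$ slightly outward along a circle of radius marginally larger than $\epsilon^L$ (which changes the length bound by an arbitrarily small amount, using the same asymptotics), we may assume $\gamma\subset Y_{\epsilon^L}$. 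Discarding the middle portion of $\gamma_0$ and using $L_{\omega_Y}(\sigma)\le L_{\omega_Y}(\partial B_{\epsilon^L})\le\epsilon/2$,
$$L_{\omega_Y}(\gamma)\le L_{\omega_Y}\big(\gamma_0|_{[0,t_1]}\big)+L_{\omega_Y}(\sigma)+L_{\omega_Y}\big(\gamma_0|_{[t_2,1]}\big)\le L_{\omega_Y}(\gamma_0)+\frac{\epsilon}{2}\le d(s,r)+2\epsilon,$$
which is the claimed inequality.

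The argument is elementary once Theorem \ref{theorem1} is available; the genuinely essential ingredient is the quantitative smallness of $L_{\omega_Y}(\partial B_{\epsilon^L})$, which is exactly what the asymptotics \eqref{est1.1} provide and is the reason \eqref{g1} (together with the freedom to enlarge $L$) is arranged beforehand. The only points requiring mild care are purely topological: the bookkeeping when $\gamma_0$ enters and leaves $B_{\epsilon^L}$ several times, handled by taking the first entrance and last exit times $t_1,t_2$; and the open-versus-closed disc subtlety at $\partial B_{\epsilon^L}$, handled by the outward perturbation of $\sigma$. I do not expect either of these to be a real obstacle.
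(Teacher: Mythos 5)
Your proof is correct, and the surgery argument — cut out the excursion into $B_{\epsilon^L}$, replace it by an arc of $\partial B_{\epsilon^L}$, and control the cost via the circle-length estimate $L_{\omega_Y}(\partial B_\rho)\lesssim\rho^{\beta_l}(-\log\rho)^{(N_l-1)/2}$ coming from \eqref{est1.1} — is exactly the mechanism behind the cited \cite[Lemma 3.6]{ZyZz1}, which the paper invokes without reproducing. You correctly identify that the diameter bound \eqref{g1} alone does not suffice (it produces a short path but not one avoiding $B_{\epsilon^L}$) and that the separate circumference estimate, with $L$ enlarged so $L\beta_l>1$, is the genuinely needed input; the only cosmetic issue is that the paper's $Y_{\epsilon^L}=Y\setminus B_{\epsilon^L}$ is formally open (excluding $\partial B_{\epsilon^L}$), which your outward-perturbation step handles, though the paper's later usage (e.g.\ taking $\bar x\in\partial X_{\epsilon^L}$ with $\bar x$ treated as lying in $X_{\epsilon^L}$) makes clear that $Y_\delta$ is meant to be the closed set $\{d_\chi(\cdot,s_1)\ge\delta\}$, in which case no perturbation is needed.
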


\begin{lem}\label{geod2}(see proof of \cite[Lemma 3.6]{ZyZz1})
For any $\epsilon>0$, there exists a $T_\epsilon>0$ such that for any $x,y\in X_{\epsilon^L}$, we can find a piecewise smooth curve $\tau\subset X_{\epsilon^L}$ connecting $x,y$ such that for any $t\in[T_\epsilon,\infty)$,
$$L_{\omega(t)}(\tau)\le d(f(x),f(y))+3\epsilon.$$
\end{lem}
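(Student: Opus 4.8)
The plan is to upgrade the base-space statement of Lemma \ref{geod1} to the total space by using the $C^0$-convergence $\omega(t)\to f^*\omega_Y$ on compact subsets of $X_{reg}$ (Proposition \ref{observation2}) together with the collapsing of the fibers. First I would take the piecewise smooth curve $\gamma\subset Y_{\epsilon^L}$ connecting $f(x)$ and $f(y)$ provided by Lemma \ref{geod1}, so that $L_{\omega_Y}(\gamma)\le d(f(x),f(y))+2\epsilon$. The naive idea is to lift $\gamma$ to a horizontal curve in $X_{\epsilon^L}$ and then join $x$ and $y$ to the endpoints of the lift by short vertical arcs inside the fibers $X_{f(x)}$ and $X_{f(y)}$. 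Since $f(x),f(y)\in Y_{\epsilon^L}$ stay a definite distance away from the singular value $s_1$, the set $\overline{X_{\epsilon^L/2}}$ (say) is a compact subset of $X_{reg}$ on which Proposition \ref{observation2} applies: there is $T_\epsilon$ so that for $t\ge T_\epsilon$ one has $\omega(t)\le(1+\epsilon')f^*\omega_Y+ (\text{something small in the fiber directions})$ in $C^0$ on this set. Concretely, on such a compact set $\omega(t)$ is, up to an error that tends to $0$, bounded above by $f^*\omega_Y$ plus the restriction $\omega(t)|_{X_y}$, and the latter has diameter $O(e^{-t/?})\to 0$ by Proposition \ref{facts}(1)--(2) (the fiber volumes decay like $e^{-(n-k)t}$, so fibers collapse).

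The key steps, in order, are: (i) fix $\epsilon>0$ and apply Lemma \ref{geod1} to get $\gamma$; (ii) choose a smooth horizontal lift (or an approximate lift via a local product structure of $f$ over the compact set $\Sigma\setminus B_{\epsilon^L/2}$, using that $f$ is a proper submersion there) of $\gamma$ to a curve $\hat\gamma\subset X_{\epsilon^L}$, whose $\omega(t)$-length is controlled via $L_{\omega(t)}(\hat\gamma)^2\le L_{\omega(t)\text{ horizontal}}(\hat\gamma)^2$ and the $C^0$-closeness of $\omega(t)$ to $f^*\omega_Y$ on the compact set — giving $L_{\omega(t)}(\hat\gamma)\le L_{\omega_Y}(\gamma)+\epsilon\le d(f(x),f(y))+3\epsilon/2$ say, for $t$ large; (iii) connect $x$ to the initial endpoint of $\hat\gamma$ inside the fiber $X_{f(x)}$ by a path whose $\omega(t)$-length is at most $\mathrm{diam}_{\omega(t)}(X_{f(x)})\to 0$, and similarly for $y$; (iv) concatenate to produce $\tau\subset X_{\epsilon^L}$ with $L_{\omega(t)}(\tau)\le d(f(x),f(y))+3\epsilon$ for $t\ge T_\epsilon$ (after enlarging $T_\epsilon$ to absorb all the small errors). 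One must also check $\tau$ stays inside $X_{\epsilon^L}$: the horizontal part projects into $Y_{\epsilon^L}$ by construction, and the vertical parts lie in the fibers over $f(x),f(y)\in Y_{\epsilon^L}$, so this is automatic.

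The main obstacle I anticipate is controlling the diameter of the fibers $X_y$ in the metric $\omega(t)$ uniformly for $y$ in the (non-compact-looking but in fact compact) region $Y_{\epsilon^L}$, and more precisely making the horizontal-lift length estimate rigorous: $\omega(t)$ is not literally a product metric near a fiber, so the "horizontal lift" has to be chosen carefully and the bound $L_{\omega(t)}(\hat\gamma)\le L_{f^*\omega_Y}(\hat\gamma)+o(1)$ requires the genuine $C^0(K',\omega_0)$-convergence of Proposition \ref{observation2}, not merely current convergence. Since $Y_{\epsilon^L}$ is a fixed compact subset of $Y\setminus S$ once $\epsilon$ is fixed, $X_{\epsilon^L}\subset\subset X_{reg}$ and this convergence does apply with a $T_\epsilon$ depending on $\epsilon$; the fiber-diameter decay then follows from the volume bound $\omega(t)^n\le Ce^{-(n-k)t}\Omega$ of Proposition \ref{facts}(1) combined with the two-sided bound $C^{-1}\omega_t\le\omega(t)\le C\omega_t$ on $K=X_{\epsilon^L/2}$ from Proposition \ref{facts}(3) (which pins down the fiber directions), exactly as in \cite[Lemma 3.6]{ZyZz1}. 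This is why the lemma is attributed to the proof of \cite[Lemma 3.6]{ZyZz1}; I would simply transcribe that argument with $\omega_Y$ replaced by its one-dimensional-base incarnation and Theorem \ref{theorem1} supplying the needed asymptotics near $s_1$.
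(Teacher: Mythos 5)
Your proposal is correct and follows essentially the same route the paper intends (it cites the proof of \cite[Lemma 3.6]{ZyZz1} for exactly this construction): lift the curve from Lemma \ref{geod1}, control the horizontal length via the $C^0(K',\omega_0)$-convergence of Proposition \ref{observation2}, and absorb the fiber-connecting arcs using $\omega(t)\le C\omega_t$ on $X_{\epsilon^L}$ from Proposition \ref{facts}(3), which forces the fiber diameters to decay like $e^{-t/2}$. The only minor imprecision is the remark that fiber diameter decay follows "by Proposition \ref{facts}(1)--(2)" via volume decay — volume decay alone does not give diameter decay; the actual mechanism is the two-sided metric comparison of Proposition \ref{facts}(3), which you do correctly invoke later in the paragraph.
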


\begin{lem}\label{geod3}(see proof of \cite[Lemma 3.6]{ZyZz1})
For any $\epsilon>0$, there exists a $T_\epsilon>0$ such that for any $x,y\in X_{\epsilon}$ and $t\in[T_\epsilon,\infty)$, there exists a piecewise smooth curve $\tau^t\subset X_{\epsilon}$ with
$$L_{\omega(t)}(\tau^t)\le d_{\omega(t)}(x,y)+4\epsilon.$$
\end{lem}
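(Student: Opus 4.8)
The plan is to obtain the asserted estimate by combining Lemma \ref{geod2} with a matching lower bound for $d_{\omega(t)}$. Precisely, I would first prove that, for $t$ large and $x,y\in X_{\epsilon^L}$,
\begin{equation}\label{lbgeod3}
d(f(x),f(y))\le d_{\omega(t)}(x,y)+\epsilon,
\end{equation}
and then take for $\tau^t$ the curve $\tau\subset X_{\epsilon^L}$ produced by Lemma \ref{geod2}; by \eqref{lbgeod3} it satisfies $L_{\omega(t)}(\tau^t)\le d(f(x),f(y))+3\epsilon\le d_{\omega(t)}(x,y)+4\epsilon$, as required.

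To prove \eqref{lbgeod3} I would take a minimizing $\omega(t)$-geodesic $\sigma^t$ from $x$ to $y$ and bound its length from below. Since $s_1\in B_{\epsilon^L}$, the region $X_{\epsilon^L}=f^{-1}(Y\setminus B_{\epsilon^L})$ is a \emph{fixed} compact subset of $X_{reg}$, independent of $t$. On such a set, Proposition \ref{observation2} and Proposition \ref{facts}(3) should yield a length comparison
\begin{equation}\label{lcomp}
L_{\omega(t)}(\mu)\ge(1-\eta_t)L_{\omega_Y}(f(\mu)),\qquad \eta_t\to0,
\end{equation}
for every subarc $\mu$ of $\sigma^t$ lying in $X_{\epsilon^L}$, and hence $L_{\omega(t)}(\sigma^t)\ge(1-\eta_t)L_{\omega_Y}\big(f(\sigma^t)\cap(Y\setminus B_{\epsilon^L})\big)$.

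I would then let $a$ (resp.\ $b$) be the start of the first (resp.\ the end of the last) maximal subarc of $f(\sigma^t)$ contained in the interior of $B_{\epsilon^L}$; if there is none, then $\sigma^t\subset X_{\epsilon^L}$ and $L_{\omega(t)}(\sigma^t)\ge(1-\eta_t)d(f(x),f(y))$ directly. Otherwise $a,b\in\partial B_{\epsilon^L}\subset Y\setminus S$, and \eqref{g1} gives $d(a,b)\le diam_{d_\infty}(B_{\epsilon^L})\le\epsilon/2$; moreover the parts of $f(\sigma^t)$ before $a$ and after $b$ remain in $Y\setminus B_{\epsilon^L}$ and join $f(x)$ to $a$ and $b$ to $f(y)$, so the triangle inequality in $(X_\infty,d_\infty)$ gives
$$L_{\omega_Y}\big(f(\sigma^t)\cap(Y\setminus B_{\epsilon^L})\big)\ge d(f(x),a)+d(b,f(y))\ge d(f(x),f(y))-\frac{\epsilon}{2}.$$
Since $d(f(x),f(y))\le diam(X_\infty,d_\infty)<\infty$ by Theorem \ref{theorem1}, combining the last two displays yields $d_{\omega(t)}(x,y)\ge(1-\eta_t)\big(d(f(x),f(y))-\epsilon/2\big)\ge d(f(x),f(y))-\epsilon$ once $t$ is large enough that $\eta_t$ is small, which is \eqref{lbgeod3}.

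The hard part is the length comparison \eqref{lcomp}. The convergence $\omega(t)\to f^*\omega_Y$ in Proposition \ref{observation2} is measured against the \emph{fixed} metric $\omega_0$, which is not uniformly comparable to $\omega(t)$ because the fibres of $f$ collapse at rate $e^{-t}$, so \eqref{lcomp} cannot be read off from a bare $(1,1)$-form inequality; instead one must use the adiabatic structure of $\omega(t)$ on compact subsets of $X_{reg}$ — Proposition \ref{facts}(3) together with the semi-flat fibre metrics, as in \cite{FZ,TWY} — to see that the horizontal part of $\omega(t)$ dominates $(1-\eta_t)f^*\omega_Y$ while the cross terms are negligible. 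This is exactly the estimate carried out in the proof of \cite[Lemma 3.6]{ZyZz1}. A minor point is that $\sigma^t$ may enter $f^{-1}(B_{\epsilon^L})$ more than once, but only that its first entry point and last exit point lie within $d_\infty$-distance $\epsilon/2$ (by \eqref{g1}) is used, so this causes no difficulty.
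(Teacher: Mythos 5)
The key estimate \eqref{lcomp} that your proof hinges on is exactly the part you leave unproved, and you resolve it by citing the proof of \cite[Lemma~3.6]{ZyZz1} --- which is precisely where the paper defers the proof of Lemma~\ref{geod3} itself. So as written, the argument is circular: you cannot justify the lemma by appealing to the source whose proof of the lemma you are trying to reproduce.

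More importantly, the difficulty you flag is genuine. The statement $L_{\omega(t)}(\mu)\ge(1-\eta_t)L_{\omega_Y}(f(\mu))$ for arbitrary subarcs $\mu$ is equivalent (pointwise, over $X_{\epsilon^L}$) to $f^*\omega_Y\le(1+\eta_t')\,\omega(t)$ as forms, and $C^0(\omega_0)$-convergence does \emph{not} imply this. Schematically, in a $2\times 2$ model with horizontal block $p\approx 1$, vertical block $d=\delta$, and cross term $c$ with $|c|^2\lesssim p\delta$ (as allowed by positivity of $\omega(t)$), the difference $\omega(t)-f^*\omega_Y$ has $\omega_0$-norm $O(\sqrt{\delta})\to0$, yet $\omega(t)\ge(1-\eta)f^*\omega_Y$ fails for every fixed $\eta<1$ once $|c|$ is comparable to $\sqrt{p\delta}$. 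So establishing \eqref{lcomp} would require controlling the mixed horizontal-vertical terms of $\omega(t)$ at a rate faster than the geometric mean of the horizontal and vertical eigenvalues --- a refinement that Propositions~\ref{observation2} and~\ref{facts}(3) do not by themselves supply, and that you do not carry out.

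The good news is that Lemma~\ref{geod3} does not need any such comparison: it follows by modifying a minimizing geodesic, using only Lemma~\ref{geod2} and \eqref{g1}, with no lower bound on $d_{\omega(t)}$ at all. Take a minimizing $\omega(t)$-geodesic $\sigma^t$ from $x$ to $y$ with $x,y\in X_{\epsilon^L}$. If $\sigma^t\subset X_{\epsilon^L}$ set $\tau^t=\sigma^t$. Otherwise let $a$ be the first time $\sigma^t$ meets $\partial X_{\epsilon^L}$ and $b$ the last, so $\sigma^t|_{[0,a]},\sigma^t|_{[b,1]}\subset X_{\epsilon^L}$. Since $\sigma^t(a),\sigma^t(b)\in\partial X_{\epsilon^L}$, Lemma~\ref{geod2} produces a curve $\tau'\subset X_{\epsilon^L}$ joining them with
$$L_{\omega(t)}(\tau')\le d\bigl(f(\sigma^t(a)),f(\sigma^t(b))\bigr)+3\epsilon\le\frac{\epsilon}{2}+3\epsilon\le4\epsilon,$$
where the middle inequality is \eqref{g1} because both points of the base lie in $\overline{B_{\epsilon^L}}$. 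Setting $\tau^t:=\sigma^t|_{[0,a]}\ast\tau'\ast\sigma^t|_{[b,1]}\subset X_{\epsilon^L}$, one gets
$$L_{\omega(t)}(\tau^t)=L_{\omega(t)}(\sigma^t|_{[0,a]})+L_{\omega(t)}(\tau')+L_{\omega(t)}(\sigma^t|_{[b,1]})\le d_{\omega(t)}(x,y)+4\epsilon,$$
because the first and third summands together are at most $L_{\omega(t)}(\sigma^t)=d_{\omega(t)}(x,y)$. This is shorter than your route, avoids \eqref{lcomp} entirely, and is almost certainly the argument intended by the reference to \cite[Lemma~3.6]{ZyZz1}; the lower bound $d_\infty(f(x),f(y))\le d_{\omega(t)}(x,y)+5\epsilon$ that you try to prove first is then \emph{derived} in the proof of Theorem~\ref{result0}, Case~I, as a consequence of Lemma~\ref{geod3} applied to a confined near-minimizer, not the other way around.
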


Now we can obtain the diameter bound of $(X,\omega(t))$ by using Ricci curvature lower bound.
\begin{lem}\label{geod4}\cite[Lemma 3.7]{ZyZz1}
There exists a constant $D>0$ such that for any $t\in[1,\infty)$,
$$diam_{\omega(t)}(X)\le D.$$
\end{lem}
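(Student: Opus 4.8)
The plan is to carry over the argument of \cite[Lemma 3.7]{ZyZz1}, using Theorem \ref{theorem1} in place of the surface-specific fibre asymptotics of \cite{ST06,He}; the only place where the hypothesis on Ricci curvature enters is the control of a shrinking neighbourhood of the singular fibre. For $t$ in a compact interval $[1,T]$ the diameter is bounded by continuity and compactness, so the content is the behaviour as $t\to\infty$.

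\emph{Reduction to the singular fibre.} Assume as in the surrounding discussion that $S=\{s_1\}$, and for a small $\epsilon>0$ (to be fixed at the end via Theorem \ref{theorem1}) write $X=X_\epsilon\cup f^{-1}(B_\epsilon)$ with $X_\epsilon=f^{-1}(Y\setminus B_\epsilon)$. Since $\overline{X_\epsilon}\subset\subset X_{reg}$, Proposition \ref{facts}(3) gives $C^{-1}\omega_t\le\omega(t)\le C\omega_t$ on $X_\epsilon$ for $t\ge T_\epsilon$, and as $\omega_t\to f^*\chi$ while $(Y\setminus B_\epsilon,\chi)$ has bounded diameter, one gets $\mathrm{diam}_{\omega(t)}(X_\epsilon)\le C(\epsilon)$ for all $t\ge T_\epsilon$, with $C(\epsilon)$ independent of $t$. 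Hence it suffices to bound $\rho(t):=\sup_{x\in f^{-1}(B_\epsilon)}d_{\omega(t)}(x,X_\epsilon)$ uniformly in $t$: then $\mathrm{diam}_{\omega(t)}(X)\le C(\epsilon)+2\rho(t)$.

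\emph{The singular fibre via the Ricci lower bound.} I would argue by contradiction: suppose $\rho(t_i)\to\infty$ along some $t_i\to\infty$, and pick $p_i$ nearly realizing the supremum, so that $B_{\omega(t_i)}(p_i,\rho(t_i)/2)\subseteq f^{-1}(B_\epsilon)$. By Proposition \ref{facts}(1), $\mathrm{Vol}_{\omega(t_i)}\big(B_{\omega(t_i)}(p_i,\rho(t_i)/2)\big)\le Ce^{-(n-1)t_i}\int_{f^{-1}(B_\epsilon)}\Omega=:C(\epsilon)e^{-(n-1)t_i}$, where $C(\epsilon)\to 0$ as $\epsilon\to 0$; on the other hand $\mathrm{Vol}_{\omega(t_i)}(X)=[\omega(t_i)]^n/n!\asymp e^{-(n-1)t_i}$ with a fixed positive constant, so $f^{-1}(B_\epsilon)$ occupies a proportion of the total volume that is small uniformly in $t_i$, and can be made arbitrarily small by taking $\epsilon$ small (here Theorem \ref{theorem1} also guarantees $\mathrm{diam}_{d_\infty}(B_\epsilon)$ is as small as we wish, which is what makes the choice of $\epsilon$ legitimate). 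Feeding the uniform lower bound on $\mathrm{Ric}(\omega(t))$ into the Bishop--Gromov relative volume comparison, together with a lower volume bound for a small fixed ball $B_{\omega(t_i)}(p_i,r)$ coming from $\omega(t)^n\ge C^{-1}e^{-(n-1)t}\Omega$ and the uniform geometry of the collapse, produces a contradiction. Combined with Lemmas \ref{geod1}--\ref{geod3} and Proposition \ref{observation2} this yields the uniform bound on $\rho(t)$ and hence the asserted constant $D$.

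\emph{Main obstacle.} The delicate point is precisely the lower volume bound for small balls centred in $f^{-1}(B_\epsilon)$ that does not decay faster than the ambient collapsing rate $e^{-(n-1)t}$ --- a relative non-collapsing statement near the singular fibre --- since a Ricci lower bound alone never bounds diameters (thin flat tori). The fibration structure must be used essentially here: the horizontal directions do not collapse ($\omega(t)\ge C^{-1}f^*\chi$), the total collapsing rate $e^{-(n-1)t}$ is fixed, and by Theorem \ref{theorem1} the base has finite diameter with arbitrarily thin neighbourhoods of the singular value. Making the Bishop--Gromov comparison close while tracking these ingredients is where the real work lies; once done, the remaining steps are routine and identical to \cite[Section 3]{ZyZz1}.
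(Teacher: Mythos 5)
Your reduction to controlling a shrinking neighbourhood of the singular fibre, and your identification of the relevant ingredients (Lemma~\ref{geod2}, Proposition~\ref{facts}(1) and (3), and the uniform Ricci lower bound via Bishop--Gromov), agree with what the paper does. But you stop at exactly the point where the argument actually has to close, and the obstacle you flag --- needing a volume lower bound for a small ball $B_{\omega(t_i)}(p_i,r)$ centred near the singular fibre --- is one that the paper's proof never has to face, precisely because it sets up the Bishop--Gromov comparison differently.

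The paper's trick is to compare two \emph{concentric} balls about the farthest point. Let $x_t$ realise $R_t=\sup_{x\in X\setminus X_{\epsilon^L}}d_{\omega(t)}(x,X_{\epsilon^L})$. Then $B_{\omega(t)}(x_t,R_t)\subset X\setminus X_{\epsilon^L}$, while (using the uniform diameter bound $D_1$ for $X_{\epsilon^L}$ from Lemma~\ref{geod2}) the slightly larger ball $B_{\omega(t)}(x_t,R_t+D_1)$ contains \emph{all} of $X_{\epsilon^L}$. Consequently
\[
\frac{\mathrm{Vol}_{\omega(t)}(B_{\omega(t)}(x_t,R_t+D_1))}{\mathrm{Vol}_{\omega(t)}(B_{\omega(t)}(x_t,R_t))}\ \ge\ \frac{\mathrm{Vol}_{\omega(t)}(X_{\epsilon^L})}{\mathrm{Vol}_{\omega(t)}(X\setminus X_{\epsilon^L})}\ \gtrsim\ \epsilon^{-1},
\]
where the last bound is Proposition~\ref{facts}(1) plus the fact that $X\setminus X_{reg}$ has real codimension $\ge2$: one gets $\mathrm{Vol}(X\setminus X_{\epsilon^L})\le\epsilon e^{-(n-1)t}$ while $\mathrm{Vol}(X_{\epsilon^L})\ge C_0^{-1}e^{-(n-1)t}$. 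No lower bound for a small metric ball near $p_i$ is ever needed, because the lower bound on the ratio comes from the \emph{global} non-collapse of the region $X_{\epsilon^L}$, which sits entirely inside the larger ball. On the other side, Bishop--Gromov with $\mathrm{Ric}\ge-(2n-1)\omega(t)$ gives
\[
\frac{\mathrm{Vol}(B(x_t,R_t+D_1))}{\mathrm{Vol}(B(x_t,R_t))}\ \le\ \frac{\int_0^{R_t+D_1}\sinh^{2n-1}v\,dv}{\int_0^{R_t}\sinh^{2n-1}v\,dv},
\]
and the right-hand side tends to $1$ as $R_t\to\infty$ with $D_1$ fixed. Comparing the two forces $R_t$ to stay bounded, and $\mathrm{diam}_{\omega(t)}(X)\le 2R_t+D_1$ finishes. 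So your proposal is missing the key geometric observation --- annular comparison around the farthest point, with the outer ball swallowing the non-collapsed region --- that turns the ingredients you correctly listed into a contradiction; as written, the ``delicate point'' you leave open is not something a Ricci lower bound alone can supply, and indeed the paper does not attempt to supply it.
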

\begin{proof}
To see the role of lower bound for Ricci curvature, we contain some details. Note that the compactness of $(\Sigma,d)$ and Lemma \ref{geod2} imply that, for sufficiently small $\epsilon$, there exists two constants $T$ and $D_1>0$ such that for all $t\ge T$
\begin{equation}\label{g4}
diam_{d_{\omega(t)}}(X_{\epsilon^L})\le D_1.
\end{equation}
Using Proposition \ref{facts}(1) and the fact that $X\setminus X_{reg}$ has real codimension $\ge2$ ($X\setminus X_{reg}$ is in fact a proper subvariety of $X$), up to possibly decreasing $\epsilon$ and increasing $L$, for all $t\ge T$ we have,
\begin{equation}
Vol_{\omega(t)}(X\setminus X_{\epsilon^L})\le e^{-(n-1)t}\epsilon.
\end{equation}
Let $x_t\in X\setminus X_{\epsilon^L}$ be a point achieves the maximal distance $R_t$ to $X_{\epsilon^L}$ in $(X,\omega(t))$, i.e.,
$$R_t=\sup_{x\in X\setminus X_{\epsilon^L}}\inf d_{\omega(t)}(x,X_{\epsilon^L})=\sup_{x\in X\setminus X_{\epsilon^L}}\inf_{y\in X_{\epsilon^L}} d_{\omega(t)}(x,y).$$
Note that $B_{\omega(t)}(x_t,R_t)\subset X\setminus X_{\epsilon}$ and $X_{\epsilon^L}\subset B_{\omega(t)}(x_t,R_t+D_1)$. On the one hand, for some fixed constant $C_0\ge1$ we have
\begin{equation}\label{g5}
\frac{Vol_{\omega(t)}(B_{\omega(t)}(x_t,R_t+D_1))}{Vol_{\omega(t)}(B_{\omega(t)}(x_t,R_t))}\ge\frac{Vol_{\omega(t)}(X_{\epsilon^L})}{Vol_{\omega(t)}(X\setminus X_{\epsilon^L})}\ge \frac{C_0^{-1}e^{-(n-1)t}-\epsilon e^{-(n-1)t}}{\epsilon e^{-(n-1)t}}\ge \frac{1}{2C_0}\epsilon^{-1}.
\end{equation}
On the other hand, by the lower bound for Ricci curvature (assume without loss of generality $Ric(\omega(t))\ge-(2n-1)\omega(t)$) and volume comparison we have
\begin{equation}\label{g6}
\frac{Vol_{\omega(t)}(B_{\omega(t)}(x_t,R_t+D_1))}{Vol_{\omega(t)}(B_{\omega(t)}(x_t,R_t))}\le\frac{\int_0^{R_t+D_1}sinh^{2n-1}vdv}{\int_{0}^{R_t}sinh^{2n-1}vdv},
\end{equation}
where the right hand side of \eqref{g6} will converges to $1$ if $R_t\to\infty$. Combining \eqref{g5}, we know $R_t$ is uniform bounded from above for all $t\ge T$. But by triangle inequality we have
$$diam_{\omega(t)}(X)\le2R_t+D_1,$$
which implies the desired diameter upper bound for $(X,\omega(t))$.
\par Lemma \ref{geod4} is proved.
\end{proof}

\begin{lem}\label{geod5}\cite[Lemma 3.8]{ZyZz1}
There exist two constants $\epsilon>0$ and $T>0$ such that, after possibly increasing $L$, for all $\epsilon\le\epsilon_0$ and $t\ge T$ we have
$$diam_{\omega(t)}(X\setminus X_{\epsilon^L})\le6\epsilon.$$
\end{lem}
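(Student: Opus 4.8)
The plan is to show that $X\setminus X_{\epsilon^L}=f^{-1}(B_{\epsilon^L})$ collapses, in the metric $\omega(t)$, onto the smooth hypersurface $f^{-1}(\partial B_{\epsilon^L})$ lying over the circle $\partial B_{\epsilon^L}$, which avoids $s_1$. Precisely, after increasing $L$ I will establish, for $\epsilon\le\epsilon_0$ and $t\ge T_\epsilon$, two estimates: (i) $diam_{\omega(t)}\big(f^{-1}(\partial B_{\epsilon^L})\big)\le 2\epsilon$; and (ii) $d_{\omega(t)}\big(x,f^{-1}(\partial B_{\epsilon^L})\big)<2\epsilon$ for every $x\in f^{-1}(B_{\epsilon^L})$. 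Granting these, for $x,y\in f^{-1}(B_{\epsilon^L})$ one picks $x',y'\in f^{-1}(\partial B_{\epsilon^L})$ with $d_{\omega(t)}(x,x'),d_{\omega(t)}(y,y')<2\epsilon$ by (ii) and combines with (i): $d_{\omega(t)}(x,y)\le d_{\omega(t)}(x,x')+d_{\omega(t)}(x',y')+d_{\omega(t)}(y',y)<6\epsilon$.

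For (i): $\partial B_{\epsilon^L}$ is a circle inside $Y\setminus S$, so $K:=f^{-1}(\partial B_{\epsilon^L})$ is compact in $X_{reg}$ for each fixed $\epsilon$, and on $K$ I may use Proposition \ref{observation2} ($\omega(t)\to f^*\omega_Y$ in $C^0(K,\omega_0)$) together with Proposition \ref{facts}(3) ($\omega(t)\le C\omega_t\le Ce^{-t}\omega_0$ on $K$). The latter gives $diam_{\omega(t)}(X_s)\le Ce^{-t/2}$ for $|s|=\epsilon^L$, which is $\le\epsilon/2$ for $t$ beyond some $T_\epsilon$; the former lets me bound the $\omega(t)$-length of a lift of a short arc of $\partial B_{\epsilon^L}$ by its $\omega_Y$-length plus a small error, and, by the asymptotics of $\omega_Y$ in Theorem \ref{theorem1}, the $\omega_Y$-length of any arc of $\partial B_{\epsilon^L}$ is bounded by $C\,\epsilon^{\beta L}(\log(1/\epsilon))^{(N-1)/2}\le\epsilon/2$ once $\beta L>1$ and $\epsilon_0$ is small. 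Concatenating a fibre path, a lifted arc, and a fibre path then joins any two points of $K$ within $2\epsilon$.

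Estimate (ii) is the \emph{crux}, and the place where the Ricci lower bound and Lemma \ref{geod4} are used. Near $X_{s_1}$ there is \emph{no} upper bound for $\omega(t)$ (the limit $f^*\omega_Y$ blows up and the $C^0$-convergence of Proposition \ref{observation2} degenerates), so distances there cannot be estimated by explicit curves; instead I argue by volume comparison. Suppose $x\in f^{-1}(B_{\epsilon^L})$ and $B_{\omega(t)}(x,2\epsilon)\subset f^{-1}(B_{\epsilon^L})$. By Proposition \ref{facts}(1), $Vol_{\omega(t)}\big(f^{-1}(B_{\epsilon^L})\big)\le Ce^{-(n-1)t}\int_{f^{-1}(B_{\epsilon^L})}\Omega$, and since by Theorem \ref{theorem1} the density of $f_*\Omega$ is $\le C|s|^{-2(1-\beta)}(-\log|s|)^{N-1}$, the last integral is $\le C\,\epsilon^{2\beta L}(\log(1/\epsilon))^{N-1}$. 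On the other hand, using $Ric(\omega(t))\ge-(2n-1)\omega(t)$, the Bishop--Gromov inequality, $diam_{\omega(t)}(X)\le D$ (Lemma \ref{geod4}), and $Vol_{\omega(t)}(X)\ge C^{-1}e^{-(n-1)t}$ (Proposition \ref{facts}(1)),
\[
Vol_{\omega(t)}\big(B_{\omega(t)}(x,2\epsilon)\big)\ \ge\ \frac{\int_0^{2\epsilon}\sinh^{2n-1}v\,dv}{\int_0^{D}\sinh^{2n-1}v\,dv}\,Vol_{\omega(t)}(X)\ \ge\ \frac{C^{-1}\int_0^{2\epsilon}\sinh^{2n-1}v\,dv}{\int_0^{D}\sinh^{2n-1}v\,dv}\,e^{-(n-1)t}.
\]
Since $\int_0^{2\epsilon}\sinh^{2n-1}v\,dv$ is comparable to $\epsilon^{2n}$ as $\epsilon\to0$, whereas $\epsilon^{2\beta L}(\log(1/\epsilon))^{N-1}=o(\epsilon^{2n})$ as soon as $\beta L>n$, these two bounds for $Vol_{\omega(t)}\big(B_{\omega(t)}(x,2\epsilon)\big)$ are incompatible once $L$ has been chosen large and $\epsilon_0$ small — and, crucially, this comparison is uniform in $t$ because the factor $e^{-(n-1)t}$ cancels. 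Hence $B_{\omega(t)}(x,2\epsilon)\not\subset f^{-1}(B_{\epsilon^L})$; a curve of length $<2\epsilon$ issuing from $x$ therefore leaves $f^{-1}(B_{\epsilon^L})$ and so crosses $f^{-1}(\partial B_{\epsilon^L})$, which produces a point of $f^{-1}(\partial B_{\epsilon^L})$ at $\omega(t)$-distance $<2\epsilon$ from $x$.

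The main obstacle is precisely estimate (ii): in the thin tube around the singular fibre one has no direct control of $\omega(t)$-distances, and the substitute is the Bishop--Gromov/volume argument above, which works because the uniform diameter bound of Lemma \ref{geod4} together with the Ricci lower bound forces every $2\epsilon$-ball to occupy a definite ($\sim\epsilon^{2n}$) fraction of $Vol_{\omega(t)}(X)$, while the tube $f^{-1}(B_{\epsilon^L})$ occupies only an $O\big(\epsilon^{2\beta L}(\log(1/\epsilon))^{N-1}\big)$ fraction once $L$ is large; the quantitative asymptotics of $\omega_Y$ from Theorem \ref{theorem1} are what make this fraction beat $\epsilon^{2n}$. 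Finally, the quantifiers are arranged by first fixing $L$ large (depending on $n$, on $\beta$ and $N$, on $D$, and on the constants in Proposition \ref{facts} and Theorem \ref{theorem1}) and $\epsilon_0$ small so that (ii) holds for all $\epsilon\le\epsilon_0$ and all $t\ge1$, and then, for each such $\epsilon$, taking $T_\epsilon$ large enough for (i); this gives the asserted bound with $T=T_\epsilon$.
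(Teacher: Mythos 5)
Your proof is correct and follows essentially the same route as the paper's: the crux step is a Bishop--Gromov volume comparison pitting the $\epsilon^{2\beta L}(\log(1/\epsilon))^{N-1}$-decay of the tube volume (from Theorem~\ref{theorem1}) against the lower volume bound forced by the Ricci bound and Lemma~\ref{geod4}'s diameter bound, followed by a triangle inequality through $\partial X_{\epsilon^L}$ using Lemma~\ref{geod2}. The only cosmetic difference is that the paper directly sharpens the bound on the quantity $R_t$ by reusing the ratio inequality from the proof of Lemma~\ref{geod4}, whereas you run a contradiction argument about a $2\epsilon$-ball being contained in the tube; the ingredients and the final $\le 6\epsilon$ estimate are the same.
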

\begin{proof}
Let $R_t$ be the same as in the proof Lemma \ref{geod4}. Since by Lemma \ref{geod4} $R_t$ is bounded from above, the arguments in Lemma \ref{geod4} further imply that, after possibly increasing $L$ to $L^{2n+1}$, there exist constant $T>0$ and $C_1,C_2\ge1$ such that for all $t\ge T$,
$$C_1^{-1}\epsilon^{-(2n+1)}\le\frac{\int_0^{R_t+D_1}sinh^{2n-1}vdv}{\int_{0}^{R_t}sinh^{2n-1}vdv}\le \frac{C_2}{\int_{0}^{R_t}sinh^{2n-1}vdv},$$
so,
$$\int_{0}^{R_t}sinh^{2n-1}vdv\le C_1C_2\epsilon^{2n+1},$$
which implies
$$R_t\le\epsilon$$
Now for any $x\in X\setminus X_{\epsilon^L}$, we choose a $\bar x\in \partial X_{\epsilon^L}=f^{-1}(\partial B_{\epsilon^L})$ with $d_{\omega(t)}(x,\bar x)\le R_t\le\epsilon$. Assume we are given arbitrary two points $x,y\in X\setminus X_{\epsilon^L}$ and the corresponding points $\bar x,\bar y$. By Lemma \ref{geod2} and \eqref{g1} we have
$$d_{\omega(t)}(\bar x,\bar y)\le d(f(\bar x),f(\bar y))+3\epsilon\le4\epsilon.$$
By triangle inequality we have
$$d_{\omega(t)}(x,y)\le d_{\omega(t)}(x,\bar x)+d_{\omega(t)}(\bar x,\bar y)+d_{\omega(t)}(\bar y,y)\le6\epsilon.$$
Lemma \ref{geod5} is proved.
\end{proof}

Now we are ready to prove Theorem \ref{result0}.\\

\emph{Completion of proof of Theorem \ref{result0}.}
Define a map $g:Y\to X$ by choosing $g(s)\in X_s=f^{-1}(s)$ for every $s\in Y$. By Definitions, it suffices to show that for any small $\epsilon>0$, there exists a $T_{\epsilon}>0$ such that for all $t\ge T_{\epsilon}$, the followings hold.
\begin{itemize}
\item[(1)] $|d_{\omega(t)}(x,y)-d_\infty(f(x),f(y))|\le8\epsilon$ for all $x,y\in X$;
\item[(2)] $|d_{\omega(t)}(g(s),g(r))-d_\infty(s,r)|\le8\epsilon$ for all $s,r\in Y$;
\item[(3)] $d_{\omega(t)}(x,g(f(x)))\le8\epsilon$ for all $x\in X$;
\item[(4)] $d_\infty(s,f(g(s)))\le8\epsilon$ for all $s\in Y$.
\end{itemize}
We firstly note that $f(g(s))=s$ for all $s\in Y$ and hence item (4) holds trivially. \\
\underline{Proof of item (3)}: If $x\in X_{\epsilon^L}$, item (3) follows from the uniform collapsing of smooth fibers over $Y_{\epsilon}$ implied by Proposition \ref{facts}(3); if $x\in X\setminus X_{\epsilon^L}$, since $g(f(x))\in X\setminus X_{\epsilon^L}$, item (3) follows from Lemma \ref{geod5}.
\par The proofs of items (1) and (2) are same. Here we only prove item (1).\\
\underline{Proof of item (1)}: Assume we are given two arbitrary points $x,y\in X$.\\
\emph{Case I}: $x,y\in X_{\epsilon^L}$. In this case, by Lemma \ref{geod2} we see
\begin{equation}\label{g7}
d_{\omega(t)}(x,y)\le d_\infty(f(x),f(y))+3\epsilon.
\end{equation}
On the other hand, by Lemma \ref{geod3} we fix a piecewise smooth curve $\tau^t\subset X_{\epsilon^L}$ connecting $x,y$ with
$$L_{\omega(t)}(\tau^t)\le d_{\omega(t)}(x,y)+4\epsilon.$$
Since $\tau^t\subset X_{\epsilon^L}$, we apply Proposition \ref{observation2} to find a constant $T_{\epsilon}>0$ such that for all $t\ge T_{\epsilon}$ we have
$$L_{\omega_{Y}}(f(\tau^t))\le L_{\omega(t)}(\tau^t)+\epsilon$$
which implies
\begin{equation}\label{g8}
d_\infty(f(x),f(y))\le L_{\omega_{Y}}(f(\tau^t))\le d_{\omega(t)}(x,y)+4\epsilon.
\end{equation}
Combining \eqref{g7} and \eqref{g8}, we obtain item (1) in this case.\\
\emph{Case II}: $x\in X\setminus X_{\epsilon^L}$, $y\in X_{\epsilon}$. We fix a $\bar x\in \partial X_{\epsilon}=f^{-1}(\partial B_{\epsilon^L})$ with $d_{\omega(t)}(x,\bar x)\le\epsilon$ as in the proof of Lemma \ref{geod5}. Then by Case I we know
\begin{equation}\label{g9}
|d_{\omega(t)}(\bar x,y)-d_\infty(f(\bar x),f(y))|\le4\epsilon.
\end{equation}
So, by the triangle inequality and the fact that $d_\infty(f(x),f(\bar x))\le\frac{\epsilon}{2}$ by \eqref{g1} we have
\begin{align}\label{g10}
d_{\omega(t)}(x,y)&\le d_{\omega(t)}(x,\bar x)+d_{\omega(t)}(\bar x,y)\nonumber\\
&\le\epsilon+d_\infty(f(\bar x),f(y))+4\epsilon\nonumber\\
&\le d_\infty(f(x),f(y))+d_\infty(f(\bar x),f(x))+5\epsilon\nonumber\\
&\le d_\infty(f(x),f(y))+6\epsilon
\end{align}
and similarly,
\begin{equation}\label{g11}
d_\infty(f(x),f(y))\le d_{\omega(t)}(x,y)+6\epsilon
\end{equation}
Combining \eqref{g10} and \eqref{g11}, we obtain item(1) in this case.\\
\emph{Case III}: $x,y\in X\setminus X_{\epsilon^L}$. By \eqref{g1} and Lemma \ref{geod5} we know
$$d_{\omega(t)}(x,y)+d_\infty(f(x),f(y))\le7\epsilon,$$
from which item (1) follows in this case.
\par Combining Cases I-III, we have proved item (1).
\par Theorem \ref{result0} is proved.

\end{proof}

\section{Proofs of Theorems \ref{result} and \ref{result.1}}\label{proof}

This section contains proofs of Theorems \ref{result} and \ref{result.1}. Since the arguments are very similar, we will only discuss Theorem \ref{result}. The proof of Theorem \ref{result} needs more efforts than Theorem \ref{result0}. An outline is as follows. Firstly, we use Theorem \ref{theorem1} to determine the asymptotics of the generalized K\"ahler-Einstein current on $\tilde\Sigma$; secondly, under the assumption that Ricci curvature is uniformly bounded from below, we modify discussions in Section \ref{proof.1} to obtain a uniform diameter upper bound for the K\"ahler-Ricci flow on $\tilde X$; finally we apply some results and arguments of Cheeger-Colding \cite{CC1,CC2} and Gross-Tosatti-Zhang \cite{GTZ13,GTZ,ToZy17} to prove Gromov-Hausdorff convergence.

\subsection{Generalized K\"ahler-Einstein current on $\tilde\Sigma$}\label{subs3.1'} Recall we are in the Setup \ref{setup3}. We can also fix a smooth positive volume form $\Omega'$ on $X'$ with $\sqrt{-1}\partial\bar\partial\log\Omega'=f'^*\chi'$. Then $\Sigma'$ admits a unique generalized K\"ahler-Einstein current $\omega_{\Sigma'}=\chi'+\sqrt{-1}\partial\bar\partial\psi'$ solved by
\begin{equation}\label{GKE0}
\omega_{\Sigma'}=\chi'+\sqrt{-1}\partial\bar\partial\psi'=m^{-1}e^{\psi'}f'_*\Omega'.
\end{equation}
In the following, we identify $c_1(X)$ on $X$ with its pullback on $\tilde X$, $\chi$ on $\Sigma$ with its pullback on $\tilde\Sigma$ and so on. Note that $\tilde S:=(S\times\Sigma')\cup (\Sigma\times S')$ has simple normal crossing support.
\par According to \cite{ST06,ST12}, the generalized K\"ahler-Einstein current $\omega_{\tilde\Sigma}$ on $\tilde\Sigma$ is solved by
\begin{equation}\label{GKE1}
(\chi+\chi'+\sqrt{-1}\partial\bar\partial\tilde\psi)^2=\frac{1}{\binom{n+m}{2}}e^{\tilde\psi}\tilde f_*(\Omega\wedge\Omega').
\end{equation}
Meanwhile, easily we have
$$\tilde f_*(\Omega\wedge\Omega')=f_*\Omega\wedge f'_*\Omega'.$$
Therefore, it is not hard to see that the unique solution $\tilde\psi$ to \eqref{GKE1} is given by
$$\tilde\psi=\psi+\psi'+c_{n,m},$$
where $\psi'$ is the unique solution to \eqref{GKE0} and $c_{n,m}:=\log\frac{(n+m)(n+m-1)}{nm}$. Consequently,
\begin{align}\label{GKE2}
\omega_{\tilde\Sigma}&=\chi+\chi'+\sqrt{-1}\partial\bar\partial\tilde\psi\nonumber\\
&=\omega_{\Sigma}+\omega_{\Sigma'}.
\end{align}

By Theorem \ref{theorem1} we know $\omega_{\tilde\Sigma}$ on $\tilde\Sigma$ near point in $(\Sigma\setminus S)\times S'$ is locally equivalent to
$$\sqrt{-1}\left(ds\wedge d\bar s+|s'|^{-2(1-\beta')}(-\log|s'|)^{N'}ds'\wedge d\bar {s'}\right),$$
while near point in $S\times S'$ is locally equivalent to
$$\sqrt{-1}\left(|s|^{-2(1-\beta)}(-\log|s|)^{N}ds\wedge d\bar s+|s'|^{-2(1-\beta')}(-\log|s'|)^{N'}ds'\wedge d\bar {s'}\right).$$

From now on, we assume without loss of generality that $S=\{s_1\}$ and $S'=\{s_1'\}$. Denote $\Sigma_{reg}=\Sigma\setminus\{s_1\}$, $\Sigma_{reg}'=\Sigma'\setminus\{s_1'\}$ and $\tilde\Sigma_{reg}=\Sigma_{reg}\times\Sigma_{reg}'$.
\par Define a metric $\tilde d_\infty$ on $\tilde\Sigma$ as follows. For any $s,r\in\Sigma_{reg}$ and $s'\in\Sigma_{reg}'$, set
\begin{itemize}
\item[(1)]$\tilde d_\infty=d_{\tilde\omega_{\tilde\Sigma}}$ on $\Sigma_{reg}\times\Sigma_{reg}'$;
\item[(2)]$\tilde d_\infty((s,s_1'),(r,s_1'))=d_{\omega_{\Sigma}}(s,r)$;
\item[(3)]$\tilde d_\infty((s_1,s_1')(s,s_1'))=\lim_{k\to\infty}d_{\omega_{\Sigma}}(s_{1k},s)$ for sequence $s_{1k}\in\Sigma_{reg}$ converges to $s_1$a;
\item[(4)]$\tilde d_\infty((s,s_1'),(r,s'))=\lim_{k\to\infty}d_{\omega_{\tilde\Sigma}}((s,s_{1k}'),(r,s'))$ for a sequence $s_{1k}'\in\Sigma_{reg}'$ converges to $s_1'$;
\item[(5)]$\tilde d_\infty((s_1,s_1'),(s,s'))=\lim_{k\to\infty}d_{\omega_{\tilde\Sigma}}((s_{1k},s_{1k}'),(s,s'))$ for a sequence $s_{1k}\in\Sigma_{reg}$ converges to $s_1$ and a sequence $s_{1k}'\in\Sigma_{reg}'$ converges to $s_1'$;
\item[(6)]In other cases, we define $\tilde d_\infty$ by interchanging the role of $\Sigma$ and $\Sigma'$.
\end{itemize}
The above definition does not depend on the choice of approximating sequence $s_{1k},s_{1k}'$. For example, in case (5), if we use different sequences $s_{1l},s_{1l}'$, then
\begin{align}
&|d_{\omega_{\tilde\Sigma}}((s_{1k},s_{1k}'),(s,s'))-d_{\omega_{\tilde\Sigma}}((s_{1l},s_{1l}'),(s,s'))|\nonumber\\
&\le d_{\omega_{\tilde\Sigma}}((s_{1k},s_{1k}'),(s_{1l},s_{1l}'))\nonumber\\
&\le d_{\omega_{\Sigma}}(s_{1k},s_{1l})+d_{\omega_{\Sigma'}}(s_{1k}',s_{1l}')\nonumber\\
&\to0\nonumber,
\end{align}
where in the last inequality we have used Cauchy inequality for product manifold with product Riemannian metric and the last convergence takes place uniformly as $k,l\to\infty$.
\par In conclusion, $(\tilde\Sigma,\tilde d_\infty)$ is a compact length metric space and is the metric completion of $(\tilde\Sigma_{reg},\omega_{\tilde\Sigma})$. We will see that $(\tilde\Sigma,\tilde d_\infty)$ is the unique limiting space in Theorem \ref{result}.

\subsection{Diameter bound of $\omega(t)$}
Denote $B_{\delta}:=B_{\chi}(s_1,\delta)=\{s\in\Sigma|d_{\chi}(s,s_1)\le\delta\}$, $B'_{\delta}:=B_{\chi'}(s_1',\delta)$, $\Sigma_\delta:=\Sigma\setminus B_{\delta}$, $\Sigma_{\delta}':=\Sigma'\setminus B_{\delta}'$, $X_\delta:=f^{-1}(\Sigma_\delta)$, $\tilde X_{\delta}:=f'^{-1}(\Sigma_{\delta}')$.
\par According to the asymptotics of $\omega_{\tilde\Sigma}$, we fix a sufficiently large $L$ (we are free to increase $L$ if necessary) such that for any $\epsilon<\frac{1}{2}$, we have
\begin{equation}\label{diamonbase}
diam_{\tilde d_\infty}(B_{\epsilon^L}\times B_{\epsilon^L}')\le diam_{d_\infty}(B_{\epsilon^L})+diam_{d_\infty'}(B_{\epsilon^L}')\le\frac{\epsilon}{4}
\end{equation}
and
\begin{equation}\label{diamonbase1}
L_{d_\infty}(\partial B_{\epsilon^L})+L_{d_\infty'}(\partial B_{\epsilon^L}')\le\frac{\epsilon}{4}.
\end{equation}
The above first inequality in \eqref{diamonbase} holds since $\tilde d_\infty$ is induced by product metric $\omega_{GKE}+\omega_{GKE}'$ on regular part. In particular, we have
\begin{equation}\label{GH1}
d_{GH}((\tilde\Sigma,\tilde d_\infty),(\Sigma_{\epsilon^L}\times\Sigma_{\epsilon^L}',\tilde d_\infty))\le\frac{\epsilon}{4}.
\end{equation}
We need the following
\begin{lem}\label{lemonbase}
For any points $(s,s'),(r,r')\in\Sigma_{\epsilon^L}\times\Sigma_{\epsilon^L}'$, we can choose a piecewise smooth curve $\tilde\gamma\subset\Sigma_{\epsilon^L}\times\Sigma_{\epsilon^L}'$ connecting $(s,s')$ and $(r,r')$ with
\begin{equation}\label{diamonbase3.1}
L_{\tilde d}(\tilde\gamma)\le\tilde d((s,s'),(r,r'))+\epsilon.
\end{equation}
\end{lem}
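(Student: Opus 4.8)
The plan is to reduce the product-manifold statement to the two one-factor statements, which are already understood through Theorem \ref{theorem1} and the analysis of the metric completion of a generalized K\"ahler-Einstein current on a Riemann surface. Recall that on the regular part $\tilde\Sigma_{reg}=\Sigma_{reg}\times\Sigma_{reg}'$ the current $\omega_{\tilde\Sigma}$ equals the product metric $\omega_\Sigma+\omega_{\Sigma'}$ by \eqref{GKE2}, so the induced length distance $\tilde d$ on $\tilde\Sigma_{reg}$ is exactly the product distance of $(\Sigma_{reg},d_{\omega_\Sigma})$ and $(\Sigma'_{reg},d_{\omega_{\Sigma'}})$. Hence a distance-minimizing (or near-minimizing) curve between $(s,s')$ and $(r,r')$ can be taken to be a concatenation of a curve in the first factor from $s$ to $r$ (at fixed second coordinate) and a curve in the second factor from $s'$ to $r'$ (at fixed first coordinate); moreover for the product metric one has $\tilde d((s,s'),(r,r'))^2 = d_{\omega_\Sigma}(s,r)^2 + d_{\omega_{\Sigma'}}(s',r')^2$, and in particular $\tilde d((s,s'),(r,r')) \ge \max\{d_{\omega_\Sigma}(s,r), d_{\omega_{\Sigma'}}(s',r')\}$ while $\tilde d((s,s'),(r,r')) \le d_{\omega_\Sigma}(s,r) + d_{\omega_{\Sigma'}}(s',r')$.

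The key steps would be as follows. First, invoke Lemma \ref{geod1} (the single-factor statement, valid for each of $\Sigma$ and $\Sigma'$ by Theorem \ref{theorem1}): for $s,r\in\Sigma_{\epsilon^L}$ there is a piecewise smooth curve $\gamma_1\subset\Sigma_{\epsilon^L}$ from $s$ to $r$ with $L_{\omega_\Sigma}(\gamma_1)\le d_{\omega_\Sigma}(s,r)+\frac{\epsilon}{2}$, and similarly a curve $\gamma_2\subset\Sigma'_{\epsilon^L}$ from $s'$ to $r'$ with $L_{\omega_{\Sigma'}}(\gamma_2)\le d_{\omega_{\Sigma'}}(s',r')+\frac{\epsilon}{2}$ (absorbing constants into $L$ and shrinking $\epsilon$ as needed). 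Second, form $\tilde\gamma := (\gamma_1\times\{s'\})\cdot(\{r\}\times\gamma_2)$, which is a piecewise smooth curve in $\Sigma_{\epsilon^L}\times\Sigma'_{\epsilon^L}$ connecting $(s,s')$ to $(r,r')$. Third, compute its length in the product metric: since each of the two pieces is horizontal in one factor, $L_{\tilde d}(\tilde\gamma) = L_{\omega_\Sigma}(\gamma_1) + L_{\omega_{\Sigma'}}(\gamma_2) \le d_{\omega_\Sigma}(s,r) + d_{\omega_{\Sigma'}}(s',r') + \epsilon$. Fourth, bound $d_{\omega_\Sigma}(s,r) + d_{\omega_{\Sigma'}}(s',r')$ from above by $\tilde d((s,s'),(r,r'))$: this is the only slightly delicate point, and it follows from the product structure — one projects a near-minimizing curve for $\tilde d((s,s'),(r,r'))$ to each factor, the projections being length-nonincreasing for the product metric, so $d_{\omega_\Sigma}(s,r)\le\tilde d((s,s'),(r,r'))$ and likewise $d_{\omega_{\Sigma'}}(s',r')\le\tilde d((s,s'),(r,r'))$; combined with Pythagoras $\tilde d((s,s'),(r,r'))^2 = d_{\omega_\Sigma}(s,r)^2 + d_{\omega_{\Sigma'}}(s',r')^2$ this gives $d_{\omega_\Sigma}(s,r)+d_{\omega_{\Sigma'}}(s',r') \le \sqrt{2}\,\tilde d((s,s'),(r,r'))$. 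To get the clean bound \eqref{diamonbase3.1} with additive error $\epsilon$ one does not even need the $\sqrt2$: one instead runs Lemma \ref{geod1} directly for the product by concatenating near-geodesics that follow a genuine near-minimizer of $\tilde d$, reparametrized so that the first and second coordinates vary in turn, staying inside $\Sigma_{\epsilon^L}\times\Sigma'_{\epsilon^L}$ — the point being that the single-factor asymptotics of Theorem \ref{theorem1} guarantee each coordinate curve can be pushed off the bad fibers at the cost of only $\frac{\epsilon}{2}$ in length.

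I expect the main obstacle to be purely bookkeeping: making sure the curve $\tilde\gamma$ genuinely stays inside $\Sigma_{\epsilon^L}\times\Sigma'_{\epsilon^L}$ (not merely in a neighbourhood of it) while simultaneously controlling its length, which requires applying the single-factor pushing-off argument of Lemma \ref{geod1} to each coordinate and checking that the concatenation does not leave the region — here one uses that $\Sigma_{\epsilon^L}$ and $\Sigma'_{\epsilon^L}$ are each connected and that the near-geodesics produced by Lemma \ref{geod1} already avoid $B_{\epsilon^L}$ and $B'_{\epsilon^L}$. The identity $L_{\tilde d}(\tilde\gamma)=L_{\omega_\Sigma}(\gamma_1)+L_{\omega_{\Sigma'}}(\gamma_2)$ for horizontal pieces and the projection inequalities are standard facts about Riemannian product metrics, so I would state them briefly and cite the product-metric computation already used after \eqref{GKE2} rather than belabour them. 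Once Lemma \ref{lemonbase} is in hand, it plays exactly the role that Lemma \ref{geod1} played in Section \ref{proof.1}, feeding into the construction of near-geodesics upstairs on $\tilde X$ and ultimately into the Gromov-Hausdorff convergence argument of Theorem \ref{result}.
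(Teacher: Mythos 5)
Your plan has a genuine gap: the L-shaped concatenation $\tilde\gamma=(\gamma_1\times\{s'\})\cdot(\{r\}\times\gamma_2)$ has length $L_{\omega_\Sigma}(\gamma_1)+L_{\omega_{\Sigma'}}(\gamma_2)\approx d_{\omega_\Sigma}(s,r)+d_{\omega_{\Sigma'}}(s',r')$, and in a Riemannian product $d_1+d_2$ can be as large as $\sqrt{2}\,\tilde d$. You notice this, but the fix you gesture at, namely ``reparametrizing a genuine near-minimizer so that the first and second coordinates vary in turn,'' suffers from exactly the same defect: as soon as the coordinates move in turn rather than simultaneously, the curve's speed changes from $\ell^2$ to $\ell^1$ and the $\sqrt 2$ loss reappears. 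That multiplicative error is fatal here, because Lemma~\ref{lemonbase} is later used (via \eqref{isom1}) to show $d_Z(h(\tilde s_1),h(\tilde s_2))\le\tilde d_\infty(\tilde s_1,\tilde s_2)$; a $\sqrt 2$-Lipschitz comparison would not combine with \eqref{isom2} to give an isometry.

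The paper avoids this by never decoupling the two coordinates. It starts from a single near-minimizer $\tilde\gamma_0=(\gamma_0,\gamma_0')$ of $\tilde d$ (with $L_{\omega_{\tilde\Sigma}}(\tilde\gamma_0)\le\tilde d+\epsilon/4$) and surgers only the offending portions: on the parameter interval where $\gamma_0$ enters $B_{\epsilon^L}$ it replaces that arc by one running along $\partial B_{\epsilon^L}$, keeping $\gamma_0'$ unchanged on that interval (and symmetrically for the $\Sigma'$ factor). The point of the case analysis on the ordering of $u_1,u_2,u_1',u_2'$ is precisely to ensure that, on each modified interval, at most one coordinate is being altered, so the $\ell^2$ speed of the other coordinate cancels in the difference and the length increase is controlled by $\int\sqrt{\omega_\Sigma(\dot\gamma,\dot\gamma)}\,du\le L_{d_\infty}(\partial B_{\epsilon^L})$ (and its primed analogue), which is $\le\epsilon/4$ by \eqref{diamonbase1}. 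The elementary inequality $\sqrt{a+b}\le\sqrt a+\sqrt b$ is what makes this surgery estimate work without any loss of factor. Your remark that Lemma~\ref{geod1} itself gives the single-factor arcs is fine, and the connectedness/bookkeeping you flag is indeed handled in the paper by the arcs on $\partial B_{\epsilon^L}$ and $\partial B_{\epsilon^L}'$; the step you are missing is the simultaneous-coordinate surgery of a single near-minimizer in place of a two-stage concatenation.
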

\begin{proof}
Firstly, we fix a curve $\tilde\gamma_0=(\gamma_0,\gamma_0'):[0,1]\to\Sigma_{reg}\times\Sigma_{reg}'$ such that
\begin{equation}
L_{\tilde d}(\tilde\gamma_0)=L_{\omega_{\tilde\Sigma}}(\tilde\gamma_0)\le \tilde d((s,s'),(r,r'))+\frac{\epsilon}{4}.
\end{equation}
We assume without loss of generality that $\gamma_0\cap B_{\epsilon^L}\neq\emptyset$ and $\gamma_0'\cap B_{\epsilon^L}'\neq\emptyset$. Set $u_1:=\sup\{u\in(0,1)|\gamma_0([0,u))\subset\Sigma_{\epsilon^L}\}$ and $u_2:=\inf\{u\in(0,1)|\gamma_0((u,1])\subset\Sigma_{\epsilon^L}\}$. Define $u_1',u_2'$ for $\gamma_0'$ similarly. Define $\tilde\gamma=(\gamma,\gamma')$, where $\gamma$ is obtained from $\gamma_0$ by changing $\gamma_0|_{[u_1,u_2]}$ to a smooth curve which lies on $\partial B_{\epsilon^L}$ and connects $\gamma_0(u_1)$ and $\gamma_0(u_2)$. $\gamma'$ is obtained from $\gamma_0'$ similarly. It suffices to show $\tilde\gamma$ satisfies \eqref{diamonbase3.1}. We separate our arguments into several cases as follows.
\par \emph{Case (1): $u_2\le u_1'$.} In this case,
\begin{align}
L_{\tilde d_\infty}(\tilde\gamma)-L_{\tilde d_\infty}(\tilde\gamma_0)&=L_{\omega_{\tilde\Sigma}}(\tilde\gamma)-L_{\omega_{\tilde\Sigma}}(\tilde\gamma_0)\nonumber\\
&=I+II\nonumber,
\end{align}
where
\begin{align}\label{I1}
I&=\int_{u_1}^{u_2}\sqrt{\omega_{\Sigma}(\dot{\gamma},\dot{\gamma})+\omega_{\Sigma'}(\dot{\gamma'},\dot{\gamma'})}du-\int_{u_1}^{u_2}\sqrt{\omega_{\Sigma}(\dot{\gamma}_0,\dot{\gamma}_0)+\omega_{\Sigma'}(\dot{\gamma'}_0,\dot{\gamma'}_0)}du\nonumber\\
&\le\int_{u_1}^{u_2}\sqrt{\omega_{\Sigma}(\dot{\gamma},\dot{\gamma})+\omega_{\Sigma'}(\dot{\gamma'},\dot{\gamma'})}du-\int_{u_1}^{u_2}\sqrt{\omega_{\Sigma}(\dot{\gamma'}_0,\dot{\gamma'}_0)}du\\
&\le\int_{u_1}^{u_2}\sqrt{\omega_{\Sigma}(\dot{\gamma},\dot{\gamma})}du+\int_{u_1}^{u_2}\sqrt{\omega_{\Sigma'}(\dot{\gamma'},\dot{\gamma'})}du-\int_{u_1}^{u_2}\sqrt{\omega_{\Sigma'}(\dot{\gamma'}_0,\dot{\gamma'}_0)}du\nonumber\\
&=\int_{u_1}^{u_2}\sqrt{\omega_{\Sigma}(\dot{\gamma},\dot{\gamma})}du\nonumber\\
&\le L_{d_\infty}(\partial B_{\epsilon^L})\le\frac{\epsilon}{4}\nonumber,
\end{align}
and
\begin{align}
II&=\int_{u_1'}^{u_2'}\sqrt{\omega_{\Sigma}(\dot{\gamma},\dot{\gamma})+\omega_{\Sigma'}(\dot{\gamma'},\dot{\gamma'})}du-\int_{u_1'}^{u_2'}\sqrt{\omega_{\Sigma}(\dot{\gamma}_0,\dot{\gamma}_0)+\omega_{\Sigma'}(\dot{\gamma'}_0,\dot{\gamma'}_0)}du\nonumber\\
&\le\int_{u_1'}^{u_2'}\sqrt{\omega_{\Sigma}(\dot{\gamma},\dot{\gamma})+\omega_{\Sigma'}(\dot{\gamma'},\dot{\gamma'})}du-\int_{u_1'}^{u_2'}\sqrt{\omega_{\Sigma}(\dot{\gamma}_0,\dot{\gamma}_0)}du\nonumber\\
&\le\int_{u_1'}^{u_2'}\sqrt{\omega_{\Sigma}(\dot{\gamma},\dot{\gamma})}du+\int_{t_1'}^{t_2'}\sqrt{\omega_{\Sigma'}(\dot{\gamma'},\dot{\gamma'})}du-\int_{u_1'}^{u_2'}\sqrt{\omega_{\Sigma}(\dot{\gamma}_0,\dot{\gamma}_0)}du\nonumber\\
&=\int_{u_1'}^{u_2'}\sqrt{\omega_{\Sigma'}(\dot{\gamma'},\dot{\gamma'})}du\nonumber\\
&\le L_{d_\infty'}(\partial B_{\epsilon^L}')\le\frac{\epsilon}{4}\nonumber.
\end{align}
Therefore,
\begin{align}
L_{\tilde d_\infty}(\tilde\gamma)&\le L_{\tilde d_\infty}(\tilde\gamma_0)+\frac{\epsilon}{2}\nonumber\\
&\le\tilde d_\infty((s,s'),(r,r'))+\frac{3\epsilon}{4}\nonumber.
\end{align}
\par \emph{Case (2): $u_1=u_1'$ and $u_2=u_2'$.} In this case,
\begin{align}
&L_{\tilde d_\infty}(\tilde\gamma)-L_{\tilde d_\infty}(\tilde\gamma_0)\nonumber\\
&=L_{\omega_{\tilde\Sigma}}(\tilde\gamma)-L_{\omega_{\tilde\Sigma}}(\tilde\gamma_0)\nonumber\\
&=\int_{u_1}^{u_2}\sqrt{\omega_{\Sigma}(\dot{\gamma},\dot{\gamma})+\omega_{\Sigma'}(\dot{\gamma'},\dot{\gamma'})}du-\int_{u_1}^{u_1'}\sqrt{\omega_{\Sigma}(\dot{\gamma}_0,\dot{\gamma}_0)+\omega_{\Sigma'}(\dot{\gamma'}_0,\dot{\gamma'}_0)}du\nonumber\\
&\le\int_{u_1}^{u_2}\sqrt{\omega_{\Sigma}(\dot{\gamma},\dot{\gamma})+\omega_{\Sigma'}(\dot{\gamma'},\dot{\gamma'})}du\nonumber\\
&\le\int_{u_1}^{u_2}\sqrt{\omega_{\Sigma}(\dot{\gamma},\dot{\gamma})}du+\int_{u_1}^{u_2}\sqrt{\omega_{\Sigma'}(\dot{\gamma'},\dot{\gamma'})}du\nonumber\\
&\le L_{d_\infty}(\partial B_{\epsilon^L})+L_{d_\infty'}(\partial B_{\epsilon^L}')\nonumber\\
&\le\frac{\epsilon}{4}\nonumber,
\end{align}
\par \emph{Case(3): $u_1\le u_1'\le u_2\le u_2'$ or $u_1\le u_1'\le u_2'\le u_2$.} These cases can be checked by a combination of arguments in the above two  cases.
\par \emph{Case (4)}. The remaining cases can be checked by interchanging the role of $(u_1,u_2)$ and $(u_1',u_2')$.
\par Lemma \ref{lemonbase} is proved.
\end{proof}

Now we check an analog of \cite[Lemma 3.6]{ZyZz1}.
\begin{lem}\label{lemonbase1}
For any $\epsilon>0$, there exists a $T_\epsilon>0$ such that for any $(x,x'),(y,y')\in X_{\epsilon^L}\times X_{\epsilon^L}'$ and $t\in[T_\epsilon,\infty)$, we can choose a piecewise smooth curve $\tilde\tau\subset X_{\epsilon^L}\times X_{\epsilon^L}'$ connecting $(x,x')$ and $(y,y')$ with
\begin{equation}
L_{\omega(t)}(\tilde\tau)\le\tilde d_\infty((f(x),f'(x)),(f(y),f'(y')))+2\epsilon.
\end{equation}
\end{lem}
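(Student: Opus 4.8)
The plan is to reduce the statement on $\tilde X = X \times X'$ to the corresponding statements on each factor, which are exactly Lemma \ref{geod2} (applied to $f:X\to\Sigma$ and to $f':X'\to\Sigma'$ separately), and then to glue the two factor-curves into a single curve on the product while controlling the length with the product structure of $\omega(t)$. The first step is to recall from Setup \ref{setup3} that the K\"ahler-Ricci flow $\omega(t)$ on $\tilde X$ behaves asymptotically like a product: since $\tilde f^*\tilde\chi = f^*\chi + f'^*\chi'$ and $\Omega\wedge\Omega'$ is the natural volume form, Proposition \ref{observation2} together with \eqref{GKE2} shows that on $X_{\epsilon^L}\times X'_{\epsilon^L}$ one has, for $t$ large, $\omega(t) \le \pi_X^*\omega(t)_X + \pi_{X'}^*\omega(t)_{X'} + \epsilon\,\omega_0$ up to a small error, where $\omega(t)_X$, $\omega(t)_{X'}$ denote the flows on the two factors (or, more precisely, that $\omega(t)$ is uniformly equivalent on this compact set to $f^*\omega_\Sigma + f'^*\omega_{\Sigma'}$ after rescaling, via Proposition \ref{facts}(3) applied on $\tilde X$). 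This is where the restriction to a product of Calabi-Yau fibrations is used: the fiberwise Ricci-flat metrics split, so $\omega_{SRF}$ on $\tilde X$ is (asymptotically) the sum of the semi-flat forms on the two factors.

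Next I would apply Lemma \ref{geod2} to $f:X\to\Sigma$: given $x,y\in X_{\epsilon^L}$ there is $T_\epsilon$ and a piecewise smooth curve $\tau\subset X_{\epsilon^L}$ from $x$ to $y$ with $L_{\omega(t)_X}(\tau)\le d_{\omega_\Sigma}(f(x),f(y))+3\epsilon$ for $t\ge T_\epsilon$, and likewise a curve $\tau'\subset X'_{\epsilon^L}$ from $x'$ to $y'$ with $L_{\omega(t)_{X'}}(\tau')\le d_{\omega_{\Sigma'}}(f'(x'),f'(y'))+3\epsilon$. Concatenate: let $\tilde\tau$ run first along $\tau\times\{x'\}$ and then along $\{y\}\times\tau'$ inside $X_{\epsilon^L}\times X'_{\epsilon^L}$. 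Using the product (up to $\epsilon$) structure of $\omega(t)$ from the first step, the length of a curve lying in a single factor slice is controlled by the length measured in that factor's flow, so $L_{\omega(t)}(\tilde\tau)\le L_{\omega(t)_X}(\tau)+L_{\omega(t)_{X'}}(\tau')+C\epsilon$. Finally, by Lemma \ref{lemonbase} (and the definition of $\tilde d_\infty$ in subsection \ref{subs3.1'}, in particular that $\tilde d_\infty$ restricted to $\Sigma_{\epsilon^L}\times\Sigma'_{\epsilon^L}$ is $d_{\omega_{\tilde\Sigma}} = d_{\omega_\Sigma+\omega_{\Sigma'}}$), one has $d_{\omega_\Sigma}(f(x),f(y)) + d_{\omega_{\Sigma'}}(f'(x'),f'(y')) \le \sqrt{2}\,\tilde d_\infty(\cdot,\cdot)$ or, more carefully, a concatenated base curve shows the sum of base distances differs from $\tilde d_\infty((f(x),f'(x')),(f(y),f'(y')))$ by at most $\epsilon$ after the curve-straightening of Lemma \ref{lemonbase}; combining the three estimates and absorbing constants into $\epsilon$ (by shrinking $\epsilon$ and enlarging $L,T_\epsilon$ as permitted) gives $L_{\omega(t)}(\tilde\tau)\le \tilde d_\infty((f(x),f'(x')),(f(y),f'(y')))+2\epsilon$.

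The main obstacle is the first step: justifying that $\omega(t)$ on the product genuinely splits up to negligible error on the compact region $X_{\epsilon^L}\times X'_{\epsilon^L}$. One has to check that the parabolic complex Monge-Amp\`ere solution $\varphi(t)$ on $\tilde X$ converges (locally $C^{1,\alpha}$ away from singular fibers, by Proposition \ref{facts}(4)) to $\tilde f^*\tilde\psi = f^*\psi + f'^*\psi'$, so that $\omega(t) \to f^*\omega_\Sigma + f'^*\omega_{\Sigma'}$ in $C^0$ on compact subsets of $\tilde X_{reg}$; then the off-diagonal mixed terms of $\omega(t)$ (those pairing $dz$ from the $X$-directions with $d\bar w$ from the $X'$-directions) must be shown to be $o(1)$ relative to the block-diagonal part, which follows because the limit $f^*\omega_\Sigma + f'^*\omega_{\Sigma'}$ has no such mixed terms and convergence is in $C^0$. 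Once this is in place, the gluing and length bookkeeping are routine, essentially identical to the proof of \cite[Lemma 3.6]{ZyZz1} carried out one factor at a time.
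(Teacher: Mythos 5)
Your plan of concatenating a curve in the $X$-direction with a curve in the $X'$-direction does not give the bound required. If you go along $\tau\times\{x'\}$ and then along $\{y\}\times\tau'$, then (after using the $C^0$-convergence $\omega(t)\to f^*\omega_\Sigma+f'^*\omega_{\Sigma'}$ on the compact set $X_{\epsilon^L}\times X'_{\epsilon^L}$) the limiting length is
$d_{\omega_\Sigma}(f(x),f(y))+d_{\omega_{\Sigma'}}(f'(x'),f'(y'))$,
i.e.\ the $\ell^1$ sum of the factor distances. But $\tilde d_\infty$ restricted to $\tilde\Sigma_{reg}$ is the Riemannian product distance of $\omega_\Sigma+\omega_{\Sigma'}$, which is the $\ell^2$ combination and can be smaller by a factor of $\sqrt 2$. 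You explicitly acknowledge this inequality in your last paragraph, but then assert the discrepancy can be ``absorbed into $\epsilon$'' --- that cannot work, since $\sqrt 2$ is a \emph{multiplicative} loss and the base distances in question are bounded away from zero (of order the diameter of $\tilde\Sigma$). So the concatenated curve only yields $L_{\omega(t)}(\tilde\tau)\le\sqrt{2}\,\tilde d_\infty+O(\epsilon)$, which is strictly weaker than the claimed $\tilde d_\infty+2\epsilon$.

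There is a second, smaller issue: you invoke ``flows $\omega(t)_X$, $\omega(t)_{X'}$ on the two factors,'' but Setup \ref{setup3} allows the initial metric $\omega_0$ on $\tilde X$ to be arbitrary, not a product metric, so there are no such factor flows, and Lemma \ref{geod2} cannot be applied verbatim to each factor. This part can be repaired by working directly with the $C^0$-convergence of $\omega(t)$ to $\tilde f^*\omega_{\tilde\Sigma}$ on compacts (as you hint), but the $\sqrt2$ gap above remains. The paper's proof avoids both problems by a different decomposition: it invokes Lemma \ref{lemonbase} to produce a \emph{single} almost-geodesic $\tilde\gamma\subset\Sigma_{\epsilon^L}\times\Sigma'_{\epsilon^L}$ of length $\le\tilde d_\infty+\epsilon$ for the genuine product metric on the base, and then lifts $\tilde\gamma$ by $\tilde f$ to $X_{\epsilon^L}\times X'_{\epsilon^L}$ as in \cite[Lemma 3.6]{ZyZz1}, using Proposition \ref{facts}(3)--(4) to compare $L_{\omega(t)}$ with $L_{\tilde f^*\omega_{\tilde\Sigma}}$; this preserves the $\ell^2$ product geometry and gives the sharp $+2\epsilon$ bound. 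You should replace your factor-by-factor concatenation with this lift of a base almost-geodesic.
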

\begin{proof}
By Lemma \ref{lemonbase}, we can choose a piecewise smooth curve $\tilde\gamma\subset\Sigma_{\epsilon^L}\times\Sigma_{\epsilon^L}'$ connecting $(f(x),f'(x'))$ and $(f(y),f'(y'))$ with
\begin{equation}\label{diamonbase4}
L_{\tilde d_\infty}(\tilde\gamma)=L_{\omega_{\tilde\Sigma}}(\tilde\gamma)\le\tilde d_\infty(f(x),f'(x'),(f(y),f'(y')))+\epsilon.
\end{equation}
Then one easily lifts $\tilde\gamma$ by $\tilde f$ to a curve $\tilde\tau\subset X_{\epsilon^L}\times X_{\epsilon^L}'$ by the manner in \cite[Lemma 3.6]{ZyZz1}, which, by combining items (4) and (5) in Proposition \ref{facts} and choosing sufficiently large $T_\epsilon$, satisfies
\begin{align}
L_{\omega(t)}(\tilde\tau)&\le L_{\tilde f^*\omega_{\tilde\Sigma}}(\tilde\tau)+\epsilon\nonumber\\
&=L_{\omega_{\tilde\Sigma}}(\tilde\gamma)+\epsilon\nonumber\\
&\le\tilde d_\infty(f(x),f'(x'),(f(y),f'(y')))+2\epsilon\nonumber.
\end{align}
Lemma \ref{lemonbase1} is proved.
\end{proof}
Now, by combining Lemma \ref{lemonbase1}, item (1) of Proposition \ref{facts} and the assumption of Ricci curvature lower bound, we can repeat arguments in \cite[Lemmas 3.7, 3.8]{ZyZz1} to conclude
\begin{lem}\label{lemondiam}
Assume the Ricci curvature of $\omega(t)$ is uniformly bounded from below.
\begin{itemize}
\item[(1)] There exist two positive constant $C$ and $T$ such that for all $t\in[T,\infty)$,
$$diam_{\omega(t)}(\tilde X)\le C.$$
\item[(2)] For any $\epsilon\le\frac{1}{2}$, set $R_t$ be the maximal distance with respect to $(\tilde X,\omega(t))$ from points in $\tilde X\setminus(X_{\epsilon^L}\times X_{\epsilon^L}')$ to $X_{\epsilon^L}\times X_{\epsilon^L}'$. Then up to possibly increasing $L$, there exists some $T_\epsilon>0$ such that for all $t\in[T_\epsilon,\infty)$ we have
   $$R_t\le\epsilon.$$
\end{itemize}
In particular, for all $t\in[T_\epsilon,\infty)$,
\begin{equation}\label{GH2}
d_{GH}((\tilde X,d_{\omega(t)}),(X_{\epsilon^L}\times X_{\epsilon^L}',d_{\omega(t)}))\le\epsilon.
\end{equation}
\end{lem}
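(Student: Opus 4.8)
The plan is to prove Lemma~\ref{lemondiam} exactly as the product analogue of Lemmas~\ref{geod4} and~\ref{geod5}, i.e.\ by the volume-comparison argument of \cite[Lemmas 3.7, 3.8]{ZyZz1}; the only feature special to the product setting---control of $\omega(t)$ over a neighbourhood of the singular fibres of $\tilde f$---has already been isolated in Lemma~\ref{lemonbase1}. Set $2N:=\dim_{\mathbb R}\tilde X=2(n+m)$ and, rescaling the assumed uniform lower bound, suppose $Ric(\omega(t))\ge-(2N-1)\omega(t)$ for all $t$. For a parameter $L$ write $G_L:=X_{\epsilon^L}\times X_{\epsilon^L}'=\tilde f^{-1}(\Sigma_{\epsilon^L}\times\Sigma_{\epsilon^L}')$, a compact subset of $\tilde f^{-1}(\tilde\Sigma_{reg})$, and $B_L:=\tilde X\setminus G_L=\tilde f^{-1}\big((B_{\epsilon^L}\times\Sigma')\cup(\Sigma\times B_{\epsilon^L}')\big)$; let $\Omega$ be a fixed smooth positive volume form on $\tilde X$.

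First I would collect the volume inputs. By Lemma~\ref{lemonbase1}, for $t\ge T_\epsilon$ one has $diam_{\omega(t)}(G_L)\le D_1$ with $D_1\le diam_{\tilde d_\infty}(\tilde\Sigma)+1$, a bound independent of $\epsilon$ and $L$. By Proposition~\ref{facts}(1) (applied to the flow on $\tilde X$, an $(n+m)$-dimensional Calabi--Yau fibration over the $2$-dimensional base $\tilde\Sigma$), there is $C\ge1$ with $C^{-1}e^{-(n+m-2)t}\Omega\le\omega(t)^{n+m}\le Ce^{-(n+m-2)t}\Omega$, so $Vol_{\omega(t)}(\tilde X)\ge c_0e^{-(n+m-2)t}$ for a fixed $c_0>0$. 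Moreover $(B_{\epsilon^L}\times\Sigma')\cup(\Sigma\times B_{\epsilon^L}')$ decreases, as $L\to\infty$, to the Lebesgue-null set $(\{s_1\}\times\Sigma')\cup(\Sigma\times\{s_1'\})$, whose $\tilde f$-preimage $\tilde f^{-1}(\tilde S)$ is a proper subvariety and hence $\Omega$-null; thus, increasing $L$, we may arrange $Vol_{\omega(t)}(B_L)\le\epsilon^{2N+1}e^{-(n+m-2)t}$, whence $Vol_{\omega(t)}(G_L)\ge\tfrac12c_0e^{-(n+m-2)t}$ once $\epsilon$ is small. Picking $x_t\in B_L$ realising $R_t$, one has $B_{\omega(t)}(x_t,R_t)\subset B_L$ and $G_L\subset B_{\omega(t)}(x_t,R_t+D_1)$, so that
\[
\frac{Vol_{\omega(t)}\big(B_{\omega(t)}(x_t,R_t+D_1)\big)}{Vol_{\omega(t)}\big(B_{\omega(t)}(x_t,R_t)\big)}\ \ge\ \frac{Vol_{\omega(t)}(G_L)}{Vol_{\omega(t)}(B_L)}\ \ge\ \tfrac{c_0}{2}\,\epsilon^{-(2N+1)}.
\]

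Then I would combine this with the Bishop--Gromov relative volume comparison, which under $Ric(\omega(t))\ge-(2N-1)\omega(t)$ bounds the left-hand side above by $\big(\int_0^{R_t+D_1}\sinh^{2N-1}v\,dv\big)\big/\big(\int_0^{R_t}\sinh^{2N-1}v\,dv\big)$. For part~(1), fix one small $\epsilon_0$ together with the corresponding $L$ and put $T:=T_{\epsilon_0}$: if $R_{t_j}\to\infty$ along some subsequence of times $\ge T$, the comparison ratio stays $\le 2e^{(2N-1)D_1}$, which contradicts the lower bound $\tfrac{c_0}{2}\epsilon_0^{-(2N+1)}$ once $\epsilon_0$ is chosen small; hence $R_t\le D_2$ and $diam_{\omega(t)}(\tilde X)\le D_1+2R_t\le D_1+2D_2=:C$ for all $t\ge T$, which is~(1) (note $C$ is intrinsic to $(\tilde X,\omega(t))$, so it is unaffected by later choices of $\epsilon,L$). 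For part~(2), given a small $\epsilon$, possibly increase $L$ so that $Vol_{\omega(t)}(B_L)\le\epsilon^{2N+1}e^{-(n+m-2)t}$, and use $R_t\le C$ to bound the numerator above by a fixed constant $C_2=C_2(D_1,C,N)$; then the two displays give $\int_0^{R_t}\sinh^{2N-1}v\,dv\le\tfrac{2}{c_0}C_2\,\epsilon^{2N+1}$, and $\sinh v\ge v$ forces $R_t^{2N}\le\tfrac{4N}{c_0}C_2\,\epsilon^{2N+1}$, i.e.\ $R_t\le C_3\,\epsilon^{1+\frac{1}{2N}}\le\epsilon$ for $\epsilon$ small. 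Finally $R_t\le\epsilon$ says $G_L$ is an $\epsilon$-net in $(\tilde X,d_{\omega(t)})$, so \eqref{GH2} follows from the definition of the Gromov--Hausdorff distance.

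Since the whole argument is a transcription of \cite{ZyZz1}, I do not expect a conceptual obstacle. The one point demanding care is the volume-collapsing estimate $Vol_\Omega(B_L)\to0$ as $L\to\infty$, where the product (``cross-shaped'') structure of the discriminant $\tilde S=(S\times\Sigma')\cup(\Sigma\times S')$ enters; but it reduces to the facts that $(B_{\epsilon^L}\times\Sigma')\cup(\Sigma\times B_{\epsilon^L}')$ shrinks to a Lebesgue-null subset of $\tilde\Sigma$ and that $\tilde f^{-1}(\tilde S)$ is $\Omega$-null, together with Proposition~\ref{facts}(1). The remaining routine point is the normalisation in Bishop--Gromov (real dimension $2(n+m)$, Ricci rescaled to $-(2N-1)$). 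The Ricci lower bound is used solely here, to feed Bishop--Gromov; for the continuity method~\eqref{LT0} it holds automatically, which is why Theorem~\ref{result.1} needs no such hypothesis.
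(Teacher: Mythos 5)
Your proposal is correct and follows exactly the route indicated in the paper, which is a one-line deferral to the arguments of \cite[Lemmas 3.7, 3.8]{ZyZz1} (i.e.\ Lemmas~\ref{geod4} and~\ref{geod5} as sketched in Section~\ref{proof.1}), fed with Lemma~\ref{lemonbase1} and Proposition~\ref{facts}(1). Your fleshing-out --- the uniform diameter bound $D_1$ for $G_L$ from Lemma~\ref{lemonbase1}, the volume collapse of $B_L$ via the $\Omega$-nullity of $\tilde f^{-1}(\tilde S)$, and the two-stage Bishop--Gromov argument with exponent adjusted to real dimension $2(n+m)$ --- is precisely what the paper leaves implicit.
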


\subsection{Gromov-Hausdorff convergence}
Since $(\tilde X,\omega(t))$ have uniform Ricci curvature lower bound and diameter upper bound, we now make use of Proposition \ref{facts}, Lemma \ref{L1conv} and some results and arguments in \cite{CC1,CC2,GTZ13,GTZ,ToZy17} to prove Gromov-Hausdorff convergence.
\par By Gromov's precompactness theorem, for any given time sequence we can choose a subsequence $t_k\to\infty$ and a compact length metric space $(Z,d_Z)$ with $(\tilde X,\omega(t_k))\to(Z,d_Z)$ in Gromov-Hausdorff topology, as $t_k\to\infty$. As we have the estimates in Proposition \ref{facts}, the following lemma can be checked by the same arguments in \cite[Lemma 5.1]{GTZ13}.
\begin{lem}
There exist an open subset $Z_0$ of $Z$ and a local isometric homeomorphism $h:(\tilde\Sigma_{reg},\tilde d)\to (Z_0,d_Z)$.
\end{lem}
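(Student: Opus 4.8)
The plan is to follow the strategy of \cite[Lemma 5.1]{GTZ13}, adapting it to the present collapsing setting with a one-dimensional (complex) base $\tilde\Sigma$. The goal is to identify, inside the Gromov-Hausdorff limit $(Z,d_Z)$, an open piece $Z_0$ that is locally isometric to the smooth part $(\tilde\Sigma_{reg},\tilde d)=(\tilde\Sigma_{reg},\omega_{\tilde\Sigma})$. The key inputs are: (i) the $C^0_{loc}$-convergence $\omega(t)\to\tilde f^*\omega_{\tilde\Sigma}$ away from the singular fibers (Proposition \ref{facts}(3),(4), together with Proposition \ref{observation2}); (ii) the uniform collapsing $\omega(t)^n\le Ce^{-(n-k)t}\Omega$ (Proposition \ref{facts}(1)) which forces the fibers to shrink; and (iii) Lemma \ref{L1conv}, which controls the densities so that the pushforward structure passes to the limit.

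First I would fix a relatively compact open set $U\subset\subset\tilde\Sigma_{reg}$ and work over $\tilde f^{-1}(U)\subset\subset\tilde X_{reg}$. By Proposition \ref{facts}(3), on $\tilde f^{-1}(U)$ we have $C^{-1}\omega_t\le\omega(t)\le C\omega_t$ for $t$ large, and the $C^{1,\alpha}_{loc}$-convergence of potentials gives that, after rescaling in the fiber directions is \emph{not} needed here since we only want the base metric, the distance functions $d_{\omega(t)}$ restricted to a local section of $\tilde f$ over $U$ converge to $d_{\omega_{\tilde\Sigma}}$. More precisely, for two points $s,r\in U$ close enough and a fixed smooth local lift, any curve realizing (almost) $d_{\omega(t)}$ between the lifts can, using the collapsing of fibers and the lower metric bound $\omega(t)\ge C^{-1}\tilde f^*\tilde\chi$, be pushed down to a curve in $U$ whose $\omega_{\tilde\Sigma}$-length is controlled; conversely any curve downstairs lifts with controlled $\omega(t)$-length by Proposition \ref{facts}(4). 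Passing to the Gromov-Hausdorff limit along $t_k$, the images of these local lifts assemble (via a standard diagonal/partition argument over a countable exhaustion of $\tilde\Sigma_{reg}$ by such $U$'s) into an open set $Z_0\subset Z$ and a map $h:(\tilde\Sigma_{reg},\tilde d)\to(Z_0,d_Z)$ that is a local isometry; since local isometries of length spaces are open and, here, injective on small balls, $h$ is a local isometric homeomorphism onto its image, which we take to be $Z_0$.

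The main obstacle, as usual in these arguments, is showing that distinct points of $\tilde\Sigma_{reg}$ cannot be identified in the limit and that $h$ is genuinely a homeomorphism onto an \emph{open} subset — i.e. controlling the distance \emph{from below} and checking that no ``extra'' collapsing in the base directions occurs. This is exactly where Lemma \ref{L1conv} (the $L^1$-convergence $e^{\dot\varphi+\varphi}\to e^{\tilde f^*\tilde\psi}$) and the volume non-degeneracy $\omega(t)^n\ge C^{-1}e^{-(n-k)t}\Omega$ enter: together with Bishop-Gromov (available since Ricci is bounded below, Lemma \ref{lemondiam}) they guarantee that the renormalized measures $e^{(n-k)t}\omega(t)^n$ have uniformly positive mass on $\tilde f$-preimages of fixed balls in $U$, which prevents base-direction collapse and pins down the lower distance bound. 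I would also need the standard fact that on a length space a map that is a local isometry onto a neighborhood of each image point has open image, so that $Z_0$ is open in $Z$.

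Once this is in place, the lemma follows. I should remark that the proof is essentially verbatim that of \cite[Lemma 5.1]{GTZ13}; the only new feature is that the base is the possibly-singular current metric $\omega_{\tilde\Sigma}$ on $\tilde\Sigma_{reg}$ rather than a smooth one, but since we restrict to $\tilde\Sigma_{reg}$ where $\omega_{\tilde\Sigma}$ is a genuine smooth Kähler metric (and its product structure \eqref{GKE2} plays no role in this particular step), no modification is needed. The harder global identification of $Z$ with $(\tilde X_\infty,\tilde d_\infty)$ — extending $h$ across $\tilde S$ and using the diameter bounds \eqref{GH1}, \eqref{GH2} together with Cheeger-Colding theory — is deferred to the subsequent steps of Section \ref{proof}.
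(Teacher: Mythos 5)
Your proposal takes essentially the same route as the paper: the paper itself simply asserts that, given the estimates of Proposition~\ref{facts}, the lemma follows ``by the same arguments in \cite[Lemma 5.1]{GTZ13}'', and your sketch fills in precisely that argument, with the right ingredients (fiber collapse from Proposition~\ref{facts}(1), $C^0_{loc}$ metric convergence from Proposition~\ref{facts}(3)--(4) and Proposition~\ref{observation2}, and the lower bound $\omega(t)\ge C^{-1}\tilde f^*\tilde\chi$). One small over-reach: the lower distance bound preventing base-direction collapse comes directly from $\omega(t)\ge C^{-1}\tilde f^*\tilde\chi$ (Proposition~\ref{facts}(2)), which bounds the length of the projection of any curve; invoking Lemma~\ref{L1conv} and Bishop--Gromov here is unnecessary — those enter later, in the measure-theoretic Lemma~\ref{meas}, not in establishing the local isometry.
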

Now we recall the construction in \cite[Section 1]{CC1} of a Radon measure $\nu$ on $Z$, i.e., the renormalized limiting measure, as follows. Fix a $ \underline{z}\in Z_0$ and $\underline{\tilde p_k}\in\tilde X$ with $\underline{\tilde p_k}\to\underline{z}$ under Gromov-Hausdorff convergence of $(\tilde X,\omega(t_k))\to(Z,d_Z)$. For any given $\tilde x\in\tilde X$ and $r>0$, let
$$\underline{V}_k(\tilde x,r)=\frac{Vol_{\omega(t_k)}(B_{\omega(t_k)}(\tilde x,r))}{Vol_{\omega(t_k)}(B_{\omega(t_k)}(\underline{\tilde p_k},1))}.$$
Then \cite[Theorem 1.6]{CC1} gives a continuous function
$$\underline{V}_{\infty}:Z\times[0,\infty)\to[0,\infty)$$
with, for any given $\tilde x_k\to z$ under Gromov-Hausdorff convergence $(\tilde X,\omega(t_k))\to(Z,d_Z)$ and $r>0$, $\underline{V}_{k}(\tilde x_k,r)\to\underline{V}_\infty(z,r)$. Moreover, by \cite[Theorem 1.10]{CC1}, there exists a unique Radon measure $\nu$ on $Z$ such that for all $z\in Z$ and $r>0$, we have
$$\nu(B_{d_Z}(z,r)=\underline{V}_\infty(z,r).$$
By definition of $\underline{V}_k$ and volume comparison theorems, we also have, for any $r\le R$,
\begin{equation}
\frac{\nu(B_{d_Z}(z,r))}{\nu(B_{d_Z}(z,R))}\ge\frac{V_{2n+2m,-1}(r)}{V_{2n+2m,-1}(R)},
\end{equation}
where $V_{n,-1}(r)$ denotes the volume of ball with radii $r$ in the simply connected space form of dimension $n$ and curvature $-1$. For any compact subset $K\subset Z$,
$$\nu(K)=\lim_{\delta\to0}\nu_{\delta}(K)=\lim_{\delta\to0}\inf\left\{\sum_{i}\underline{V}_\infty(z_i,r_i)|K\subset\cup_{i}B_{d_Z}(z_i,r_i),r_i<\delta\right\}.$$
In the following, up to some scaling on $\omega(t_k)$ and $\omega_{\tilde\Sigma}$, we may assume without loss of generality that $B_{\omega_{\tilde\Sigma}}(h^{-1}(\underline{z}),2)\subset\tilde\Sigma_{reg}$ and is geodesically convex, i.e., any two $z_1,z_2\in B_{\omega_{\tilde\Sigma}}(h^{-1}(\underline{z}),2)$ can be connected by a minimal geodesic contained in $B_{\omega_{\tilde\Sigma}}(h^{-1}(\underline{z}),2)$.
The following is an analog of \cite[Lemma 5.2]{GTZ13}.
\begin{lem}\label{meas}
There exist a positive constant $C_1$ such that
$$\underline{V}_{\infty}(z,r)=C_1\int_{\tilde f^{-1}(B_{\omega_{\tilde\Sigma}}(h^{-1}(z),r))}e^{\tilde\psi}\tilde\Omega,$$
whenever $z\in Z_0$ and $r\le1$ with $B_{\omega_{\tilde\Sigma}}(h^{-1}(z),2r)\subset \tilde\Sigma_{reg}$  and is geodesically convex. Recall $\tilde\psi$ is the solution to \eqref{GKE1}.
\end{lem}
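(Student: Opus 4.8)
The plan is to read off $\underline V_\infty(z,\cdot)$ by comparing, for a sequence $\tilde x_k\in\tilde X$ with $\tilde x_k\to z$ under $(\tilde X,\omega(t_k))\to(Z,d_Z)$, the metric ball $B_{\omega(t_k)}(\tilde x_k,r)$ with the $\tilde f$-preimage of the geodesic ball $B_{\omega_{\tilde\Sigma}}(h^{-1}(z),r)$ in the base, and then to evaluate the volume of this preimage through the parabolic Monge--Amp\`ere equation \eqref{conti.eq}. On $\tilde X$ the background volume form is $\tilde\Omega=\Omega\wedge\Omega'$ (so that $\sqrt{-1}\partial\bar\partial\log\tilde\Omega=\tilde f^*\tilde\chi$), and \eqref{conti.eq} reads $\omega(t)^{n+m}=e^{-(n+m-2)t}e^{\dot\varphi(t)+\varphi(t)}\tilde\Omega$; since Setup \ref{setup3} is a special case of Setup \ref{setup2}, Proposition \ref{observation2} gives $\omega(t)\to\tilde f^*\omega_{\tilde\Sigma}$ in $C^0(K',\omega_0)$ for every $K'\subset\subset\tilde X_{reg}$, and Lemma \ref{L1conv} gives $e^{\dot\varphi(t)+\varphi(t)}\to e^{\tilde f^*\tilde\psi}$ in $L^1(\tilde X,\tilde\Omega)$.

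\emph{Step 1 (geometric squeezing).} I would first show that there is a sequence $\epsilon_k\downarrow 0$, depending on $z$ and $r$, with
\[
\tilde f^{-1}\bigl(B_{\omega_{\tilde\Sigma}}(h^{-1}(z),r-\epsilon_k)\bigr)\ \subseteq\ B_{\omega(t_k)}(\tilde x_k,r)\ \subseteq\ \tilde f^{-1}\bigl(B_{\omega_{\tilde\Sigma}}(h^{-1}(z),r+\epsilon_k)\bigr),
\]
and likewise with $(z,r,\tilde x_k)$ replaced by $(\underline{z},1,\underline{\tilde p_k})$, the latter being legitimate by the standing assumption that $B_{\omega_{\tilde\Sigma}}(h^{-1}(\underline{z}),2)\subset\tilde\Sigma_{reg}$ is geodesically convex. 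For the right inclusion: set $K':=\tilde f^{-1}\bigl(\bar B_{\omega_{\tilde\Sigma}}(h^{-1}(z),\tfrac{3r}{2})\bigr)$, which is relatively compact in $\tilde X_{reg}$ because $B_{\omega_{\tilde\Sigma}}(h^{-1}(z),2r)\subset\tilde\Sigma_{reg}$; since $\tilde f(\tilde x_k)\to h^{-1}(z)$, the collapsing of the fibers (Proposition \ref{facts}(1),(3)) together with the $C^0$-convergence on $K'$ (Proposition \ref{observation2}) forces a minimizing $\omega(t_k)$-curve of length $\le r$ issuing from $\tilde x_k$ to remain in $K'$ and to project under $\tilde f$ to a curve of $\omega_{\tilde\Sigma}$-length $\le r+\epsilon_k$ (otherwise its projection would reach $\omega_{\tilde\Sigma}$-distance $\tfrac{3r}{2}-o(1)>r$ from $h^{-1}(z)$). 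For the left inclusion: given $\tilde y$ over $B_{\omega_{\tilde\Sigma}}(h^{-1}(z),r-\epsilon_k)$, use geodesic convexity to join $\tilde f(\tilde y)$ to $h^{-1}(z)$ by a minimizing geodesic in $\tilde\Sigma_{reg}$, lift it horizontally inside $K'$ (with $\omega(t_k)$-length $\le r-\epsilon_k+o(1)$ by Proposition \ref{observation2}), and then travel within the collapsing fiber, of $\omega(t_k)$-diameter $o(1)$, to reach $\tilde x_k$; the total length is $\le r$ for $k$ large. The bookkeeping here follows the proof of \cite[Lemma 5.2]{GTZ13}.

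\emph{Step 2 (volume evaluation).} For each fixed $s$, $\tilde f^{-1}\bigl(\partial B_{\omega_{\tilde\Sigma}}(h^{-1}(z),s)\bigr)$ has measure zero in $\tilde X$, so $s\mapsto\int_{\tilde f^{-1}(B_{\omega_{\tilde\Sigma}}(h^{-1}(z),s))}e^{\tilde f^*\tilde\psi}\tilde\Omega$ is continuous; combining this with the $L^1$-convergence $e^{\dot\varphi(t_k)+\varphi(t_k)}\to e^{\tilde f^*\tilde\psi}$ and Step 1,
\[
Vol_{\omega(t_k)}\bigl(B_{\omega(t_k)}(\tilde x_k,r)\bigr)=\int_{B_{\omega(t_k)}(\tilde x_k,r)}\omega(t_k)^{n+m}=e^{-(n+m-2)t_k}\Bigl(\int_{\tilde f^{-1}(B_{\omega_{\tilde\Sigma}}(h^{-1}(z),r))}e^{\tilde\psi}\tilde\Omega+o(1)\Bigr),
\]
and the same identity holds at $(\underline{z},1,\underline{\tilde p_k})$. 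Forming the quotient that defines $\underline V_k(\tilde x_k,r)$, the factor $e^{-(n+m-2)t_k}$ cancels, and passing to the limit via \cite[Theorem 1.6]{CC1} yields $\underline V_\infty(z,r)=C_1\int_{\tilde f^{-1}(B_{\omega_{\tilde\Sigma}}(h^{-1}(z),r))}e^{\tilde\psi}\tilde\Omega$ with $C_1:=\bigl(\int_{\tilde f^{-1}(B_{\omega_{\tilde\Sigma}}(h^{-1}(\underline{z}),1))}e^{\tilde\psi}\tilde\Omega\bigr)^{-1}$, which is finite and positive because $\tilde f^{-1}(B_{\omega_{\tilde\Sigma}}(h^{-1}(\underline{z}),1))\subset\subset\tilde X_{reg}$ and $e^{\tilde f^*\tilde\psi}\tilde\Omega$ is a positive smooth volume form there; note $C_1$ depends only on the fixed basepoint $\underline{z}$, not on $z$ or $r$, as required.

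The step I expect to be the main obstacle is the geometric squeezing of Step 1, i.e.\ trapping $B_{\omega(t_k)}(\tilde x_k,r)$ between $\tilde f$-preimages of concentric base balls whose radii differ by $o(1)$. This is exactly where the uniform collapsing of the fibers, the $C^0$-convergence of $\omega(t_k)$ towards $\tilde f^*\omega_{\tilde\Sigma}$ on compact subsets of $\tilde X_{reg}$, and the geodesic convexity hypothesis on $B_{\omega_{\tilde\Sigma}}(h^{-1}(z),2r)$ must all be used together, precisely as in \cite[Lemma 5.2]{GTZ13}. Once it is established, Step 2 is a routine consequence of the $L^1$-convergence $e^{\dot\varphi+\varphi}\to e^{\tilde f^*\tilde\psi}$ and of the normalization built into $\underline V_k$.
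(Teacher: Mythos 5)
Your proposal follows essentially the same route as the paper's proof: the two-sided inclusion trapping $B_{\omega(t_k)}(\tilde x_k,r)$ between $\tilde f$-preimages of base balls of radius $r\mp\epsilon(t_k)$ (credited to \cite[Lemma 5.2]{GTZ13}, using Propositions \ref{facts} and \ref{observation2}), followed by the volume computation via the parabolic Monge--Amp\`ere equation \eqref{conti.eq}, the $L^1$-convergence of $e^{\dot\varphi+\varphi}$ from Lemma \ref{L1conv}, cancellation of the factor $e^{-(n+m-2)t_k}$ in the quotient, and the limit via \cite[Theorem 1.6]{CC1}. You spell out the squeezing argument in somewhat more detail than the paper (which defers to GTZ13), and your expression for $C_1$ (with the $\tilde f^{-1}$ inside the integral) is actually the correct one; the paper's displayed definition of $C_1$ appears to drop $\tilde f^{-1}$, an evident typo.
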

\begin{proof}
The proof is almost identical to \cite[Lemma 5.2]{GTZ13} except one minor modification due to the fact that the involved equation is different from \cite{GTZ13}. For convenience, we give some details here. Firstly, by using properties of the K\"ahler-Ricci flow collected in Proposition \ref{facts}, Proposition \ref{observation2} and the same arguments in \cite[Lemma 5.2]{GTZ13}, we can find a positive function $\epsilon(t)$, which converges to zero as $t\to\infty$, such that
$$\tilde f^{-1}(B_{\omega_{\tilde\Sigma}}(h^{-1}(z),r-\epsilon(t_k)))\subset B_{\omega(t_k)}(\tilde x_k,r)\subset \tilde f^{-1}(B_{\omega_{\tilde\Sigma}}(h^{-1}(z),r+\epsilon(t_k))),$$
where $\tilde x_k\in\tilde X$ is some fixed sequence converging to $z$ under Gromov-Hausdorff convergence $(\tilde X,\omega(t_k))\to(Z,d_Z)$. Now using the equation \eqref{conti.eq} and the fact that $e^{\dot\varphi(t)+\varphi(t)}\to e^{f^*\tilde\psi}$ in $L^{1}(\tilde X)$-topology by Lemma \ref{L1conv}, we have
\begin{align}
\underline{V}_k(\tilde x_k,r)&=\frac{Vol_{\omega(t_k)}(B_{\omega(t_k)}(\tilde x_k,r))}{Vol_{\omega(t_k)}(B_{\omega(t_k)}(\underline{\tilde p_k},1))}\nonumber\\
&=\frac{\int_{B_{\omega(t_k)}(\tilde x_k,r)}e^{-(n+m-2)t}e^{\dot\varphi(t_k)+\varphi(t_k)}\tilde\Omega}{\int_{B_{\omega(t_k)}(\underline{\tilde p_k},1)}e^{-(n+m-2)t}e^{\dot\varphi(t_k)+\varphi(t_k)}\tilde\Omega}\nonumber\\
&=\frac{\int_{B_{\omega(t_k)}(\tilde x_k,r)}e^{\dot\varphi_k+\varphi(t_k)}\tilde\Omega}{\int_{B_{\omega(t_k)}(\underline{\tilde p_k},1)}e^{\dot\varphi(t_k)+\varphi(t_k)}\tilde\Omega}\nonumber\\
&\to\frac{\int_{\tilde f^{-1}(B_{\omega_{\tilde\Sigma}}(h^{-1}(z),r))}e^{\tilde\psi}\tilde\Omega}{\int_{\tilde f^{-1}(B_{\omega_{\tilde\Sigma}}(h^{-1}(\underline{z}),1))}e^{\tilde\psi}\tilde\Omega}\nonumber.
\end{align}
Therefore, if we set $C_1=\left(\int_{B_{\omega_{\tilde\Sigma}}(h^{-1}(\underline{z}),1)}e^{\tilde\psi}\tilde\Omega\right)^{-1}$, then the desired result follows.
\end{proof}

\begin{rem}
To carry out the analog of Lemma \ref{meas} for the continuity method \eqref{LT0}, one only needs to replace Lemma \ref{L1conv} by Lemma \ref{L1conv.1}.
\end{rem}

A direct consequence of Lemma \ref{meas} and the equation \eqref{GKE1} for $\omega_{\tilde\Sigma}$ is the following
\begin{lem}\label{meas1}
There exists a positive constant $C_2$ such that, for any $K\subset\tilde\Sigma_{reg}$,
$$\nu(K)=C_2\int_{K}\omega_{\tilde\Sigma}^2.$$
\end{lem}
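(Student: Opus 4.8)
The plan is to derive this statement as a direct corollary of Lemma \ref{meas} together with the Monge--Amp\`ere equation \eqref{GKE1} that defines $\omega_{\tilde\Sigma}$, using a standard differentiation (Lebesgue-point) argument to pass from ball-averages to a genuine measure identity. First I would observe that on the regular part $\tilde\Sigma_{reg}$ the current $\omega_{\tilde\Sigma}$ is a smooth K\"ahler metric, so $\omega_{\tilde\Sigma}^2$ is a smooth positive volume form there; moreover the homeomorphism $h:(\tilde\Sigma_{reg},\tilde d)\to(Z_0,d_Z)$ is a local isometry, so under $h$ the Riemannian volume of $\omega_{\tilde\Sigma}$ on a small geodesic ball in $\tilde\Sigma_{reg}$ matches the Riemannian $d_Z$-geometry on the corresponding ball in $Z_0$.

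Next I would rewrite the right-hand side of Lemma \ref{meas}. Equation \eqref{GKE1} reads $(\chi+\chi'+\sqrt{-1}\partial\bar\partial\tilde\psi)^2=\binom{n+m}{2}^{-1}e^{\tilde\psi}\tilde f_*(\Omega\wedge\Omega')$, i.e. $\omega_{\tilde\Sigma}^2=\binom{n+m}{2}^{-1}e^{\tilde\psi}\tilde f_*\tilde\Omega$ on $\tilde\Sigma_{reg}$ (writing $\tilde\Omega=\Omega\wedge\Omega'$). By the projection formula (Fubini along the fibers of $\tilde f$, which are compact of the right dimension), for any Borel set $U\subset\tilde\Sigma_{reg}$ one has $\int_{\tilde f^{-1}(U)}e^{\tilde\psi}\tilde\Omega=\int_U e^{\tilde\psi}\,\tilde f_*\tilde\Omega=\binom{n+m}{2}\int_U\omega_{\tilde\Sigma}^2$; here $\tilde\psi=\psi+\psi'+c_{n,m}$ is constant along the fibers, so it pulls out of the fiber integral cleanly. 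Substituting into Lemma \ref{meas} gives, for $z\in Z_0$ and all small $r$ with $B_{\omega_{\tilde\Sigma}}(h^{-1}(z),2r)$ geodesically convex in $\tilde\Sigma_{reg}$,
\[
\nu(B_{d_Z}(z,r))=\underline V_\infty(z,r)=C_1\binom{n+m}{2}\int_{B_{\omega_{\tilde\Sigma}}(h^{-1}(z),r)}\omega_{\tilde\Sigma}^2 .
\]
Thus on $Z_0$ the Radon measure $\nu$ and the pushforward $C_2\,h_*(\omega_{\tilde\Sigma}^2)$ (with $C_2:=C_1\binom{n+m}{2}$) assign the same mass to every sufficiently small metric ball centered at a point of $Z_0$. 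Since such small balls form a Vitali-type covering family and both measures are Radon, a standard differentiation-of-measures argument (or simply: two Radon measures agreeing on a neighborhood basis of balls agree on the generated $\sigma$-algebra) forces $\nu=C_2\,h_*(\omega_{\tilde\Sigma}^2)$ as measures on $Z_0$; transporting back through the homeomorphism $h$, this says precisely $\nu(K)=C_2\int_K\omega_{\tilde\Sigma}^2$ for every compact $K\subset\tilde\Sigma_{reg}$, since any such $K$ is covered by countably many of the good small balls and we can invoke additivity and inner regularity.

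The only mildly delicate point—the "main obstacle," though it is routine—is justifying the passage from the equality of ball-masses to the equality of measures: one must check that every compact $K\subset\tilde\Sigma_{reg}$ can be exhausted by (countable unions/differences of) the admissible balls $B_{\omega_{\tilde\Sigma}}(h^{-1}(z),r)$ with $z\in Z_0$ and $B_{\omega_{\tilde\Sigma}}(h^{-1}(z),2r)\subset\tilde\Sigma_{reg}$ geodesically convex. This is clear since $\tilde\Sigma_{reg}$ is a smooth Riemannian manifold (so geodesically convex balls of sufficiently small radius exist around every point) and $h$ maps $\tilde\Sigma_{reg}$ isometrically onto the open set $Z_0$; after the harmless rescaling already invoked before Lemma \ref{meas}, the radius constraint $r\le1$ is no loss. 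Everything else is bookkeeping with the projection formula and the constants $C_1,\binom{n+m}{2}$.
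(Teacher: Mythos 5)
Your proposal is correct and takes essentially the same approach the paper indicates: the paper presents Lemma \ref{meas1} only as ``a direct consequence of Lemma \ref{meas} and the equation \eqref{GKE1}'' with no further details. Your write-up simply makes explicit the routine steps the authors leave implicit, namely substituting $e^{\tilde\psi}\tilde f_*\tilde\Omega=\binom{n+m}{2}\omega_{\tilde\Sigma}^2$ from \eqref{GKE1} via the projection formula into Lemma \ref{meas}, and then passing from the family of small admissible balls to arbitrary subsets of $\tilde\Sigma_{reg}$ by a standard Vitali/differentiation-of-measures argument.
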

Note that it follows from the above proof that $\nu(Z)=\underline{V}_\infty(z,D)=C_1\int_{\tilde X}e^{\tilde\psi}\tilde\Omega>0$, where $D$ is a diameter bound of $(Z,d_Z)$. Then we can apply the same arguments in \cite[Theorem 1.2]{GTZ13} to conclude
\begin{lem}\label{meas2}
$\nu(Z\setminus Z_0)=0$.
\end{lem}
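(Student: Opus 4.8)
The plan is to follow the strategy of \cite[Theorem 1.2]{GTZ13}, showing that the renormalized limiting measure $\nu$ gives no mass to the ``singular locus'' $Z\setminus Z_0$, which is where the Gromov-Hausdorff limit departs from the model $(\tilde\Sigma_{reg},\omega_{\tilde\Sigma})$. The key point is that $Z\setminus Z_0$ lies over the exceptional set $\tilde S=(S\times\Sigma')\cup(\Sigma\times S')$ on the base $\tilde\Sigma$, which is a proper analytic subset (a union of fibers) and in particular has real codimension at least $2$ in $\tilde\Sigma$. By Lemma \ref{meas1} we already know that on the good part $\nu$ agrees, up to a constant $C_2$, with the measure $\omega_{\tilde\Sigma}^2$ pushed forward from $\tilde\Sigma_{reg}$; and by the remark following Lemma \ref{meas} we know $\nu(Z)=C_1\int_{\tilde X}e^{\tilde\psi}\tilde\Omega$ is finite and equals $C_2\int_{\tilde\Sigma}\omega_{\tilde\Sigma}^2$ since $\omega_{\tilde\Sigma}^2$ has finite total mass. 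Hence it suffices to show $\nu$ assigns the full mass $C_2\int_{\tilde\Sigma}\omega_{\tilde\Sigma}^2$ to $Z_0$, equivalently that $\nu(Z\setminus Z_0)=0$.

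First I would cover the exceptional base set $\tilde S$ by finitely many small coordinate polydiscs and, using the explicit asymptotics of $\omega_{\tilde\Sigma}$ near $\tilde S$ recorded in Subsection \ref{subs3.1'} (which follow from Theorem \ref{theorem1}), observe that $\int_{\tilde\Sigma}\omega_{\tilde\Sigma}^2<\infty$, so for any $\eta>0$ one can choose an open neighborhood $\mathcal{U}$ of $\tilde S$ in $\tilde\Sigma$ with $\int_{\mathcal{U}\cap\tilde\Sigma_{reg}}\omega_{\tilde\Sigma}^2<\eta$. Then $\tilde\Sigma\setminus\mathcal{U}$ is a compact subset of $\tilde\Sigma_{reg}$, so its image $h(\tilde\Sigma\setminus\mathcal{U})$ is a compact subset of $Z_0$. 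Next I would show that the complement $Z\setminus h(\tilde\Sigma\setminus\mathcal{U})$ can be covered by balls of small radius whose $\underline V_\infty$-volumes sum to something controlled by $\eta$ (plus an error going to $0$ with the radii): for balls centered at points of $Z_0$ near the boundary this is Lemma \ref{meas} applied to geodesically convex balls landing in $\tilde f^{-1}(\mathcal{U})$, and their total $\nu$-mass is bounded by $C_1\int_{\tilde f^{-1}(\mathcal{U})}e^{\tilde\psi}\tilde\Omega=C_2\int_{\mathcal{U}\cap\tilde\Sigma_{reg}}\omega_{\tilde\Sigma}^2<C_2\eta$; for balls meeting $Z\setminus Z_0$ one uses the volume comparison inequality for $\nu$ together with the fact that the total mass of $\nu$ is finite, exactly as in \cite[Theorem 1.2]{GTZ13}, to see their contribution is absorbed. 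Taking the infimum over covers in the definition of $\nu$ and then letting $\eta\to0$ forces $\nu(Z\setminus Z_0)=0$.

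The main obstacle I expect is the Vitali-type covering argument controlling the measure of a neighborhood of $Z\setminus Z_0$: one must produce, for each scale $\delta$, an efficient cover of $Z\setminus h(\tilde\Sigma\setminus\mathcal{U})$ by balls of radius $<\delta$ and bound $\sum_i\underline V_\infty(z_i,r_i)$ using only the local volume-doubling/comparison properties of $\nu$ (valid since $(\tilde X,\omega(t_k))$ has a uniform Ricci lower bound) and the explicit identification of $\nu$ on $Z_0$ from Lemma \ref{meas}. This is precisely the delicate step in \cite[Theorem 1.2]{GTZ13}, and here it goes through verbatim once one knows $\tilde S$ has measure zero for $\omega_{\tilde\Sigma}^2$ and that $\nu$ restricted to $Z_0$ is absolutely continuous with the density given by Lemma \ref{meas1}; no new ideas beyond \cite{GTZ13,CC1} are needed, only the bookkeeping of the product structure of $\tilde\Sigma$ and $\tilde S$.
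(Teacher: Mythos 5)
You have gathered the right ingredients --- the identification of $\nu$ with $C_2\,\omega_{\tilde\Sigma}^2$ on the regular part (Lemma \ref{meas1}) and the total-mass identity $\nu(Z)=\underline{V}_\infty(\underline z,D)=C_1\int_{\tilde X}e^{\tilde\psi}\tilde\Omega$ noted just before the lemma --- but the covering argument you then set up is both unnecessary and, at precisely the step you yourself flag as the ``main obstacle,'' circular. To make the cover of $Z\setminus h(\tilde\Sigma\setminus\mathcal{U})$ efficient you would need an \emph{upper} bound on $\sum_i\underline{V}_\infty(z_i,r_i)$ for balls meeting $Z\setminus Z_0$; but the Bishop--Gromov comparison recorded in the paper gives a \emph{lower} bound for $\nu$-ball ratios, not an upper bound, and Lemma \ref{meas} cannot be applied near $Z\setminus Z_0$ because the hypothesis that $B_{\omega_{\tilde\Sigma}}(h^{-1}(z),2r)$ be contained in $\tilde\Sigma_{reg}$ and geodesically convex fails there. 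Asserting that the ``contribution is absorbed'' is exactly what the lemma claims; nothing you invoke supplies it.

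The route the paper's citation to \cite[Theorem 1.2]{GTZ13} actually points at is shorter and avoids covers altogether. Since $\nu$ is a Radon measure on the compact metric space $Z$ and $Z_0$ is open, inner regularity together with Lemma \ref{meas1} yields $\nu(Z_0)=C_2\int_{\tilde\Sigma_{reg}}\omega_{\tilde\Sigma}^2$. By \eqref{GKE1} one has $\omega_{\tilde\Sigma}^2=\binom{n+m}{2}^{-1}e^{\tilde\psi}\tilde f_*\tilde\Omega$ on $\tilde\Sigma_{reg}$, and $\tilde S$ is a proper analytic subset, hence a null set for $\tilde f_*\tilde\Omega$; since the constants in Lemmas \ref{meas} and \ref{meas1} are related by $C_2=C_1\binom{n+m}{2}$, this gives $\nu(Z_0)=C_1\int_{\tilde X}e^{\tilde\psi}\tilde\Omega=\nu(Z)$. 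Additivity of the measure then forces $\nu(Z\setminus Z_0)=\nu(Z)-\nu(Z_0)=0$ directly. So your sketch has a genuine gap at the covering step, but it is easily repaired by dropping that step in favor of this mass-matching argument, which is what the paper (following \cite{GTZ13}) intends.
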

Now using Lemmas \ref{meas1}, \ref{meas2} and the asymptotics of $\omega_{\tilde\Sigma}$ (see subsection \ref{subs3.1'}), the same arguments in \cite[Section 2]{ToZy17} (also see \cite[Section 3]{GTZ}) give
\begin{lem}\label{meas3}
\begin{itemize}
\item[(1)]The Hausdorff dimension $dim_{\mathcal{H}}(Y\setminus Z_0)\le2$;
\item[(2)]$\nu=C_2\mathcal{H}_{d_Z}^{4}$ on $Z$ as measures;
\item[(3)]$\nu_{-1}(Z\setminus Z_0)=0$, where $\nu_{-1}$, the measure in codimension one, is defined by, for any subset $K\subset Z$,
$$\nu_{-1}(K)=\lim_{\delta\to0}\inf\left\{\sum_{i}r_i^{-1}\nu(B_{d_Z}(z_i,r_i))|K\subset\cup_{i}B_{d_Z}(z_i,r_i), r_i<\delta\right\}.$$
\end{itemize}
\end{lem}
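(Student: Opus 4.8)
The plan is to derive all three items from the explicit local description of $\omega_{\tilde\Sigma}$ near $\tilde S$ obtained in subsection \ref{subs3.1'} (which is Theorem \ref{theorem1} applied to each of the two factors, together with \eqref{GKE2}), combined with Lemmas \ref{meas1} and \ref{meas2}, following the template of \cite[Section 3]{GTZ} and \cite[Section 2]{ToZy17}.

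The first, and main, step is to pin down a bi-Lipschitz local model for $(Z,d_Z)$ near the points of $Z\setminus Z_0$. Since $h$ is a local isometry from $(\tilde\Sigma_{reg},\omega_{\tilde\Sigma})$ onto $(Z_0,d_Z)$ and the fibres of $\tilde f$ collapse uniformly over compact subsets of $\tilde\Sigma_{reg}$ (Proposition \ref{facts}(3) and Proposition \ref{observation2}), every point of $Z\setminus Z_0$ is a limit of $h$ of points approaching $\tilde S=(S\times\Sigma')\cup(\Sigma\times S')$, so it suffices to understand the metric completion of $(\tilde\Sigma_{reg},\omega_{\tilde\Sigma})$ in a neighbourhood of $\tilde S$. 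By \eqref{GKE2} and Theorem \ref{theorem1}, near such a neighbourhood $\omega_{\tilde\Sigma}$ is uniformly comparable to a product metric, one factor of which (near $(\Sigma\setminus S)\times S'$), or both factors of which (near $S\times S'$), has the form $g_\beta:=|w|^{-2(1-\beta)}(-\log|w|)^{N-1}|dw|^2$ on a punctured $2$-disc, with $\beta\in\mathbb{Q}_{>0}$ and $N\in\{1,\dots,n\}$. I would then record the elementary facts about $(\Delta^*,g_\beta)$: since $\beta>0$ the radial integral $\int_0^1\rho^{\beta-1}(-\log\rho)^{(N-1)/2}\,d\rho$ converges, so the metric completion adds only the puncture $0$, the $g_\beta$-distance $R(w)$ from $w$ to $0$ satisfies $R(w)\sim|w|^\beta(-\log|w|)^{(N-1)/2}$, and $\mathrm{Area}_{g_\beta}(\{R\le r\})\le Cr^2$. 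Inserting this into the product models shows that, locally, the part of $Z$ missing from $Z_0$ lies inside $\{\text{a point}\}\times(2\text{-ball})$ or a finite union of such sets; covering $\tilde S$ by finitely many charts then gives item (1), namely $dim_{\mathcal{H}}(Z\setminus Z_0)\le2$, and in particular $\mathcal{H}_{d_Z}^4(Z\setminus Z_0)=0$.

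Item (2) follows at once: on the smooth $4$-dimensional Riemannian manifold $Z_0$ the Hausdorff measure $\mathcal{H}_{d_Z}^4$ equals a fixed multiple of the Riemannian volume form $h^*\omega_{\tilde\Sigma}^2$, while Lemma \ref{meas1} gives $\nu=C_2\,h^*\omega_{\tilde\Sigma}^2$ there; hence $\nu$ and $\mathcal{H}_{d_Z}^4$ agree up to a constant on $Z_0$, and since both vanish on $Z\setminus Z_0$ (by Lemma \ref{meas2} and item (1)), they agree, after adjusting $C_2$, on all of $Z$. For item (3) I would first use Lemma \ref{meas1} and the local models of the previous step to upgrade to a uniform bound $\nu(B_{d_Z}(z,r))\le Cr^4$ valid for every $z\in Z$ and all $r$ below a fixed scale --- the cone-type ends carry no extra volume, as one sees by integrating $\omega_{\tilde\Sigma}^2$ (a product of $g_\beta$'s) over $\tilde f^{-1}$ of a metric ball. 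Granting this, fix $\epsilon>0$; since $dim_{\mathcal{H}}(Z\setminus Z_0)\le2<3$, for every $\delta>0$ we may cover $Z\setminus Z_0$ by balls $B_{d_Z}(z_i,r_i)$ with $r_i<\delta$ and $\sum_i r_i^3<\epsilon$, and then $\sum_i r_i^{-1}\nu(B_{d_Z}(z_i,r_i))\le C\sum_i r_i^3<C\epsilon$; letting first $\delta\to0$ and then $\epsilon\to0$ yields $\nu_{-1}(Z\setminus Z_0)=0$.

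The hard part is the first step --- showing rigorously that $(Z,d_Z)$ near its singular points is genuinely bi-Lipschitz to the stated products of two-dimensional cones. This means transporting the $\omega_{\tilde\Sigma}$-asymptotics of Theorem \ref{theorem1} through the local isometry $h$ and the uniform fibre-collapsing estimates so as to control $d_Z$ itself, i.e. to show that $Z$-distances between points near $\tilde S$ are comparable, uniformly up to the singular locus, to distances computed in the model metric, and that no further pieces of $Z\setminus Z_0$ appear. Once the local models are in place, items (1)--(3) are short measure-theoretic consequences, exactly as in \cite[Section 3]{GTZ} and \cite[Section 2]{ToZy17}, the logarithmic factors being harmless because $\beta>0$.
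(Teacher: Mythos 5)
Your proposal is correct in structure and essentially reproduces the argument that the paper delegates to \cite[Section 2]{ToZy17} and \cite[Section 3]{GTZ}: the local comparability of $\omega_{\tilde\Sigma}$ near $\tilde S$ with products of $g_\beta$-type cone metrics gives $\dim_{\mathcal H}(\tilde S,\tilde d_\infty)\le 2$, a uniform volume upper bound $\nu(B_{d_Z}(z,r))\le Cr^4$ then yields items (2) and (3), and Lemmas \ref{meas1}, \ref{meas2} tie $\nu$ to $\mathcal H^4$.

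One remark on emphasis: the full bi-Lipschitz local model for $(Z,d_Z)$ that you flag as ``the hard part'' is actually more than is needed. For item (1) the one-sided inequality $d_Z\circ(h\times h)\le\tilde d_\infty$ suffices --- it holds by the curve-lifting argument giving \eqref{isom1} (proved from Lemma~\ref{lemonbase} alone, independently of this lemma), it makes the completion map $\Phi:(\tilde\Sigma,\tilde d_\infty)\to(Z,d_Z)$ a $1$-Lipschitz surjection with $Z\setminus Z_0\subset\Phi(\tilde S)$, and Lipschitz maps do not increase Hausdorff dimension. For the uniform bound $\nu(B_{d_Z}(z,r))\le Cr^4$ it is cleaner to bypass the cone models altogether and use Proposition~\ref{facts}(1)--(2) directly: $\omega(t_k)\ge C^{-1}\tilde f^*\tilde\chi$ forces $B_{\omega(t_k)}(\tilde x_k,r)\subset\tilde f^{-1}(B_{\tilde\chi}(\tilde f(\tilde x_k),Cr))$, and the normalized volume of this preimage, computed from $\omega(t_k)^{n+m}\sim e^{-(n+m-2)t_k}\tilde\Omega$, is $O(r^4)$ because $\tilde\chi$ is a smooth metric on the compact complex surface $\tilde\Sigma$. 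With those two points the cone asymptotics of Section~\ref{subs3.1'} are needed only to confirm $\dim_{\mathcal H}(\tilde S,\tilde d_\infty)\le 2$, which is exactly the computation you record, and items (1)--(3) follow by the measure-theoretic steps you describe.
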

Finally, we are able to complete the proof of Theorem \ref{result}.
\begin{proof}[Proof of Theorem \ref{result}]
Firstly, the arguments in Lemma \ref{lemonbase} imply that, for any $\tilde s_1,\tilde s_2\in\tilde\Sigma_{reg}$ and $\epsilon>0$, there exists a piecewise smooth curve $\tilde\gamma_\epsilon\subset\tilde\Sigma_{reg}$ connecting $\tilde s_1,\tilde s_2$ with
$$L_{\tilde d}(\tilde\gamma_\epsilon)=L_{\omega_{\tilde\Sigma}}(\tilde\gamma_\epsilon)\le \tilde d(\tilde s_1,\tilde s_2)+\epsilon.$$
Therefore,
\begin{align}
d_Z(h(\tilde s_1),h(\tilde s_2))&\le L_{d_Z}(h(\tilde\gamma_\epsilon))\nonumber\\
&=L_{\omega_{\tilde\Sigma}}(\tilde\gamma_\epsilon)\nonumber\\
&\le \tilde d_\infty(\tilde s_1,\tilde s_2)+\epsilon\nonumber.
\end{align}
Hence, since $\epsilon$ is arbitrary, we have
\begin{equation}\label{isom1}
d_Z(h(\tilde s_1),h(\tilde s_2))\le\tilde d_\infty(\tilde s_1,\tilde s_2).
\end{equation}
On the other hand, for any $z_1=h(\tilde s_1),z_2=h(\tilde s_2)\in Z_0$ and $\epsilon>0$, we choose $\epsilon$ small enough such that $B_{d_Z}(z_2,\epsilon)\subset Z_0$ and is geodesiccally convex. By using item (3) in Lemma \ref{meas3}, we conclude from \cite[Theorem 3.7]{CC2} that there exists a $z_3\in B_{d_Z}(z_2,\epsilon)$ and a minimal geodesic $\tau_\epsilon\subset Z_0$ connecting $z_1$ and $z_3$. We can also connecting $z_2$ and $z_3$ by a minimal geodesic $\tau_\epsilon'\subset B_{d_Z}(z_2,\epsilon)\subset Z_0$. Hence, by connecting $\tau_\epsilon$ and $\tau_\epsilon'$ we obtain a curve $\tau_\epsilon''$ connecting $z_1,z_2$ with
\begin{align}
L_{d_Z}(\tau_\epsilon'')&\le L_{d_Z}(\tau_\epsilon)+\epsilon\nonumber\\
&\le d_Z(z_1,z_3)+\epsilon\nonumber\\
&\le d_Z(z_1,z_2)+2\epsilon\nonumber.
\end{align}
Therefore,
\begin{align}
\tilde d_\infty(\tilde s_1,\tilde s_2)&\le L_{\omega_{\tilde\Sigma}}(h^{-1}(\tau_\epsilon''))\nonumber\\
&=L_{d_Z}(\tau_\epsilon'')\nonumber\\
&\le d_Z(z_1,z_2)+2\epsilon\nonumber\\
&= d_Z(h(\tilde s_1),h(\tilde s_2))+2\epsilon\nonumber.
\end{align}
Letting $\epsilon\to0$ gives
\begin{equation}\label{isom2}
\tilde d_\infty(\tilde s_1,\tilde s_2)\le d_Z(h(\tilde s_1),h(\tilde s_2)).
\end{equation}
From \eqref{isom1} and \eqref{isom2} we see that $h:(\tilde\Sigma_{reg},\tilde d)\to (Z_0,d_Z)$ is an isometry. But $Z_0$ is dense in $Z$ and $\tilde\Sigma_{reg}$ is dense in $\tilde \Sigma$, we can extend $h$ to an isometry $\tilde h:(\tilde\Sigma,\tilde d_\infty)\to(Z,d_Z)$. Thus, we have proved that every Gromov-Hausdorff limit of $(\tilde X,\omega(t))$ is isometric to $(\tilde\Sigma,\tilde d_\infty)$ and hence $(\tilde X,\omega(t))\to(\tilde\Sigma,\tilde d_\infty)$ in Gromov-Hausdorff topology without passing to a subsequence.
\par Theorem \ref{result} is proved.
\end{proof}

We end this paper by a remark.
\begin{rem}\label{ending}
Theorem \ref{result} (assuming Ricci curvature lower bound) and  Theorem \ref{result.1} hold in some similar settings. Let $f':X'\to B$ be a holomorphic submersion with connected fibers between two compact connected K\"ahler manifolds with $dim(Y)> dim(B)\ge1$ and $f'^*\chi_B\in2\pi c_1(K_{X'})$ for some K\"ahler metric $\chi_{B}$ on $B$. Set
$$\tilde f:=(f,f'):X\times X'\to \Sigma\times B.$$
Then Theorems \ref{result} holds in this setting. Moreover, given a holomorphic surjective map $\tilde f:X_1\times X_2\times\ldots\times X_q\to B_1\times B_2\times\ldots\times B_q$, which is a product of $q$ holomorphic surjective maps $f_i:X_i\to B_i$ with connected fibers and $f_i^*\chi_i\in2\pi c_1(K_{X_i})$ for some K\"ahler metric $\chi_i$ on $B_i$, assume all the involved spaces are compact connected K\"ahler manifolds and $f_i:X_i\to B_i$ is either a holomorphic submersion or $dim(B_i)=1$. Then Theorem \ref{result} and \ref{result.1} hold on $\tilde f:X_1\times X_2\times\ldots\times X_q\to B_1\times B_2\times\ldots\times B_q$.
\end{rem}

\subsection*{Acknowledgements}
The author is grateful to Professor Huai-Dong Cao for constant encouragement and support and Professor Valentino Tosatti for his crucial help during this work, valuable suggestions on a previous draft and constant encouragement. He also thanks Professor Hans-Joachim Hein for communications which motivate Remark \ref{remk1.2}, Professor Chengjie Yu for useful comments on a previous draft, Professor Zhenlei Zhang for collaboration and encouragement and Peng Zhou for kind help. This work was carried out while the author was visiting Department of Mathematics at Northwestern University, which he would like to thank for the hospitality.
\par The author is grateful to the referee and editor for their careful reading and very useful suggestions and corrections, which help to improve this paper.
\par  Very recently, Fu, Guo and Song \cite{FGS} made a big progress on studying the geometry of the continuity method. They proved that the diameter of $\omega(t)$ solving from the continuity method \eqref{LT0} or \eqref{LT0.1} is uniformly bounded. Their result in particular gives an alternative proof for the diameter upper bound of the continuity method involved in proofs of Theorems \ref{result0.1}, \ref{result.1} and \ref{result0.2}.


\begin{thebibliography}{0}
\bibitem{Au} Aubin, T., \'{E}quations du type Monge-Amp\`ere sur les vari\'{e}t\'{e}s k\"ahleriennes compactes, C. R. Acad. Sci. Paris S\'{e}r. A-B 283 (1976), no. 3, Aiii, A119-A121
\bibitem{BHPV} Barth, W., Hulek, K., Peters, C. and Van de Ven, A., Compact complex surfaces, 2nd edition, Springer, Berlin (2004)
\bibitem{Be} Berman, R., K-polystability of $\mathbb{Q}$-Fano varieties admitting K\"ahler-Einstein metrics, Invent. Math., 203(3), 973-1025, 2016
\bibitem{BJ} Boucksom, S. and Jonsson, M., Tropical and non-archimedean limits of degenerating families of volume forms, Journal de l'\'{E}cole polytechnique-Math\'{e}matiques, 4(2017), 87-139
\bibitem{CC1} Cheeger, J. and Colding, T.H., On the structure of spaces with Ricci curvature bounded from below. I, J. Differential Geom. 46 (1997), 406-480
\bibitem{CC2} Cheeger, J. and Colding, T.H., On the structure of spaces with Ricci curvature bounded from below. II, J. Differential Geom. 52 (1999), 13-35
\bibitem{DP04} Demailly, J.-P., Paun, M., Numerical characterization of the K\"ahler cone of a compact K\"ahler manifold, Ann. of Math. (2) 159 (2004), no. 3, 1247-1274
\bibitem{EFM} Eriksson, D., Freixas i Montplet, G. and Mourougane, C., Singularities of metrics on Hodge bundles and their topological invariants, arXiv:1611.03017
\bibitem{EGZ} Eyssidieux, P., Guedj, V. and Zeriahi, A., Singular K\"ahler-Einstein metrics, J. Amer. Math. Sci. 22 (2009), no. 3, 607-639
\bibitem{FZ} Fong, F. T.-H. and Zhang, Z., The collapsing rate of the K\"{a}hler-Ricci flow with regular infinite time singularities, J. Reine Angew. Math. 703 (2015), 95-113
\bibitem{FGS} Fu, X., Guo, B. and Song, J., Geometric estimates for complex Monge-Amp\`{e}re equation, arXiv: 1706.01527
\bibitem{Gi} Gill, M., Collapsing of products along the K\"ahler-Ricci flow, Trans. Amer. Math. Soc. 366 (2014), no. 7, 3907-3924
\bibitem{GTZ13} Gross, M., Tosatti, V. and Zhang, Y.G., Collapsing of abelian fibred Calabi-Yau manifolds, Duke Math. J. 162 (2013), no. 3, 517-551
\bibitem{GTZ} Gross, M., Tosatti, V. and Zhang, Y.G., Gromov-Hausdorff collapsing of Calabi-Yau manifolds, Comm. Anal. Geom. 24(2016), no.1, 93-113
\bibitem{He} Hein, H.-J., Gravitational instantons from rational elliptic surfaces, J. Amer. Math. Soc. 25 (2012), no. 2, 355-393
\bibitem{Ko1} Ko{\l}odziej, S., The complex Monge-Amp\`{e}re equation, Acta Math. 180(1), 69-117 (1998)
\bibitem{LT} La Nave, G. and Tian, G., A continuity method to construct canonical metrics, Math. Ann. 365 (2016), 911-921
\bibitem{La} Lazarsfeld, J., Positivity in algebraic geometry. I, II, A Series of Modern Survays in Mathematics, 48. Springer-Verlag, Berlin, 2004
\bibitem{Pe} Peters, C., A criterion for flatness of Hodge bundles over curves and geometric applications, Math. Ann. 268, 1-19 (1984)
\bibitem{Ru} Rubinstein, Y., Some discretizations of geometric evolution equations and the Ricci iteration on the space of Kahler metrics, Adv. Math. 218 (2008), 1526-1565
\bibitem{Sc} Schmid, W., Variation of Hodge structure: the singularities of the period mapping, Invent. Math., 22:211-319, 1973
\bibitem{ST06} Song, J. and Tian, G., The K\"{a}hler-Ricci flow on surfaces of positive Kodaira dimension, Invent. Math., 170, 609-653 (2006)
\bibitem{ST12} Song, J. and Tian, G., Canonical measures and the K\"{a}hler-Ricci flow, J. Amer. Math. Soc. 25 (2012), no. 2, 303-353
\bibitem{ST4} Song, J. and Tian, G., Bounding scalar curvature for global solutions of the K\"ahler-Ricci flow, Amer. J. Math. 138 (2016) no. 3, 683-695,
\bibitem{ST17} Song, J. and Tian, G., The K\"{a}hler-Ricci flow through singularities, Invent. Math. 207 (2017), no. 2, 519-595
\bibitem{SW13} Song, J. and Weinkove, B., Introduction to the K\"ahler-Ricci flow, Chapter 3 of 'Introduction to the K\"ahler-Ricci flow', eds S. Boucksom, P. Eyssidieux, V. Guedj, Lecture Notes Math. 2086, Springer 2013
\bibitem{To10} Tosatti, V., Adiabatic limits of Ricci-flat K\"{a}hler metrics, J. Diff. Geom., 84 (2010), 427-453
\bibitem{TWY} Tosatti, V., Weinkove, B. and Yang, X., The K\"{a}hler-Ricci flow, Ricci-flat metrics and collapsing limits,  Amer. J. Math., arXiv:1408.0161 (to appear)
\bibitem{ToZy17} Tosatti, V. and Zhang, Y.G., Collapsing hyperk\"ahler manifolds, arXiv:1705.03299
\bibitem{Wa} Wang, C.-L., On the incompleteness of the Weil-Petersson metric along degenerations of Calabi-Yau manifolds, Math. Res. Lett. 4 (1997), no. 1, 157-171 (1997)
\bibitem{Y} Yau, S.-T., On the Ricci curvature of a compact K\"{a}hler manifold and the complex Monge-Amp\`{e}re equation, I, Comm. Pure Appl. Math. 31 (1978) 339-411
\bibitem{ZyZz1} Zhang, Y.S. and Zhang, Z.L., The continuity method on minimal elliptic K\"ahler surfaces, Int. Math. Res. Not., https://doi.org/10.1093/imrn/rnx209 (to appear)
\bibitem{ZyZz2} Zhang, Y.S. and Zhang, Z.L., The continuity method on Fano fibrations, arXiv:1612.01348
\end{thebibliography}
\end{document}